\documentclass[11pt,reqno]{amsart}
\usepackage[a4paper,margin=1in]{geometry}
\usepackage{amsmath,amssymb,amsthm,mathtools}
\usepackage[hidelinks]{hyperref}
\usepackage{enumitem}
\usepackage{needspace}

\newtheorem{theorem}{Theorem}[section]
\newtheorem{proposition}[theorem]{Proposition}
\newtheorem{lemma}[theorem]{Lemma}
\newtheorem{corollary}[theorem]{Corollary}

\theoremstyle{definition}
\theoremstyle{remark}

\newtheorem{conjecture}[theorem]{Conjecture}

\newcommand{\Spec}{\operatorname{Spec}}

\newcommand{\Vol}{\operatorname{Vol}}
\newcommand{\C}{\mathbb C}

\makeatletter
\newcommand{\subsectionnotoc}{\@startsection{subsection}{\@M}%
  \z@{.5\linespacing\@plus.7\linespacing}{-.5em}%
  {\normalfont\bfseries}}
\makeatother

\title{Some uniform estimates for K\"ahler--Ricci Shrinkers}
\author{Junsheng Zhang}
\address{Courant Institute of Mathematical Sciences\\
  New York University, 251 Mercer St\\
  New York, NY 10012\\}
\email{jz7561@nyu.edu}
\hypersetup{
  pdftitle={Uniform estimates for smooth K\"ahler--Ricci Shrinkers},
  pdfauthor={Junsheng Zhang}
}
\date{}
\begin{document}

\begin{abstract}
 In this paper, we establish several uniform estimates for K\"ahler--Ricci
shrinkers without imposing any curvature assumptions. In particular, we
prove:
\begin{enumerate}
    \item a uniform lower bound for the entropy;
    \item a uniform lower bound for the asymptotic volume ratio of
    K\"ahler--shrinkers with maximal volume growth;
    \item a uniform lower bound for the scalar curvature on balls centered at a
minimum point of the soliton potential for non-Gaussian shrinkers.
\end{enumerate}
\end{abstract}

\maketitle

\tableofcontents
\section*{Introduction}
By a K\"ahler--Ricci shrinker, we mean a complete shrinking gradient K\"ahler--Ricci soliton, which arises naturally as a model for finite-time singularities of the K\"ahler--Ricci flow. It is proved in \cite{SZ} that every K\"ahler--Ricci shrinker admits a polarized Fano fibration structure, which plays the role of a noncompact Fano manifold.  This
algebraic structure and the techniques behind its construction, opens the door to 
uniform estimates for K\"ahler--Ricci shrinkers and, ultimately, to their
moduli theory.

Throughout the paper, a smooth K\"ahler--Ricci shrinker is
normalized by
\begin{equation}\label{eq:canonical-shrinker-normalization}
 \operatorname{Ric}(\omega)+\sqrt{-1}\partial\bar\partial f=\omega,
 \qquad
 \Delta_\omega f+f-|\nabla^{1,0}f|^2=0.
\end{equation}We choose
\(p\in\operatorname*{argmin}f\), and put \(\rho^2=f+n\).
The holomorphic Killing vector field \(\xi=J\nabla f\) is called the
\emph{soliton vector field}. Following the notation of \cite{SZ}, we define
the \textit{weighted volume} by
\begin{equation}\label{eq:weighted-volume-definition}
    \mathbb W(\xi)
    :=\frac{1}{(2\pi)^n}
    \int_X e^{-f}\frac{\omega^n}{n!}
    =e^{\boldsymbol\mu(g)+n},
    \qquad
    \boldsymbol\mu(g)=\mu\bigl(g,\tfrac12\bigr),
\end{equation}
where $\mu$ denotes Perelman's $\mu$-entropy.

\begin{theorem}
\label{int:uniform-entropy}
Let \((X^n,\xi)\) be a smooth K\"ahler--Ricci
shrinker.  Then
\[
 \mathbb W(\xi)\geq\frac{1}{N_n^n n!},
\]
where \(
 N_n:=2^{n+1}(n+1)!\,n(n+1).\) Equivalently,
 $\boldsymbol\mu(g)\geq-n-n\log N_n-\log(n!).$
\end{theorem}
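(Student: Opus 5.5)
The plan is to turn the analytic quantity $\mathbb W(\xi)$ into an algebraic count, and then bound that count from below using a supply of holomorphic functions whose $\xi$-weights are uniformly bounded.

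\emph{Step 1: $\mathbb W(\xi)$ as a Hilbert-series limit.} Following \cite{SZ}, the ring $R$ of holomorphic functions on $X$ of polynomial growth in $\rho$ is finitely generated. It is graded by the weights of the torus generated by $\xi$: $R=\bigoplus_\alpha R_\alpha$ with $\alpha\ge 0$ and $\mathcal L_{-J\xi}u=\alpha u$. I would first show, via Duistermaat--Heckman localization or the equivariant index argument of \cite{SZ}, that
\[
 \mathbb W(\xi)=\lim_{t\to0^+} c_n\, t^n\sum_\alpha e^{-t\alpha}\dim R_\alpha .
\]
Here $c_n$ is a normalization constant, to be fixed by testing on the Gaussian shrinker $\C^n$. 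I expect this normalization to account for the factor $1/n!$ in the statement. If the identity needs a twist by a section of the relevant $K_X^{-1}$-type line bundle, I would work with sections of $-kK_X$ instead; the shape of the argument is unchanged.

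\emph{Step 2: reduce to weight bounds.} Suppose $R$ contains $n$ algebraically independent homogeneous elements $u_1,\dots,u_n$ of weights $w_i\le N_n$. Then $R\supseteq\C[u_1,\dots,u_n]$, so
\[
 \sum_\alpha e^{-t\alpha}\dim R_\alpha\ \ge\ \prod_i (1-e^{-tw_i})^{-1}.
\]
Multiplying by $t^n$ and letting $t\to0$ gives a limit of at least $\prod_i w_i^{-1}\ge N_n^{-n}$. Combined with Step 1 this yields $\mathbb W(\xi)\ge 1/(N_n^n n!)$. The $\mu$-entropy form of the statement then follows from \eqref{eq:weighted-volume-definition} by taking logarithms.

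\emph{Step 3: construct the functions. This is the main obstacle.} I would use Hörmander $L^2$ estimates with the weight $e^{-f}$ (or $e^{-kf}$), exploiting $\operatorname{Ric}+\sqrt{-1}\partial\bar\partial f=\omega>0$. Take local holomorphic coordinates at the minimum point $p$, cut them off, and correct by solving $\bar\partial$. This produces holomorphic functions with prescribed $1$-jets at $p$, hence $n$ functions that are independent at $p$.

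Next, decompose these functions into $\xi$-eigencomponents. I must show that some low-weight components still carry the independent jets. The weight bound should come from controlling the $L^2(e^{-f})$ norms: for a weight-$\alpha$ function the weighted norm behaves like $\int \rho^{2\alpha}e^{-\rho^2}$. Comparing this with the solution of the $\bar\partial$ problem bounds $\alpha$ by a combinatorial constant. I expect the factors $(n+1)!$ and $2^{n+1}$ to arise from a Taylor/jet-order count, namely how many weight levels are needed before the $1$-jet span is exhausted. The factor $n(n+1)$ should come from $\Delta f=\tfrac{n}{\cdot}$-type identities at $p$, for instance $f(p)\le n$ from $\Delta f+f=|\nabla f|^2$ at the minimum.

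The delicate part is making every constant independent of the geometry. This requires no curvature or diameter control, only the soliton equations, and the uniform control of $f$ near $p$ (for example $\rho^2\ge$ something comparable to $d(p,\cdot)^2/4$) will be needed to localize the cutoff. I would first try to get the weight bound via a pigeonhole argument on the spectral decomposition of $\mathcal L_{J\xi}$ acting on the finite-dimensional space of $L^2$-holomorphic functions of bounded $\rho$-degree.
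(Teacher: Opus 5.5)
Your proposal has a genuine gap at its foundation. You work with the ring $R$ of polynomial-growth holomorphic functions, but $R=\mathcal O(Y)$ is the coordinate ring of the affine base of the Fano fibration. Its transcendence degree is therefore $\dim_{\mathbb C}Y$, which can be any integer from $0$ to $n$.
\begin{itemize}
\item For a compact shrinker $R=\mathbb C$, so the right side of your Step~1 identity is $\lim_{t\to0}c_nt^n=0$.
\item Whenever $\dim_{\mathbb C}Y<n$, the $n$ algebraically independent elements required in Step~2 do not exist.
\item Even when $\dim_{\mathbb C}Y=n$, there is no identity with a universal $c_n$. The Gaussian forces $c_n=e^n$, not $1/n!$. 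For the Feldman--Ilmanen--Knopf shrinker on $\operatorname{Bl}_0\mathbb C^2$, however, one computes that $\mathbb W(\xi)$ divided by your Hilbert-series limit equals $e^{\sqrt2}(1+\sqrt2)\neq e^2$. The reason is that $\mathbb W(\xi)$ sees the anticanonical geometry of the exceptional locus through the character of $H^0(X,-kK_X)$ with the canonical lift, \eqref{eq:character-formula}.
\item Step~3 aims at something false for every non-Gaussian shrinker. Suppose $n$ holomorphic functions have independent differentials at the fixed point $p$. Then $\pi$ is injective near $p$, so the connected compact central fiber $\pi^{-1}(\pi(p))$ is $\{p\}$. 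As in the proof of Theorem~\ref{thm:cone-isolation}, this forces $X\simeq Y\simeq\mathbb C^n$.
\end{itemize}
So your remark that switching to sections of $-kK_X$ leaves ``the shape of the argument unchanged'' is exactly where the missing ideas are hidden.

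The paper supplies those ideas, staying throughout with the character formula \eqref{eq:character-formula} for $H^0(X,-kK_X)$.
\begin{itemize}
\item The constant $N_n$ comes from Fujino's uniform relative very ampleness of $-N_nK_X$, not from a jet count or a $\Delta f$ identity. At a minimum $q$ of $f$, this gives eigensections $s_0,\dots,s_n$ of $-N_nK_X$ with $s_0(q)\neq0$ and $z_j=s_j/s_0$ local coordinates at $q$.
\item The canonical lift then does all the weight bookkeeping, with no analysis: $\operatorname{wt}_\xi(s_0)=N_nf(q)$, and the coordinate weights satisfy $\lambda_j\geq0$ and $\sum_j\lambda_j=-f(q)\geq0$.
\item Hence every monomial $s_0^{m-|I|}s_1^{I_1}\cdots s_n^{I_n}$ with $|I|\leq m$ has normalized weight at most $(1-N_n^{-1})f(q)\leq0$. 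Each such monomial therefore contributes at least $1$ to the character sum.
\item Counting these $\binom{m+n}{n}$ linearly independent sections gives $\mathbb W(\xi)\geq\lim_m\binom{m+n}{n}(N_nm)^{-n}=1/(N_n^n n!)$. The $1/n!$ comes from this count.
\end{itemize}
Your proposed analytic route to a geometry-independent weight bound, H\"ormander estimates plus localization near $p$, also has a problem. It would need uniform local control such as volume non-collapsing near $p$ for mean-value estimates. For shrinkers that control is normally derived from an entropy lower bound, which is the statement being proved.
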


The uniform lower bound for the weighted volume in the compact case of
Theorem~\ref{int:uniform-entropy} was proved in
\cite{guo2022compactness}. Our argument treats the compact and noncompact
cases simultaneously and yields an explicit lower bound. A crucial ingredient in the proof is the algebraic
characterization of \(\mathbb W(\xi)\) established in \cite{SZ}.

\

The resulting entropy bound provides the hypothesis needed to apply
\cite{LLW}. Thus, in each fixed dimension, every sequence of smooth
K\"ahler--Ricci shrinkers centered at minima of their soliton potentials
admits a subsequence converging in the pointed
\(\widehat C^\infty\)-Cheeger--Gromov sense,
\begin{equation}\label{metric-convergence}
   (X_i,\omega_i,J_i,f_i,p_i)
 \longrightarrow
 (X_\infty,d_\infty,\omega_\infty,J_\infty,f_\infty,p_\infty),
\end{equation}
and the singular set of the limit has Minkowski codimension at least four
\cite[Theorem~1.1, Remark~7.3 and (7.15)]{LLW}. For each smooth term, let
\[
 \pi_i:X_i\to Y_i
\]
denote its Fano fibration. 
The singular-fibration theorem
\cite{HZ} shows that the
limit \(X_\infty\) also carries a polarized Fano fibration structure
\[
 \pi_\infty:X_\infty\longrightarrow Y_\infty.
\] 

For a smooth
K\"ahler--Ricci shrinker, it was proved in
\cite[Proposition~7.2]{LZ} that its volume-growth order is equal to the real
dimension of the affine cone associated with its polarized Fano fibration.
The same relation extends to the singular limit \(X_\infty\); see
Theorem~\ref{thm:singular-high-level-dh}.

In the next result, we show that the dimension of the associated affine cone,
equivalently the volume-growth order, is preserved under taking limits.
This has useful consequences for the moduli spaces of smooth
K\"ahler--Ricci shrinkers. Let \(\mathcal{KRS}(n)\) denote the space of
\(n\)-dimensional smooth K\"ahler--Ricci shrinkers, and let
\(\mathcal{KRS}(n,k)\subset \mathcal{KRS}(n)\) denote the subspace consisting
of those with volume-growth order \(2k\), where
\(k\in\{0,1,\ldots,n\}\). Then by \cite{SZ} and \cite[Proposition 7.2]{LZ}, we know that $\mathcal{KRS}(n,0)$ consists precisely of compact K\"ahler--Ricci shrinkers and 
\[
 \mathcal{KRS}(n)
    =
    \bigsqcup_{k=0}^n \mathcal{KRS}(n,k).
\] For any such space, we use an overline to denote
its completion with respect to pointed
\(\widehat C^\infty\)-Cheeger--Gromov convergence.

\begin{theorem}
\label{int:affine-base-dimension-constancy}
For any convergent sequence as in \eqref{metric-convergence}, one has for all sufficiently large \(i\),
\[
    \dim_{\mathbb C} Y_i=\dim_{\mathbb C} Y_\infty.
\]
Consequently, the completion of the moduli space decomposes as the disjoint
union
\begin{equation}
    \overline{\mathcal{KRS}(n)}
    =
    \bigsqcup_{k=0}^n \overline{\mathcal{KRS}(n,k)}.
\end{equation}
\end{theorem}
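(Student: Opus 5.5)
We will prove that the volume-growth order passes to the limit, and then use the identification between this order and the dimension of the affine cone. Write $2k_i$ for the volume-growth order of $X_i$. By \cite[Proposition~7.2]{LZ}, $k_i=\dim_{\mathbb C}Y_i$. By Theorem~\ref{thm:singular-high-level-dh}, the limit $X_\infty$ has volume-growth order $2k_\infty$ with $k_\infty=\dim_{\mathbb C}Y_\infty$. It therefore suffices to show $k_i=k_\infty$ for large $i$. Since $k_i\in\{0,\dots,n\}$, we may pass to subsequences along which $k_i\equiv k$ is constant, and then show $k=k_\infty$.

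The key quantitative tool will be a Duistermaat--Heckman type expansion of the weighted volume of high level sets. For the polarized Fano fibration, the volume of $\{\rho\le r\}$ behaves like $c\,r^{2k}$ with a leading coefficient $c$. This coefficient is an equivariant volume of the affine cone $Y$ with respect to the induced Reeb field, times the volume of the generic fiber data.

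We first need uniformity of this asymptotic. Fix the normalization in \eqref{eq:canonical-shrinker-normalization} and use the standard shrinker estimates $\rho^2\approx d(p,\cdot)^2/4$ and $\Vol(B(p,r))\le Cr^{2n}$. We would aim to show two-sided bounds
\[
c_i\,r^{2k_i}\le \Vol\{\rho_i\le r\}\le C_i\,r^{2k_i}\qquad (r\ge r_0).
\]
The constants must be controlled uniformly in $i$: $c_i\ge c(n)>0$ by Theorem~\ref{int:uniform-entropy} combined with the algebraic formula for $\mathbb W$ from \cite{SZ}, and $C_i\le C(n)$ by rational bounds on the weights of $\xi_i$ acting on the coordinate ring of $Y_i$. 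Once this is in hand, the volume of a fixed ball converges under pointed $\widehat C^\infty$ convergence, since the singular set has codimension four and volume is continuous by \cite{LLW}. For each fixed $r$ this gives the same two-sided bounds on $\Vol_\infty\{\rho_\infty\le r\}$ with $k$ in place of $k_\infty$. Letting $r\to\infty$ then forces $k=k_\infty$.

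The main obstacle is uniformity of the large-scale volume growth in $i$: a priori the transition scale $r_0$ beyond which growth becomes $r^{2k_i}$ might drift to infinity. To handle this, I would express $\Vol\{\rho\le r\}$ through the weighted volume
\[
\int e^{-t\rho^2}\omega^n\sim t^{-k}\ \text{ as } t\to0,
\]
written as a Hilbert-series (index character) of the affine cone $Y_i$ via \cite{SZ}, and invoke \cite{HZ} to see that the polarized fibrations $\pi_i$ converge algebraically to $\pi_\infty$. Here we would try to show that the coordinate rings form a flat degeneration with bounded generators, so that the Hilbert functions and hence the Krull dimensions of $Y_i$ agree for large $i$. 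Upper semicontinuity of fiber dimension gives $\dim Y_\infty\ge k$. Equality would then follow from the uniform weighted volume lower bound, which prevents volume from escaping to infinity.

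The decomposition of $\overline{\mathcal{KRS}(n)}$ then follows immediately. Any limit point lies in the closure of exactly one $\mathcal{KRS}(n,k)$, namely the one with $k=\dim Y_\infty$, since the tail of any approximating sequence has constant $k_i=\dim Y_\infty$.
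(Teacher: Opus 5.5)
Your argument depends entirely on the uniform bound $c(n)\,r^{2k_i}\le\Vol\{\rho_i\le r\}\le C(n)\,r^{2k_i}$ for $r\ge r_0$, with $r_0$ independent of $i$. Nothing you cite provides it.

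\textbf{Why the cited inputs fail.} Theorem~\ref{int:uniform-entropy} bounds $\mathbb W(\xi_i)=(2\pi)^{-n}\int e^{-f_i}\omega_i^n/n!$ from below. That Gaussian-weighted volume only sees the region $\{f_i=O(1)\}$ and is finite whatever $k_i$ is. So it gives noncollapsing at bounded scales, but no control of the coefficient of $r^{2k_i}$, nor of the small-$t$ asymptotics of $\int e^{-t\rho_i^2}\omega_i^n$ that you propose to use. There are also no general uniform bounds on the weights of $\xi_i$ on the coordinate ring of $Y_i$; the paper obtains such bounds only for $k=1$ (Theorem~\ref{thm:rank-one-volume-coefficient}).

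\textbf{Why the estimate is open or circular.} Your required estimate is essentially Conjecture~\ref{conj:uniform-volume-growth}, plus a uniform transition scale. The paper leaves it open for $2\le k\le n-1$. The one case it proves, $k=n$ (Theorem~\ref{thm:uniform-avr-gap}), is derived \emph{from} the dimension constancy you are trying to establish, so using it here would be circular. Both halves of your bound are needed: the uniform lower bound rules out $k>k_\infty$, and the uniform upper bound rules out $k<k_\infty$. So even the direction that is elementary in the holomorphic picture becomes a hard quantitative statement in yours.

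\textbf{The fallback.} A flat degeneration of the coordinate rings with bounded generators is exactly the algebraic boundedness behind Conjecture~\ref{conj:flat-convergence}, also open. The remark that the weighted-volume bound ``prevents volume from escaping to infinity'' does not address the issue. Volumes of fixed balls already converge by \cite{LLW}; the growth exponent is a property at infinity that convergence on compact sets does not detect.

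\textbf{The missing idea.} Compare holomorphic functions on a fixed compact region instead of volumes at infinity. The coordinate functions of $\pi_\infty$ are Gaussian-$L^2$ (Lemma~\ref{lem:homogeneous-section-weighted-integrability}). The paper lifts them to holomorphic functions $H_i$ on $X_i$ by a global H\"ormander estimate with weight $e^{-f_i}$, whose curvature is exactly $\omega_i$, so the constants are uniform (Lemma~\ref{cor:ab-base-function-lifting}, Proposition~\ref{prop:global-plurianticanonical-lifting}). The $H_i$ converge to $\pi_\infty$ uniformly on pointed balls and smoothly on $\mathcal R$.
\begin{itemize}
\item Since $Y_i$ is affine and $\pi_{i*}\mathcal O_{X_i}=\mathcal O_{Y_i}$, every $H_i$ factors through $\pi_i$. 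A point of $\mathcal R$ where $d\pi_\infty$ has rank $d=\dim Y_\infty$ therefore gives $d\le\dim Y_i$.
\item For the reverse inequality, properness of $\pi_\infty$ transfers to compactness of the connected component of a level set of $H_i$ through a generic point $x_i\to x_\infty$. Being compact and connected, it maps to a point of the affine $Y_i$, so it lies in a $\pi_i$-fiber. Comparing its dimension with the generic fiber dimension $n-\dim Y_i$ gives $\dim Y_i\le d$.
\item The case $d=0$ is just compactness of $X_\infty$ passing to $X_i$.
\end{itemize}
Everything happens on a bounded region, so no uniformity at infinity is needed.

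Your final deduction of the decomposition of $\overline{\mathcal{KRS}(n)}$ from the dimension equality is fine.
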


\

As a consequence of the above result and the uniform integral curvature
estimate of \cite[Corollary~6.24]{LW}, we obtain the following uniform lower
bound for the volume ratio of K\"ahler--Ricci shrinkers with maximal volume
growth. We note that the corresponding uniform upper bound was established in the general Riemannian setting in
\cite{CaoZhou,HM}.

\begin{theorem} 
\label{int:uniform-avr-gap}
For every \(n\geq1\), there exists a constant \(\delta_n>0\) such that every
complex \(n\)-dimensional K\"ahler--Ricci shrinker with maximal volume growth,
i.e. every element of \(\mathcal{KRS}(n,n)\), satisfies
\[
    \lim_{r\to\infty}
    \frac{\operatorname{Vol}_g B_g(p,r)}{r^{2n}}
    \geq \delta_n.
\]
\end{theorem}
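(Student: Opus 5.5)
I would argue by contradiction and compactness. Suppose there is a sequence \((X_i,\omega_i,J_i,f_i,p_i)\in\mathcal{KRS}(n,n)\), with \(p_i\in\operatorname*{argmin}f_i\), whose asymptotic volume ratios \(\nu_i:=\lim_{r\to\infty}r^{-2n}\Vol B_{g_i}(p_i,r)\) tend to \(0\). By Theorem~\ref{int:uniform-entropy} the entropies are uniformly bounded below. So the compactness theory of \cite{LLW} applies, and after passing to a subsequence we may assume convergence as in \eqref{metric-convergence} to a limit \(X_\infty\) with Fano fibration \(\pi_\infty:X_\infty\to Y_\infty\). Each \(X_i\) has maximal volume growth, so \(\dim_{\C}Y_i=n\). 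Theorem~\ref{int:affine-base-dimension-constancy} then gives \(\dim_{\C}Y_\infty=n\). By the singular extension of \cite[Proposition~7.2]{LZ} (Theorem~\ref{thm:singular-high-level-dh}), the limit therefore has volume-growth order \(2n\), and \(\nu_\infty>0\).

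The main work is to show that the asymptotic volume ratio is lower semicontinuous along the sequence, i.e.\ \(\liminf_i\nu_i\geq c\,\nu_\infty\) for some \(c>0\). Volume convergence on fixed balls is not enough, because \(\nu_i\) is an \(r\to\infty\) quantity. I would connect large radii with bounded radii using the soliton structure. On a shrinker, with \(\rho^2=f+n\), the quantity \(V(s)=\Vol\{\rho<s\}\) obeys the identity \(nV-\int_{\{\rho<s\}}R=\tfrac12 sV'\cdot(\ldots)\) of Cao--Zhou type. This gives the monotonicity
\[
 \frac{d}{ds}\Bigl(s^{-2n}V(s)\Bigr)=-\frac{C}{s^{2n+1}}\int_{\{\rho<s\}}R\,(\text{weights})\le 0 .
\]
Hence \(s^{-2n}V(s)\) decreases to \(\nu\) up to a dimensional factor (since \(\rho\sim r/2\)). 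Its total drop from a fixed scale \(s_0\) to \(\infty\) is controlled by \(\int_{s_0}^\infty s^{-2n-1}\int_{\{\rho<s\}}R\,ds\). It follows that
\[
 \nu_i\ \ge\ c_n\, s_0^{-2n}V_i(s_0)\;-\;C_n\int_{s_0}^{\infty}s^{-2n-1}\!\int_{\{\rho_i<s\}}R_i\,ds .
\]

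The main obstacle is making the error term small uniformly in \(i\) for large \(s_0\). This is exactly where the uniform integral curvature estimate \cite[Corollary~6.24]{LW} enters. I would use it to bound \(\int_{\{\rho_i<s\}}|Rm_i|\) (hence \(\int R_i\)) by \(C\,s^{2n-2}\), or at least by \(C s^{2n-\epsilon}\), with \(C\) depending only on \(n\) and the entropy bound. Then the tail is bounded by \(C s_0^{-2}\) (resp.\ \(C s_0^{-\epsilon}\)), which is uniform in \(i\). If \cite{LW} gives only a \(\Vol\)-normalized bound, I would combine it with the uniform volume upper bound \(V_i(s)\le Cs^{2n}\) of \cite{CaoZhou,HM}.

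To finish, fix \(s_0\) large so that the tail is less than \(\tfrac14 c_n\nu_\infty\). The volume convergence of \cite{LLW} (Minkowski codimension four singular set, \(\widehat C^\infty\) convergence, and \(f_i\to f_\infty\) locally uniformly) gives \(V_i(s_0)\to V_\infty(s_0)\ge \nu_\infty s_0^{2n}\) by the monotonicity on the limit. Then \(\nu_i\geq \tfrac12 c_n\nu_\infty>0\) for large \(i\), which contradicts \(\nu_i\to0\).
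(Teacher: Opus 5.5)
Your proposal is correct and follows essentially the same route as the paper. The paper's argument is: contradiction plus LLW compactness from the uniform entropy bound; $\dim_{\C}Y_\infty=n$ via base-dimension constancy and Theorem~\ref{thm:singular-high-level-dh} on the limit; and a uniform comparison between finite-scale volume ratios and the asymptotic volume ratio (Proposition~\ref{prop:effective-avr}), obtained from the Cao--Zhou identity together with the Li--Wang integral curvature bound and H\"older's inequality.

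One correction. As you wrote it, $s^{-2n}\mathsf V(s)$ is not monotone in general, because its derivative contains a nonnegative $\mathsf S'$ term. The quantity that is nonincreasing is $P(r)=\mathsf V(r^2/2)r^{-2n}-2\mathsf S(r^2/2)r^{-2n-2}$, and only for $r\geq\sqrt{2n+2}$. Since $0\leq\mathsf S\leq n\mathsf V$, the correction term is a relative $O(r^{-2})$, so your displayed lower bound for $\nu_i$ still holds. On the limit space you can skip monotonicity entirely, which would otherwise have to be justified across the singular set. Instead, use the lower bound $\mathsf V_\infty(r^2/2)\geq c_\infty r^{2n}$ from Theorem~\ref{thm:singular-high-level-dh} directly.
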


\

Using the noncompact
Duistermaat--Heckman localization \cite{PratoWu}, applied directly to the
possibly irrational soliton field, we obtain the polynomial volume growth of every K\"ahler--Ricci shrinker.  Then using
 shrinker identities, we obtain the precise growth order for the integral of scalar curvature.

\begin{theorem}
\label{int:polynomial-volume-scalar-mass}
Let \(X\in \mathcal{KRS}(n,k)\), where \(k\in\{1,\ldots,n\}\). Then the
following hold.
\begin{enumerate}
    \item For all sufficiently large \(s\), the function
    \[
        s\longmapsto \operatorname{Vol}\{\rho^2<s\}
    \]
    agrees with a polynomial of degree \(k\) with positive leading
    coefficient.

    \item One has
    \[
        0<
        \lim_{r\to\infty}
        \frac{\operatorname{Vol}_g B_g(p,r)}{r^{2k}}
        <\infty,
        \qquad
        \lim_{r\to\infty}
        \frac{\int_{B_g(p,r)} R_\omega\,\omega^n/n!}
             {\operatorname{Vol}_g B_g(p,r)}
        =n-k.
    \]

    \item If \(k=n\), then the limit
    \[
        \lim_{r\to\infty}
        r^{2-2n}
        \int_{B_g(p,r)} R_\omega\,\frac{\omega^n}{n!}
    \]
    exists and is positive unless \(X\) is Gaussian.
\end{enumerate}
\end{theorem}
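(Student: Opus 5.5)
Part (1) comes from Duistermaat--Heckman localization applied directly to the soliton field. The potential \(f\) is, up to a constant, a Hamiltonian for \(\xi\). Indeed, \(\iota_\xi\omega = -d f/2\) (or a similar normalization), so \(\rho^2=f+n\) is a proper moment map for the torus \(T\) generated by the closure of the flow of \(\xi\). I will choose a generic \(\eta\) in the Lie algebra of \(T\) near \(\xi\) and apply the noncompact DH formula of \cite{PratoWu} to the pushforward of \(\omega^n/n!\) under \(\langle\mu,\eta\rangle\). Properness, and the fact that the fixed-point set \(X^T\) is compact (it lies in the compact set \(\{\nabla f=0\}\)), make the localization valid. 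The result is a piecewise-polynomial DH measure whose pieces change only at the finitely many critical values. Beyond the largest critical value, \(s\mapsto\operatorname{Vol}\{\rho^2<s\}\) is therefore a polynomial.

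To identify the degree with \(k\), I will compare with the weighted volume. The Laplace transform \(\int e^{-t\rho^2}\omega^n/n!\) equals the equivariant volume \(\mathbb W(t\xi)\) up to constants. By the algebraic characterization of \cite{SZ} (the Hilbert-series/affine-cone description with cone of complex dimension \(k=\dim Y\)), it has a pole of exact order \(k\) at \(t=0\), with positive leading coefficient. This matches \cite[Proposition~7.2]{LZ}. A polynomial whose Laplace transform has a pole of order exactly \(k\) has degree \(k\), so the degree is \(k\) with positive leading coefficient.

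For part (2) I will use the standard shrinker comparison \(\tfrac14(r-c_n)^2\le f\le\tfrac14(r+c_n)^2\) (Cao--Zhou), which squeezes \(B_g(p,r)\) between sublevel sets of \(\rho^2\) at levels \(r^2/4\pm O(r)\). The polynomial from (1), evaluated at \(s=r^2/4\), then gives the volume limit \(a_k 4^{-k}\), which is positive and finite; the \(O(r)\) shift only affects lower-order terms. For the scalar curvature, I integrate the trace identity \(R+\Delta f=n\) (here \(R_\omega\) is the Kähler trace, with \(\Delta_\omega f\) the complex Laplacian) over \(\{\rho^2<s\}\). This gives \(\int R = n V(s) - \int_{\{\rho^2=s\}}|\nabla f|\). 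Using \(|\nabla f|^2 = f - R\) (from the normalization) and the coarea formula, the boundary flux equals \(V'(s)\cdot(\text{average of }|\nabla f|^2)\), which is \(s V'(s)\) minus a term controlled by \(\int R\) on the level set. Substituting \(V(s)=a_k s^k+\dots\) gives \(sV'(s)/V(s)\to k\), hence \(\int R/V\to n-k\). Passing from sublevel sets back to balls again uses the squeeze, together with \(0\le R\le f\) (Chen's \(R\ge0\)). The error terms must then be controlled: the boundary term has the form \(\int_{\rho^2=s}R/|\nabla f|\), and I would handle it by solving an ODE in \(s\) for \(F(s)=\int_{\rho^2<s}R\). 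The precise identity is \(\frac{d}{ds}\)-type: \(nV - F = \int_{\{\rho^2=s\}}|\nabla f| = s V' - \text{(lower)}\), and since \(F\) is itself polynomial for large \(s\) (DH again, with the equivariant extension of \(\operatorname{Ric}\)), the lower-order terms match exactly.

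For part (3), with \(k=n\), the identity gives \(F(s)=nV(s)-sV'(s)+\dots\). The top \(s^n\) coefficients cancel, so \(F\) is a polynomial of degree at most \(n-1\), and \(r^{2-2n}F\) converges. To show the limit is positive, note that the \(s^{n-1}\) coefficient of \(F\) is a DH/equivariant quantity, namely the subleading coefficient of \(\mathbb W(t\xi)\) at \(t=0\), which is nonnegative because \(R\ge0\). If it vanished, \(F\) would be \(o(s^{n-1})\), and the volume asymptotics would force the asymptotic cone to be flat with \(R\) integrally small. Combined with the rigidity that \(\mathbb W\) equals the Gaussian value, this makes \(X\) Gaussian. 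I expect the main obstacle to be this rigidity step: showing vanishing of the subleading coefficient forces \(X\cong\mathbb C^n\), presumably via \(\mathbb W(\xi)\le 1\) with equality only for Gaussian, read off from the Laurent expansion.
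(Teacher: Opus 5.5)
Your parts (1) and (2) follow the paper's route: Prato--Wu localization for eventual polynomiality of $\mathsf V$, the potential squeeze for balls, and the coarea identity $s\mathsf V'-n\mathsf V=\mathsf S'-\mathsf S$. The genuine gap is the positivity claim in part (3).

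Existence of the limit is fine: $b_n=0$, $b_{n-1}=a_{n-1}$, and the limit is $2^{1-n}a_{n-1}$. Your rigidity route for positivity cannot work, however. The coefficient $a_{n-1}$ only enters the polar part at $t=0$ of $\int_X e^{-t\rho^2}\omega^n/n!$. By contrast, $\mathbb W(\xi)$ is, up to a constant, its value at $t=1$, and it depends on the whole function $\mathsf V$, including its behaviour below the critical values. So $a_{n-1}=0$ says nothing about $\mathbb W(\xi)$ taking the Gaussian value. The claim that the volume asymptotics would force a flat asymptotic cone is also unsupported.

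The missing ingredient is a pointwise lower bound for $R_\omega$ at infinity. By Chow--Lu--Yang \cite{ChowLuYang}, a nonflat shrinker satisfies $R_\omega(x)\,d_g(p,x)^2\geq c_X>0$ outside some ball $B_g(p,r_1)$. Combined with $\Vol B_g(p,r)\asymp r^{2n}$, this gives
\[
 \int_{B_g(p,r)}R_\omega\,\frac{\omega^n}{n!}\ \geq\ \frac{c_X}{r^2}\Bigl(\Vol B_g(p,r)-\Vol B_g(p,r_1)\Bigr)\ \gtrsim\ r^{2n-2},
\]
so the limit is positive. Nonflat is equivalent to non-Gaussian here: on the universal cover of a flat shrinker the potential is $\tfrac12|x-x_0|^2+c$, so the deck group fixes $x_0$ and is therefore trivial.

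Parts (1) and (2) also need some repairs.
\begin{itemize}
\item Do not perturb $\xi$. Localization for $\langle\mu,\eta\rangle$ with $\eta\neq\xi$ says nothing directly about $\{\rho^2<s\}$. No perturbation is needed, since density of the $\xi$-flow in $\mathbb T$ already makes $\xi$ regular in the sense of \cite{PratoWu}.
\item For the degree, use $\mathsf V\asymp s^k$ from \cite[Proposition~7.2]{LZ} or the reduction computation. An exact pole of order $k$ for $\mathbb W(t\xi)$ at $t=0$ is equivalent to this. It does not follow from the character formula without interchanging its large-power limit with $t\to0$.
\item In the paper's normalization $\rho^2\approx d_g(p,\cdot)^2/2$, so the volume limit is $2^{-k}a_k$, not $4^{-k}a_k$. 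This is harmless.
\item Most importantly, polynomiality of $\mathsf S$ is not a second application of Duistermaat--Heckman. The $\xi$-Hamiltonian of $\operatorname{Ric}(\omega)$ is $R_\omega-n$. Localizing the equivariant Ricci form therefore only expresses $(1-t)\int_X e^{-tf}R_\omega\,\omega^n/n!$ through fixed-point data and $t\int_X e^{-tf}\omega^n/n!$. That is the same ODE, and the factor $1-t$ is exactly the homogeneous solution $Ce^s$.
\end{itemize}
As in the paper, write $\mathsf S=Q_X+Ce^s$ above the last critical value. Kill $C$ using $0\leq R_\omega\leq\rho^2$, i.e. $0\leq\mathsf S(s)\leq s\mathsf V(s)=O(s^{k+1})$. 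Comparing coefficients then gives $b_k=(n-k)a_k$ and the ratio $n-k$.
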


Motivated by Theorem \ref{int:uniform-avr-gap}, it is natural to conjecture that the asymptotic volume ratio for elements in $\mathcal{KRS}(n,k)$ also has a uniform bound depending only on $n$ and $k$; see Conjecture \ref{conj:uniform-volume-growth}. We prove the case $k=1$ in the following and the intermediate cases remain open.
\begin{theorem}
\label{int:rank-one-volume-coefficient}
For every \(n\geq1\), there exist constants
\(
    0< C_{n,1}<\infty
\)
such that every \(X\in\mathcal{KRS}(n,1)\) satisfy
\[
    \frac{1}{C_{n,1}}
    \leq
    \lim_{r\to\infty}
    \frac{\operatorname{Vol}_g B_g(p,r)}{r^2}
    \leq
    C_{n,1}.
\]
\end{theorem}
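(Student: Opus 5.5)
We would argue via the Duistermaat--Heckman description of the volume growth together with the algebraic structure of rank-one Fano fibrations. For $X\in\mathcal{KRS}(n,1)$, the base $Y$ of the polarized Fano fibration $\pi:X\to Y$ is a one-dimensional affine cone. Since $Y$ is normal, it is $\mathbb C$ with a $\mathbb C^*$-action $t\cdot z=t^a z$. The soliton vector field then acts on $Y$ with a single positive weight $a$. By Theorem~\ref{int:polynomial-volume-scalar-mass}, $\operatorname{Vol}\{\rho^2<s\}$ is, for large $s$, a linear polynomial $As+B$ with $A>0$. Since $\rho\sim r/2$ (standard shrinker asymptotics $f\sim r^2/4$), the asymptotic volume ratio equals $A/4$ up to a dimensional constant. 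It therefore suffices to bound $A$ above and below in terms of $n$.

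\textbf{Step 1: compute $A$ by localization.} We apply the noncompact Duistermaat--Heckman formula of \cite{PratoWu} to the moment map $\rho^2$ (equivalently $f$) of $\xi$. The polynomial part of the Duistermaat--Heckman measure at infinity is governed by the base direction. Concretely, the leading coefficient $A$ should equal $(2\pi)^n$ times the fibrewise volume $\int_{F}[\omega]^{n-1}/(n-1)!$ of a generic fibre $F=\pi^{-1}(y)$, divided by the weight $a$ of $\xi$ on the base. Here the fibre is a compact Fano manifold, or a degeneration with $[\omega|_F]=c_1(F)$, since $\operatorname{Ric}+\partial\bar\partial f=\omega$ and $f$ restricts to a potential on the compact fibre. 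The claimed formula is therefore
\[
A = \frac{c_n}{a}\,\bigl(-K_F\bigr)^{n-1}.
\]
We would verify it by comparing the large-$s$ asymptotics of the weighted volume $\int e^{-s f}$, which is the algebraic index character of \cite{SZ}, with the Laplace transform of $As+B$. The leading pole at $s=0$ has order one, with residue $A$ (up to normalization), and this residue is the fibre degree divided by $a$.

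\textbf{Step 2: bound the ingredients.} The upper bound on $A$ also follows from the general Riemannian volume upper bound \cite{CaoZhou,HM} applied to level sets. For the lower bound we need two estimates. First, $(-K_F)^{n-1}\geq 1$, since it is a positive integer; more precisely, after accounting for multiple fibres, it is a rational number with bounded denominator. Second, $a$ is bounded above by a constant depending only on $n$.

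\textbf{Main obstacle.} The hard step is the upper bound on the weight $a$. For that we would reuse the argument behind Theorem~\ref{int:uniform-entropy}: the entropy lower bound $\mathbb W(\xi)\geq 1/(N_n^n n!)$, combined with the index-character formula, expresses $\mathbb W(\xi)$ as an integral over the fibre weighted by $1/a$. An upper bound for the fibre contribution (via the Fano volume bound $(-K_F)^{n-1}\le C_n$ in the smooth case, or via the generic fibre) then forces $a\le C_n$. Alternatively, a compactness argument works: by Theorem~\ref{int:affine-base-dimension-constancy}, limits of sequences in $\mathcal{KRS}(n,1)$ stay in $\overline{\mathcal{KRS}(n,1)}$. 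If $A$ degenerated to $0$ or $\infty$ along a sequence, the limit would have the wrong volume-growth order, contradicting Theorem~\ref{int:affine-base-dimension-constancy} together with the singular version of the volume-growth/cone-dimension correspondence (Theorem~\ref{thm:singular-high-level-dh}). The delicate point in this route is the uniform control of the lower-order term $B$. Since $\int_{B(p,r)}R$ has growth rate $n-1$ times the volume, which is uniform by Theorem~\ref{int:polynomial-volume-scalar-mass}, we would use that identity to make the convergence of the volume polynomials uniform.
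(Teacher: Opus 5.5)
There is a genuine gap. Your Step~1 formula is the right one: it is the paper's identity $\Theta_X=\frac{\pi}{\alpha}\operatorname{Vol}_\omega(Z)=\frac{\pi(2\pi)^{n-1}}{(n-1)!}\frac{(-K_Z)^{n-1}}{\alpha}$, with your $a$ being the paper's $\alpha$. The paper obtains it by rational approximation of $\xi$ and K\"ahler reduction rather than by a residue of the index character, and your route is plausible. The problem is Step~2, starting with the upper bound.

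\textbf{Upper bound.} The Cao--Zhou/Haslhofer--M\"uller estimate is $\operatorname{Vol}B(p,r)\leq C_n r^{2n}$, which has Euclidean order. For $n\geq2$ it says nothing about the coefficient of $r^2$. The reason is that $\mathsf V(s)=As+B$ only holds beyond the largest critical value of $\rho^2$, which depends on $X$, and $B$ is uncontrolled. What you need is a uniform \emph{lower} bound on the base weight $a$, and you never supply one. The paper gets it as follows: $h$ is a nonconstant homogeneous holomorphic function, hence an eigenfunction of the drift Laplacian, and the Bakry--\'Emery/Bochner spectral gap forces $a\geq 1$. Combined with $(-K_Z)^{n-1}\leq B_{n-1}<\infty$, from boundedness of smooth Fano $(n-1)$-folds, this gives the upper bound. (The general fibre is smooth because $X$ is, so your bounded-denominator caveat is moot.)

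\textbf{Lower bound, compactness route.} This route fails as stated. Pointed $\widehat C^\infty$-convergence controls only balls of fixed radius. By contrast, $\Theta_{X_i}$ is read off beyond an $X_i$-dependent threshold that may escape to infinity. So a limit of growth order $2$ is compatible with $\Theta_{X_i}\to0$ or $\infty$. The identity $\mathsf S=(n-1)\mathsf V+O_X(1)$ has an $X$-dependent error and supplies no uniformity. For $k=n$ the paper's compactness argument works only because the monotone quantity $P(r)$ gives a uniform convergence rate, and this has no analogue here.

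\textbf{Lower bound, entropy route.} This route can be completed, but its decisive step is missing. Write $H^0(X,-kK_X)$ as a free graded $\mathbb C[h]$-module with homogeneous basis of charges $c_{k,j}$. The character formula then gives $\mathbb W(\xi)=\lim_k k^{-n}(1-e^{-a/k})^{-1}\sum_j e^{-c_{k,j}/k}$. The Fano volume bound alone does not control this sum, because the charges could be very negative. You need the uniform bound $c_{k,j}\geq-(k+1)n$. It follows from the first-moment identity: $c/(k+1)$ equals the $|s|^2_{h^k}e^{-f}\omega^n$-average of $f$, which is $\geq -n$ since $f+n=\rho^2\geq0$. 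This yields $\mathbb W(\xi)\leq e^n(-K_Z)^{n-1}/(a\,(n-1)!)$, hence $\Theta_X\geq \pi(2\pi)^{n-1}e^{-n}\mathbb W(\xi)$, which is uniform by the entropy bound.

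\textbf{The paper's route.} The paper instead bounds $\alpha$ locally at $p$. Take eigen-coordinates with weights $\lambda_j\geq0$ and $\sum_j\lambda_j=n-R_\omega(p)\leq n$. Every monomial in the leading Taylor term of $\pi^*h$ has weight $\alpha$, so $\alpha\leq n\,a_p$. Then $a_p=\operatorname{mult}_pE\leq\deg_LE=N_n^{n-1}(-K_Z)^{n-1}\leq N_n^{n-1}B_{n-1}$. This uses Fujino's $\pi$-very ampleness of $L=-N_nK_X$, flatness of $\pi$, and the multiplicity--degree inequality.
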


\

As a noncompact analogue of \cite{DS}, it is conjectured in
Conjecture~\ref{conj:flat-convergence} that the convergence in
\eqref{metric-convergence} can be realized within a flat family. Motivated
by this conjecture, we prove the following cone-isolation result.

\begin{theorem}[Cone isolation]\label{int:cone-isolation}
The boundary
\(
    \overline{\mathcal{KRS}(n)}\setminus\mathcal{KRS}(n)
\)
contains no (singular) Ricci-flat K\"ahler cones. More precisely, for a
convergent sequence as in \eqref{metric-convergence}, if the limit
\(X_\infty\) is Ricci-flat on its regular locus, then \(X_i\) is the Gaussian
shrinker for all sufficiently large \(i\).
\end{theorem}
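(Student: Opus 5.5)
The plan is to argue by contradiction through the weighted volume $\mathbb W$, together with the continuity of the algebraic data of the Fano fibration under the convergence \eqref{metric-convergence}. Suppose the limit $X_\infty$ is Ricci-flat on its regular locus while infinitely many $X_i$ are non-Gaussian.

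\textbf{Step 1: the limit is a cone with the Gaussian weighted volume.} On the regular locus of $X_\infty$ the shrinker equation reads $\sqrt{-1}\partial\bar\partial f_\infty=\omega_\infty$. So $f_\infty$ is a Kähler potential and $\nabla f_\infty$ is (half of) the radial Euler field of a cone metric. Hence $X_\infty$ is a Ricci-flat Kähler cone with vertex $p_\infty$ and Reeb field $\xi_\infty$. In this case the fibration base is the whole space, $Y_\infty=X_\infty$. By Theorem~\ref{int:affine-base-dimension-constancy} we get $\dim Y_i=n$ for large $i$, so $X_i\in\mathcal{KRS}(n,n)$ and $\pi_i$ is birational onto an affine cone $Y_i$.

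Next I would compute $\mathbb W(\xi_\infty)$. With $\rho^2=f+n$ and $\Delta f=n$ on the cone, the normalization forces $f_\infty=|\nabla f_\infty|^2=r^2/4$ up to the constant. Gaussian integration over the cone then gives
\[
\mathbb W(\xi_\infty)=\frac{\operatorname{Vol}(\text{link})}{\operatorname{Vol}(S^{2n-1})}=:\Theta\le 1.
\]
Equality holds only for $\mathbb C^n$, by Bishop--Gromov on the Ricci-flat cone. Since the convergence is smooth on the regular part and the entropy is uniformly bounded below (Theorem~\ref{int:uniform-entropy}), I would use the uniform integrability of $e^{-f}$ (the quadratic growth of $f$ and the volume upper bound \cite{CaoZhou,HM}) to get $\mathbb W(\xi_i)\to\mathbb W(\xi_\infty)=\Theta$.

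\textbf{Step 2: a strict gap between shrinkers and cones.} This is the main obstacle. The key fact I would use is that for a non-Gaussian shrinker with maximal volume growth, its asymptotic cone $C_i$ has Reeb field $\xi_i$, and a Bishop--Gromov type monotonicity for $\mathbb W$ gives
\[
\mathbb W(\xi_i)\ \ge\ \text{(asymptotic volume ratio of }X_i)\cdot(\text{normalization}),
\]
with equality only when $X_i$ is the cone itself, i.e.\ Gaussian. That is not enough, since we need quantitative separation. My preferred approach is algebraic. Using the algebraic characterization of $\mathbb W$ from \cite{SZ}, $\mathbb W(\xi_i)$ is the normalized volume $\widehat{\mathrm{vol}}$ of the valuation $\xi_i$ on the affine cone $Y_i$. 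The extra term in the shrinker equation then gives $\mathbb W(\xi_i)=\mathrm{vol}_{Y_i}(\xi_i)\cdot c_n$, whereas for the limit cone the pairing $A(\xi_\infty)=n$ holds, and the relation to $\Theta$ differs.

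Concretely, I would try to show that $X_i$ non-Gaussian forces one of two things: either $\mathbb W(\xi_i)\ge 1$ (via the Kähler shrinker identity $\int R e^{-f}\ge 0$ and positivity of the scalar curvature mass from Theorem~\ref{int:polynomial-volume-scalar-mass}(3)), or the exceptional locus of $\pi_i$ is nonempty with a discrepancy contributing a fixed positive amount to the log discrepancy. Either alternative should produce the bound $\mathbb W(\xi_i)\ge\Theta+\epsilon_n$, contradicting the convergence. If this algebraic route stalls, the fallback is the Riemannian identity
\[
\int_{X_i} R\,e^{-f}\omega^n=n\int e^{-f}\omega^n-\int|\nabla f|^2e^{-f}\omega^n.
\]
Both sides converge, and the left side tends to $0$ on the limit. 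I would combine this with the scalar curvature lower bound on balls centered at $p_i$ (item (3) of the abstract) to rule out $R_i\to0$ for non-Gaussian shrinkers; a uniform lower bound on $R$ near $p_i$ is exactly what directly contradicts $R_\infty\equiv0$.

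Indeed, the cleanest route is probably this last one. Once we have a uniform positive lower bound $R_{\omega_i}\ge c_n>0$ on $B(p_i,1)$ for non-Gaussian $X_i$, and since $B(p_\infty,1)$ contains regular points where the convergence is smooth, $R_\infty\ge c_n$ there, contradicting Ricci-flatness. So I would prove that lower bound first, by contradiction and an induction on the dimension of the splitting: a limit with $R\equiv0$ near the base point is a Ricci-flat cone, and then one runs the $\mathbb W$-gap argument above.
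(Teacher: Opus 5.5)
There is a genuine gap: Step~2 is never actually proved, and the proposed ``cleanest route'' is circular.

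Step~1 is essentially sound. The only correction is the normalization: with the paper's convention a cone has $\mathbb W(\xi_\infty)=e^n\Theta$, and the Gaussian has $\mathbb W=e^n$. The convergence $\mathbb W(\xi_i)\to\mathbb W(\xi_\infty)$ does follow from weighted-measure continuity. But since that convergence is automatic, a contradiction needs a \emph{uniform rigidity} statement $\mathbb W(\xi_i)\geq\mathbb W(\xi_\infty)+\epsilon_n$ for non-Gaussian $X_i$. None of your proposed mechanisms delivers it:
\begin{itemize}
\item The first branch of your dichotomy ($\mathbb W(\xi_i)$ at least the Gaussian value, i.e.\ $\geq1$ in your normalization) never occurs for a non-Gaussian shrinker, because $\boldsymbol\mu\leq0$ with equality only for the Gaussian.
\item The second branch (``a discrepancy contributing a fixed positive amount'') is an unsupported assertion, with no link to $\Theta$.
\item The positivity in Theorem~\ref{int:polynomial-volume-scalar-mass}(3) is an object-dependent statement about infinity. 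It gives no uniform $\epsilon_n$ near $p_i$.
\item The identity for $\int R\,e^{-f}\omega^n$ is also satisfied by the limit, so it produces no contradiction.
\end{itemize}
The ``cleanest route'' is circular. In the paper, the scalar-curvature gap (Theorem~\ref{int:scalar-curvature-gap}) is \emph{deduced from} cone isolation, using the fact from \cite{LLW} that a limit with $R=0$ at a regular point is a Ricci-flat cone. Your own plan for proving that gap ends by running the $\mathbb W$-gap argument, which is exactly what is missing.

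The missing idea is algebro-geometric rather than entropic. The paper's argument runs as follows.
\begin{enumerate}
\item Lift to $X_i$ finitely many holomorphic functions on the cone whose common zero locus is the vertex, using the global H\"ormander lifting of Lemma~\ref{cor:ab-base-function-lifting}.
\item These lifts are constant on the compact connected central fiber $E_i=\pi_i^{-1}(o_i)$, which contains the torus-fixed point $p_i$. Hence $E_i\subset B(p_i,\epsilon)$ for large $i$ (Proposition~\ref{prop:localized}).
\item Independently, Donaldson--Sun type peak sections at the vertex (Proposition~\ref{prop:sequential-peaks}) give $\sigma_i\in H^0(X_i,-qK_{X_i})$ with $|\sigma_i|_{h_i^q}\geq c$ on a fixed ball $B(p_i,2a)$.
\item So $\sigma_i$ trivializes the $\pi_i$-ample bundle $-qK_{X_i}$ along $E_i$, which forces $\dim E_i=0$.
\item Then $\pi_i$ is an isomorphism, and $X_i\cong Y_i$ is a smooth affine cone, hence $X_i\cong\mathbb C^n$.
\item The uniqueness theorem \cite[Theorem~1.5]{LZ} then makes $X_i$ Gaussian.
\end{enumerate}
No quantitative entropy gap is needed anywhere. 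The entropy bound enters only through compactness, which is what makes the lifting and peak-section constructions available.
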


Following the argument of \cite{LLW,LW}, we obtain the following
scalar-curvature gap.

\begin{theorem}
\label{int:scalar-curvature-gap}
For every \(n\geq1\), there exists a constant \(\epsilon_n>0\) such that
every complete smooth non-Gaussian \(n\)-dimensional K\"ahler--Ricci
shrinker, satisfies
\[
    \inf_{B_g(p,1)} R_\omega \geq \epsilon_n,
\]
where \(p\) is any minimum point of the soliton potential.
\end{theorem}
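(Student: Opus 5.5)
We argue by contradiction and compactness, following the scheme of \cite{LLW,LW}. Suppose there is a sequence of smooth non-Gaussian K\"ahler--Ricci shrinkers \((X_i,\omega_i,J_i,f_i,p_i)\), with \(p_i\in\operatorname*{argmin} f_i\), such that \(\inf_{B_{g_i}(p_i,1)}R_{\omega_i}\to 0\). By Theorem~\ref{int:uniform-entropy} the entropies are uniformly bounded below, so \cite{LLW} lets us pass to a subsequence converging as in \eqref{metric-convergence} to a limit \(X_\infty\). The regular part of \(X_\infty\) is open, its singular set has Minkowski codimension at least four, and the convergence is smooth on the regular part.

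The first step is to turn the smallness of \(R\) at a point into \(R\equiv 0\) on the limit. The scalar curvature of a shrinker satisfies
\[
\Delta_f R = R - 2|\mathrm{Ric}|^2 \le R ,
\]
and \(R>0\) unless the shrinker is flat (Gaussian). Let \(x_i\in \overline{B_{g_i}(p_i,1)}\) be points where \(R_{\omega_i}(x_i)\to 0\). On a fixed ball around \(p_i\), \(f_i\) is bounded independently of \(i\), because \(f_i(p_i)\le n/2\) and \(|\nabla f|^2\le f\). A weak Harnack or mean-value inequality for the positive supersolution \(R\) of \((\Delta_f - 1)R\le 0\), applied on balls with uniform Sobolev and volume control, should then give
\[
\int_{B_{g_i}(p_i,2)} R_{\omega_i} \longrightarrow 0 .
\]
Regularity of the limit near \(x_\infty\) is not guaranteed, so we use the integral version. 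The limit Sobolev constant is controlled by the entropy bound, and this is exactly how \cite{LW} handle it. Propagating along the connected regular part, where \(R_\infty\ge0\) satisfies the same elliptic inequality and the strong maximum principle applies, we get \(R_\infty\equiv0\). The shrinker identity above then yields \(\mathrm{Ric}_\infty\equiv 0\) on the regular locus.

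Next we apply Theorem~\ref{int:cone-isolation}. The limit \(X_\infty\) is Ricci-flat on its regular locus, so \(X_i\) is the Gaussian shrinker for all large \(i\). This contradicts the assumption that every \(X_i\) is non-Gaussian, and it produces the constant \(\epsilon_n\).

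The main obstacle is the first step: passing from the pointwise infimum tending to zero to \(R_\infty\equiv0\) on the whole regular part when \(x_\infty\) may be singular. The route to try first is to use the integral estimates \(\int_{B(p,r)}R\le Cr^{2n-2}\) and the weak Harnack inequality on the smooth \(X_i\), where everything is uniform by the entropy bound. This yields \(L^1\)-smallness of \(R\) on \(B(p_i,2)\), and smooth convergence on compact subsets of the regular set then gives \(R_\infty=0\) on an open set. Unique continuation, or the strong maximum principle on the connected regular part, extends this to the whole regular locus. Connectedness of the regular part follows from the codimension-four property.
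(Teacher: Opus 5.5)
Your proposal is correct. Its outer structure matches the paper's: contradiction, compactness from \cite{LLW} via the uniform entropy bound, a Ricci-flat limit, and cone isolation. The genuine difference is how you show the limit is Ricci-flat.

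The paper quotes a quantitative consequence of \cite{LW} and \cite[Section~9]{LLW}. If $\sigma=\inf_{B(p,1)}R_\omega>0$, there is a point $z$ with $f(z)\le C$, $R_\omega(z)\le C\sigma$ and harmonic radius at least $r_0$. So the $z_i$ converge to a \emph{regular} point where $R_\infty=0$, and \cite[Proposition~9.9]{LLW} then makes the limit a Ricci-flat cone with vertex $p_\infty$.

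You argue differently. You apply the weak Harnack inequality to the nonnegative supersolution $R_i$ on the smooth $X_i$, so the average of $R_i$ over $B_{g_i}(p_i,2)$ is at most $C\sigma_i$. Smooth convergence turns this into $R_\infty\equiv0$ on the nonempty open set $\mathcal R\cap B(p_\infty,2)$, and the strong maximum principle spreads it over $\mathcal R$. Connectedness of $\mathcal R$ is cleanest from \cite[Theorem~1.1]{LLW}, which says the intrinsic distance of $\mathcal R$ equals $d$. Your route bypasses harmonic-radius control and is more self-contained; the paper's route is pointwise and quantitative.

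To make your first step airtight:
\begin{itemize}
\item Only the weak Harnack inequality gives the direction you need. A mean-value inequality bounds suprema of subsolutions, which is the wrong way round.
\item Uniform weak Harnack constants require volume doubling and a Poincar\'e inequality, not just Sobolev and volume bounds. Both hold uniformly on $B_{g_i}(p_i,10)$. There the Bakry--\'Emery tensor $\operatorname{Ric}+\nabla^2f$ is a positive multiple of $g$ and $|\nabla f|\le C(n)$, so Wei--Wylie comparison applies.
\item The zeroth-order term is absorbed by noting that $e^{ct}R_i$ is a supersolution of the weighted heat equation.
\item The growth bound $\int_{B(p,r)}R\le Cr^{2n-2}$ plays no role and can be dropped.
\end{itemize}

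Finally, Theorem~\ref{thm:cone-isolation} in the body assumes the limit is a metric cone. Passing from ``Ricci-flat on $\mathcal R$'' to a cone with vertex $p_\infty$ still uses \cite[Proposition~9.9]{LLW}. You invoke it implicitly through the introduction's formulation of cone isolation, and you should say so.
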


\    

Independently of the above results, by further developing the method of
\cite[Section~4]{LZ}, we prove the following scalar curvature decay criterion for
asymptotically conical K\"ahler--Ricci shrinkers. 

\begin{theorem}
\label{int--thm:quadratic-scalar-decay-implies-ac}
Let \((X^n,\omega,f)\) be a noncompact K\"ahler--Ricci shrinker.
Assume
\[
        R_\omega(x)\rightarrow 0,\quad  \text{as $x\rightarrow \infty$. }
\] 
Then \((X,\omega)\) is asymptotically conical.
\end{theorem}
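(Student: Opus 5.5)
The argument is a blow-down one, following \cite[Section~4]{LZ}. First I would turn the pointwise decay \(R_\omega\to0\) into the standard shrinker asymptotics. From the identities \(R+|\nabla f|^2=f\) (up to the normalization in \eqref{eq:canonical-shrinker-normalization}) and \(R\to0\), we get \(|\nabla f|^2/f\to1\) at infinity. Hence \(\rho\) is a proper function with \(|\nabla\rho|\to1\), the level sets \(\Sigma_s=\{\rho=s\}\) are smooth compact hypersurfaces for large \(s\), and the flow of \(\nabla f/|\nabla f|^2\) identifies them with one another. Along the gradient flow \(\Phi_t\) of \(-\nabla f\) (suitably reparametrized), the rescaled metrics \(g_\lambda=\lambda^{-2}\Phi^*g\) form the associated Ricci flow \(g(t)=(-t)\phi_t^*g\). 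The decay of \(R\) is the key input for the next step.

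Next I would upgrade scalar decay to full curvature decay, \(|Rm|\le C\rho^{-2}\), which is what is needed for asymptotic conicality. In the Kähler case, the plan is to show that the flow \(g(t)\) on the end converges smoothly as \(t\to0^-\) away from \(p\). Smooth convergence off a compact set already follows, via the standard Munteanu--Wang type argument, from a bound of the form \(|Rm|\le C\) on the end together with \(R\to0\). To obtain that bound I would follow \cite[Section~4]{LZ}. The scalar curvature controls the Ricci form \(\omega-\sqrt{-1}\partial\bar\partial f\) in the Kähler setting once we know \(\operatorname{Ric}\ge -C\) at infinity. That lower Ricci bound, in turn, comes from the algebraic structure of \cite{SZ}: the Fano fibration \(\pi:X\to Y\) with affine cone \(Y\), on which the soliton vector field \(\xi\) acts with positive weights. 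I would split into two cases: \(\dim Y=n\), where \(\pi\) is a resolution of a cone, and \(\dim Y<n\). In the case \(\dim Y<n\), the fibers are compact Fano varieties whose Ricci curvature stays of order one at infinity. Then \(R\) cannot tend to \(0\), which gives a contradiction. So decay of \(R\) forces \(\dim Y=n\) and maximal volume growth by \cite[Proposition~7.2]{LZ}. In particular, the asymptotic volume ratio is positive.

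With maximal volume growth, the entropy bound of Theorem~\ref{int:uniform-entropy} and \(R\to0\), I would take blow-downs \((X,\lambda^{-2}g,p)\), \(\lambda\to\infty\). By \cite{LLW} these converge subsequentially to a singular limit which, because \(R\to0\) and by the shrinker equation rescaled, is Ricci-flat on its regular part and is a metric cone. By Theorem~\ref{int:cone-isolation}-type reasoning, together with the identification of the tangent cone at infinity with the affine cone \(Y\) carrying the Reeb field \(\xi\), the limit is unique. It remains to show the cross-section is smooth. I would use \(\epsilon\)-regularity: an annulus \(\{s<\rho<2s\}\) rescaled has \(|R|\) small and volume ratio close to Euclidean only where the cone is regular. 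So I would instead argue by backward pseudolocality along the Ricci flow, using the fact that \(R\to0\) plus Kähler forces \(|\operatorname{Ric}|\to0\) in an \(L^p\) sense on annuli (Chern--Weil with \(\int R\,\omega^n\) growth from Theorem~\ref{int:polynomial-volume-scalar-mass}). Smoothness of the link then gives \(|Rm|\le C\rho^{-2}\), and the standard argument yields a smooth asymptotic cone with \(C^\infty\) convergence.

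The main obstacle is this last regularity step: excluding singularities of the cross-section of the tangent cone, equivalently proving the curvature bound on the end, from only \(R\to0\). I would first try a Kähler maximum-principle argument on \(\operatorname{Ric}\) along the soliton flow, in the spirit of \cite[Section~4]{LZ}. The quantity \(\Delta_f\operatorname{Ric}+\operatorname{Ric}=Rm*\operatorname{Ric}\), combined with \(\Delta_f R=R-|\operatorname{Ric}|^2\), shows that \(|\operatorname{Ric}|^2\) is dominated by \(R-\Delta_fR\). Integrating this over annuli should give smallness of \(\operatorname{Ric}\), which feeds into the regularity of \cite{LLW}.
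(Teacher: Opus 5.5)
There is a genuine gap. You never give a working mechanism that turns $R_\omega\to0$ into pointwise curvature decay, and that is the entire content of the theorem: once $|\operatorname{Rm}|\to0$ at infinity, \cite{KW,MW} finish the proof.

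\emph{The Ricci lower bound.} The bound $\operatorname{Ric}\ge-C$ that you attribute to ``the algebraic structure of \cite{SZ}'' has no justification. Even if you had it, it would only give $|\operatorname{Ric}|\le R+C$, which is boundedness, not decay.

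\emph{The blow-down step is false.} For $\tilde g=\lambda^{-2}g$ the soliton equation becomes $\lambda^{-2}\operatorname{Ric}(\tilde g)+\nabla^2(\lambda^{-2}f)=\tfrac12\tilde g$. The Ricci term carries a factor $\lambda^{-2}$ and drops out of the limiting equation, so the equation says nothing about the Ricci curvature of the limit. Meanwhile $R\to0$ only gives $\tilde R=\lambda^2R=o(\lambda^2)$ on $\{\rho\sim\lambda\}$. In fact every non-Gaussian shrinker satisfies $R\ge c\,d(p,\cdot)^{-2}$ at infinity \cite{ChowLuYang}; this is why $\Lambda_X>0$ in Corollary~\ref{cor:maximal-scalar-mass-growth}. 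So $\tilde R\ge c'>0$ on the rescaled annuli, and the asymptotic cone of any non-Gaussian AC K\"ahler shrinker is \emph{not} Ricci-flat.

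\emph{Your fallback idea fails for the same reason.} Integrating $\Delta_fR=R-2|\operatorname{Ric}|^2$ over annuli cannot make the rescaled Ricci tensor small, because it is not small.

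\emph{The cited tools do not apply.} \cite{LLW} is a compactness theory for normalized shrinkers, not for blow-downs of a single shrinker toward the singular time. The cone-isolation theorem concerns sequences of shrinkers converging to a Ricci-flat cone, and would force the Gaussian if it applied. Neither gives existence, uniqueness or regularity of the asymptotic cone.

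\emph{A side remark.} Your conclusion $\dim Y=n$ is correct, but the fiber-curvature heuristic is not an argument. It follows from Theorem~\ref{thm:scalar-mass-polynomial}, since the average of $R$ over $B_g(p,r)$ tends to $n-k$. It is not needed in the paper's proof.

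The missing idea is to obtain \emph{metric} control before curvature control. The paper compares the self-similar K\"ahler--Ricci flow $\omega_t=|t|\Phi_t^*\omega$ with a fixed background K\"ahler metric $\omega_{\mathcal A}=\omega_{\rm aff}+\ell^{-1}\Psi^*\omega_{\rm FS}$, built from the Fano-fibration embedding $(\pi,\Psi)$. The steps are:
\begin{enumerate}
\item $R\ge0$ gives $-\rho^2\le\varphi_t\le0$.
\item Bounded $R$ gives $|\log S|\le C\log(1+\rho^2)$, and, via Chern--Lu and a cutoff maximum principle, $\operatorname{tr}_\omega\Psi^*\omega_{\rm FS}\le C(1+\rho^2)^A$.
\item $R\to0$ gives $|\dot\varphi_t|\le\delta\log|t|^{-1}+C_\delta$ away from a compact set.
\item A parabolic Chern--Lu maximum principle, with \cite[Proposition~4.4]{LZ} supplying the lower bound against $\omega_{\rm aff}$, yields $C^{-1}|t|^{\delta}\omega_{\mathcal A}\le\omega_t\le C|t|^{-\delta}\omega_{\mathcal A}$ on a compact neighbourhood of a level set $\Sigma=\{\rho^2=r_0\}$.
\item The local estimates of \cite{SWein} turn this into $|\operatorname{Rm}(\omega_t)|\le C|t|^{-\delta}$ on $\Sigma$. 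By self-similarity this reads $|\operatorname{Rm}_\omega|\lesssim\rho^{-2+2\delta}\to0$, and only then is \cite{KW,MW} invoked.
\end{enumerate}
This route needs neither a Ricci-flat asymptotic cone nor any regularity theory for blow-down limits.
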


The paper is organized as follows.
Section~\ref{sec:uniform-entropy} proves the uniform weighted-volume bound,
deduces weak compactness in fixed dimension, and establishes constancy of
the affine-base dimension under convergence.  Section~\ref{sec:effective-avr-gap}
studies volume growth and integral of scalar-curvature.  It proves the uniform
asymptotic-volume-ratio gap in the maximal volume growth case, the precise
scalar-curvature-mass asymptotics, and the uniform two-sided volume-ratio
bound when the affine base is one-dimensional.
Section~\ref{sec:gaussian-scalar-lifting} proves weighted integrability for pluri-anticanonical sections, followed by a
global H\"ormander lifting construction.  Section~\ref{sec:limit-cone-setup}
uses the results proved in Section \ref{sec:gaussian-scalar-lifting} to prove the cone isolation result. Then as an application, we derive the scalar-curvature gap.
Section~\ref{sec:scalar-decay-ac}
derives estimates comparing the shrinker metric with the ambient background
metric under a bounded scalar-curvature assumption and proves that
scalar-curvature decay implies asymptotic conicality.  Finally,
Section~\ref{sec:filtered-partial-c0-flat} discusses two conjectures and proves a
uniform dimension estimate for pluri-anticanonical sections with finite degree.

\subsectionnotoc{Acknowledgements}
The author thanks Yu Li for helpful discussions and useful suggestions that
improved the paper, and Hanbing Fang for helpful discussions at an early
stage of this work.
\subsectionnotoc{Declaration on the use of AI.}
The main ideas developed in this paper are the author's own. AI tools
(ChatGPT Pro with GPT-5.6) were used to assist in the preparation of the
manuscript. The author takes full responsibility for the paper's content
and correctness.

\section{Uniform entropy and weak compactness}
\label{sec:uniform-entropy}
Using the algebraic formulation of the weighted volume in \cite{SZ},
together with the uniform relative very ampleness \cite{Fujino}, we obtain a
dimension-only lower bound for the weighted volume. The compactness theorem
of Li--Li--Wang \cite{LLW} then yields weak precompactness. We next use the
lifting of holomorphic functions from the limit spaces to establish the
constancy of the dimension of the affine base. 

Recall that a Fano
fibration is a surjective projective morphism \(\pi\colon X\to Y\) of
normal complex algebraic varieties, with \(X\) klt,
\(\pi_*\mathcal O_X=\mathcal O_Y\), and \(-K_X\) \(\pi\)-ample. For a polarized Fano fibration $(\pi\colon X\to Y,\xi)$, an algebraic
weighted volume $\mathbb W(\xi)$ is defined in \cite[Section~5]{SZ}.
When $\xi$ is the soliton vector field of a K\"ahler--Ricci shrinker,
it agrees with the analytic weighted volume defined in
\eqref{eq:weighted-volume-definition}; see
\cite[Proposition~5.9]{SZ}. Therefore the following result implies Theorem \ref{int:uniform-entropy}.

\begin{theorem}[Weighted-volume bound]
\label{thm:uniform-entropy}
Let \((\pi:X^n\rightarrow Y,\xi)\) be a  smooth polarized Fano fibration.  Then
\begin{equation}\label{eq:uniform-weighted-volume}
 \mathbb W(\xi)\geq\frac{1}{N_n^n n!}.
\end{equation}
\end{theorem}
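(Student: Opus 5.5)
Since $\mathbb W(\xi)$ is defined algebraically in \cite[Section~5]{SZ}, the plan is to bound it through sections of a pluri-anticanonical bundle. The intended chain is: a Hilbert-series / equivariant Riemann--Roch expression for $\mathbb W$, then an embedding by sections of $-mK_X$ with $m$ controlled by \cite{Fujino}, and finally a lower bound for the resulting weighted volume in terms of the degree of the image.

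\textbf{Step 1 (algebraic formula).} First recall from \cite{SZ} that
\[
R=\bigoplus_{m\ge0}H^0(X,-mK_X)
\]
carries a torus action generated by $\xi$. For $s>0$ one has the asymptotic formula
\[
\mathbb W(\xi)=\lim_{t\to0^+} t^n\sum_{m}\sum_{\alpha}\dim R_{m,\alpha}\,e^{-t(m+\langle\alpha,\xi\rangle)}
\]
or a comparable index-character expression. The normalization there, with $\langle\xi,\cdot\rangle$ shifted by the moment map, is exactly what produces the factor $e^{-f}$. In the smooth compact case this reduces to
\[
\mathbb W=\frac{1}{n!}\int e^{-\theta_\xi}(-K_X)^n\Big/(2\pi)^n\cdots
\]
The goal is to write $\mathbb W(\xi)$ as a weighted count of lattice points (weights) in the spectrum of $\xi$ on $R$.

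\textbf{Step 2 (effective very ampleness).} Next invoke the uniform relative very ampleness of \cite{Fujino}: for a smooth (or klt) Fano fibration of relative dimension $\le n$, the bundle $-N K_X$ is $\pi$-very ample with $N$ depending only on $n$. I expect the constant $N_n=2^{n+1}(n+1)!\,n(n+1)$ to be exactly the effective constant from that result. This gives a $\xi$-equivariant embedding $X\hookrightarrow \mathbb P^M\times Y$ over the affine variety $Y$. Combined with a $\xi$-equivariant embedding of $Y$ into $\C^K$, it realizes the affine cone
\[
C=\Spec \bigoplus_m H^0(-mNK_X)
\]
as an affine variety with a good torus action. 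Passing from $-K_X$ to $-NK_X$ rescales the grading, and the $t^n$ asymptotics then contribute the factor $N^{-n}$.

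\textbf{Step 3 (lower bound on the weighted count).} For a graded ring generated in degree one (up to the $Y$-directions), with $\xi$ positive on the weights, the Hilbert function satisfies
\[
\dim(\text{degree } \le k \text{ part})\ \ge\ \binom{k+n}{n}
\]
by Noether normalization: there are $n+1$ algebraically independent elements of degree $1$. I would choose these to be $\xi$-homogeneous, which is possible because the torus is reductive. The weight normalization coming from the soliton condition (the minimum of $f$ and $\Delta f+f-|\nabla f|^2=0$) should force the $\xi$-weights of these degree-one generators, measured after the shift, to be at most comparable to $N$. Inserting this into the Laplace transform of Step 1 bounds the sum from below by that of a polynomial ring in $n$ variables with weights $\le N$. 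That produces $\mathbb W\ge 1/(N^n n!)$, the $n!$ coming from $\int_0^\infty\cdots$ of the volume of a simplex.

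\textbf{Main obstacle.} The hardest part is controlling the $\xi$-weights of the generators, which is needed to make Step 3 effective. Irrational $\xi$ and a noncompact $Y$ mean that weights along $Y$ are unbounded, so one must use the canonical shift of the moment map that makes $\mathbb W$ the minimum of the volume functional, as in \cite{SZ}. My first attempt would be to argue that for the minimizing $\xi$, the linearization of the action on the fiber over the vertex of $Y$ has weights in $[0,N]$ on $H^0(\pi^{-1}(o),-NK)$, by the barycenter condition.
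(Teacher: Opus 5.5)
Your outline has the same skeleton as the paper's proof: Fujino's $\pi$-very ampleness of $-N_nK_X$, counting the $\binom{m+n}{n}$ degree-$m$ monomials in $n+1$ sections of $-N_nK_X$, and discarding all other terms by positivity. However, the step that actually produces the constant is missing, and the mechanism you suggest for it would not give it.

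First, the formula to use is the fixed-degree one,
\[
\mathbb W(\xi)=\lim_{k\to\infty}k^{-n}\sum_\alpha\dim R_{k,\alpha}\,e^{-\langle\alpha,\xi\rangle/k}.
\]
Your Step 1 sum over all degrees $m$ with the factor $t^n$ is not this formula. With the canonical lift it even diverges for the Gaussian shrinker when $n\ge 2$, because the frame powers $(\partial_{z_1}\wedge\cdots\wedge\partial_{z_n})^{m}$ have weight $-mn$.

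With the correct formula, a section of $-mN_nK_X$ of weight $w$ contributes exactly $e^{-w/(mN_n)}$. So to reach $1/(N_n^n n!)$ you need all $\binom{m+n}{n}$ monomials to have \emph{nonpositive} weight. Degree-one elements produced by Noether normalization carry no weight information. (Graded Noether normalization is not directly available anyway in the noncompact case, since the degree-zero part $\mathcal O(Y)$ is not a field.) Even granting your hoped-for range $[0,N]$, you would only get $\mathbb W(\xi)\ge e^{-1}/(N_n^nn!)$. No minimization or barycenter property of $\xi$ is needed.

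The missing idea is to build the sections locally at a $\mathbb T$-fixed point $q$ where $f$ attains its minimum.

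\textbf{Choice of sections.} Take an eigensection $s_0\in H^0(X,-N_nK_X)$ with $s_0(q)\ne0$; it exists by relative generation over the affine $Y$ plus equivariance. Take eigensections $s_1,\ldots,s_n$ such that $z_j=s_j/s_0$ are eigen-coordinates at $q$, with $\mathcal L_\xi z_j=\sqrt{-1}\lambda_jz_j$.

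\textbf{Three facts fix every weight.}
\begin{itemize}
\item Equivariance of evaluation at $q$, together with the canonical lift \cite[(5.18)]{SZ}, gives $\operatorname{wt}_\xi(s_0)=N_nf(q)$.
\item The Hamiltonian normal form at a minimum gives $\lambda_j\ge0$.
\item Comparing the induced action on $\partial_{z_1}\wedge\cdots\wedge\partial_{z_n}$ with the canonical lift gives $\sum_j\lambda_j=-f(q)$, hence $f(q)\le0$.
\end{itemize}

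\textbf{Conclusion.} For $|I|\le m$ these give
\[
\frac{\operatorname{wt}_\xi\bigl(s_0^{m-|I|}s_1^{I_1}\cdots s_n^{I_n}\bigr)}{N_nm}=f(q)+\frac{\sum_iI_i\lambda_i}{N_nm}\le\Bigl(1-\frac1{N_n}\Bigr)f(q)\le0 .
\]
These monomials are linearly independent simply because they equal $s_0^mz^I$ with $z$ local coordinates at $q$; this replaces your Noether normalization.

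It is this local weight bookkeeping at the minimum of $f$, not a global bound on generator weights, that makes the count in your Step 3 yield exactly $1/(N_n^nn!)$.
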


\begin{proof}
Here \(Y=\Spec R\) is an polarized affine cone; in the compact case it is a
point.  Let \(\mathbb T\) be the closure of the one-parameter group
generated by \(\xi=J\nabla f\).  The canonical \(\mathbb T\)-action on
\(-K_X\) gives weight decompositions
\[
 H^0(X,-kK_X)=\bigoplus_\alpha R_{k,\alpha}.
\]
Throughout this paper, \(H^0\) denotes algebraic global sections unless otherwise specified. The
characters \(\alpha\) lie in the character lattice of \(\mathbb T\),
viewed inside \(\operatorname{Lie}(\mathbb T)^*\), and the vector spaces \(R_{k,\alpha}\) are
finite-dimensional. 
By \cite[Section 4 and Section 5]{SZ}, we have
\begin{equation}\label{eq:character-formula}
 \mathbb W(\xi)=
 \lim_{k\to\infty}\frac{1}{k^n}
 \sum_\alpha \dim R_{k,\alpha}
 \exp\!\left(-\frac{\langle\alpha,\xi\rangle}{k}\right).
\end{equation}

The algebraic input is a uniform relative very-ampleness from \cite[Corollary~1.4, its proof, and Remark~1.3]{Fujino}:
\begin{equation}\label{eq:uniform-relative-embedding}
        \text{\(-N_nK_X\) is
\(\pi\)-very ample. }
\end{equation}

Let \(q\) be a minimum point of \(f\).  Such a point exists since  \(f\) is proper and bounded below
\cite[Theorem~1.1]{CaoZhou}.
 Since \(Y\) is affine and
\(-N_nK_X\) is \(\pi\)-very ample, relative generation and equivariance
give a \(\mathbb T\)-eigensection
\(s_0\in H^0(X,-N_nK_X)\) with \(s_0(q)\ne0\).
Taking weight components of finitely many sections defining the local
embedding and adjoining \(hs_0\) for finitely many homogeneous \(h\in R\), we obtain eigensections
\(s_1,\ldots,s_n\) such that
\begin{equation}\label{eq:eigen-coordinates}
 z_j:=\frac{s_j}{s_0},\qquad 1\leq j\leq n,
\end{equation}
form a regular system of parameters at \(q\).  Write $\operatorname{wt}_\xi(s_i)$ for the weight of $s_i$ under the $\xi$-action, that is $\mathcal{L}_{\xi}s_i=\operatorname{wt}_\xi(s_i)$.

Write
\begin{equation}\label{eq:coordinate-weights}
 \mathcal L_\xi z_j=\sqrt{-1}\lambda_jz_j.
\end{equation}
The Hamiltonian normal form at
the minimum \(q\) gives \(\lambda_j\geq0\). Moreover as a consequence of \eqref{eq:coordinate-weights}, we have 
\begin{equation}\label{canonical coordinate}
    \mathcal L_\xi
 \left(\frac{\partial}{\partial z_1}\wedge\cdots\wedge
 \frac{\partial}{\partial z_n}\right)
 =-\sqrt{-1}\left(\sum_{i=1}^n\lambda_i\right)
 \left(\frac{\partial}{\partial z_1}\wedge\cdots\wedge
 \frac{\partial}{\partial z_n}\right).
\end{equation}

The canonical lift used in the character formula satisfies
\begin{equation}\label{eq:canonical-lift}
 \mathcal L_\xi\sigma=\sqrt{-1}f(q)\sigma,
 \qquad \sigma\in(-K_X)_q,
\end{equation}
by \cite[Equation~(5.18)]{SZ}.
Comparing \eqref{canonical coordinate} with \eqref{eq:canonical-lift} gives
\begin{equation}\label{eq:weight-budget}
 \lambda_j\geq0,\qquad \sum_{j=1}^n\lambda_j=-f(q),
 \qquad f(q)\leq0.
\end{equation}
Since evaluation at \(q\) is equivariant, \(s_0(q)\neq0\), and
\(\mathcal L=-N_nK_X\), we have
\begin{equation}\label{eq:s0-weight}
 \operatorname{wt}_\xi(s_0)=N_nf(q).
\end{equation}
Since \(s_i=z_is_0\), it follows that
\begin{equation}\label{eq:si-weight}
 \operatorname{wt}_\xi(s_i)=N_nf(q)+\lambda_i.
\end{equation}

For a multi-index \(I=(I_1,\ldots,I_n)\in\mathbb Z_{\geq0}^n\) with
\(|I|\leq m\), define
\begin{equation}\label{eq:monomial-sections}
 \sigma_I:=s_0^{m-|I|}\prod_{i=1}^n s_i^{I_i}
 =s_0^m z^I\in H^0(X,-mN_nK_X).
\end{equation}
The \(\binom{m+n}{n}\) sections are linearly independent: after restricting
near \(q\) and dividing by \(s_0^m\), a relation would be a polynomial relation
among the local coordinates \(z_1,\ldots,z_n\).   By
\eqref{eq:weight-budget}--\eqref{eq:si-weight}, their normalized
weights satisfy
\begin{align}\label{eq:monomial-weight-bound}
 \frac{\operatorname{wt}_\xi(\sigma_I)}{N_nm}
 &=f(q)+\frac{\sum_iI_i\lambda_i}{N_nm}\notag\\
 &\leq f(q)+\frac{\sum_i\lambda_i}{N_n}
 =\left(1-\frac{1}{N_n}\right)f(q)\leq0.
\end{align}
 Thus the
character term in \eqref{eq:character-formula} contributed by such monomials is at least one.
Let \(C_k\) denote the normalized character on the right-hand side of
\eqref{eq:character-formula}.  Positivity of every character term, linear
independence of the \(\sigma_I\), and
\eqref{eq:monomial-weight-bound} give
\begin{align*}
 C_{N_nm}\geq \frac{1}{(N_nm)^n}
   \sum_{|I|\leq m}
   \exp\!\left(-\frac{\operatorname{wt}_\xi(\sigma_I)}{N_nm}\right)\geq \frac{\binom{m+n}{n}}{(N_nm)^n}.
\end{align*}
Letting \(m\to\infty\) gives
\eqref{eq:uniform-weighted-volume}:
\[
 \mathbb W(\xi)=\lim_{m\to\infty}C_{N_nm}
 \geq\frac{1}{N_n^n n!}.
\]
\end{proof}

Theorem~\ref{thm:uniform-entropy} supplies the entropy hypothesis in
\cite[Theorem 1.1, Remark~7.3 and (7.15)]{LLW}, which gives the following
compactness statement.
\begin{corollary}
\label{cor:weak-compactness}
Fix \(n\geq2\). Every sequence of complete smooth
K\"ahler--Ricci shrinkers of complex dimension \(n\), centered at
minimum points of their soliton potentials, has a subsequence
converging in the pointed
\(\widehat C^\infty\)-Cheeger--Gromov sense  to a limit in the sense of \cite[Theorem~1.1]{LLW}. Its
singular set has Minkowski codimension at least four.
\end{corollary}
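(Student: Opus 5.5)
The corollary should follow by feeding Theorem~\ref{thm:uniform-entropy} into the compactness theorem of \cite{LLW}. So the plan is mostly to check that every hypothesis of \cite[Theorem~1.1]{LLW} holds uniformly along the sequence and that our normalization matches theirs.

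First, I will make the normalizations agree. Each shrinker in the sequence is normalized as in \eqref{eq:canonical-shrinker-normalization}. In Riemannian terms this reads $\operatorname{Ric}+\nabla^2 f=\tfrac12 g$ with $R+|\nabla f|^2=f$ (up to the factor conventions relating $\omega$ and $g$), and $\int e^{-f}dV$ is fixed by the choice of additive constant. I will check that this constant matches the one \cite{LLW} uses, namely $\int (4\pi)^{-n}e^{-f}dV=e^{\boldsymbol\mu}$ with real dimension $2n$. If it does not, I will shift $f$ by a constant. That shift changes neither the metric, the minimum set of $f$, nor $\boldsymbol\mu$. The base points $p_i\in\operatorname*{argmin} f_i$ exist by \cite[Theorem~1.1]{CaoZhou}, since $f_i$ is proper and bounded below. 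This also covers the compact shrinkers in the sequence, where existence of the minimum is trivial.

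Second, I will obtain the uniform entropy lower bound. By \cite[Proposition~5.9]{SZ} the analytic weighted volume \eqref{eq:weighted-volume-definition} equals the algebraic one for the polarized Fano fibration $\pi_i:X_i\to Y_i$ constructed in \cite{SZ}. Theorem~\ref{thm:uniform-entropy} then gives $\mathbb W(\xi_i)\ge 1/(N_n^n n!)$. Taking logarithms in \eqref{eq:weighted-volume-definition} gives $\boldsymbol\mu(g_i)\ge -n-n\log N_n-\log(n!)=: -A_n$, a constant depending only on $n$. This is exactly the entropy hypothesis $\boldsymbol\mu\ge -A$ in \cite[Theorem~1.1]{LLW}. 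Completeness and smoothness are given, and \cite{LLW} needs no curvature assumption.

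Third, I will apply \cite[Theorem~1.1]{LLW}. It yields a subsequence converging in the pointed $\widehat C^\infty$-Cheeger--Gromov sense to a limit $(X_\infty,d_\infty,p_\infty)$ whose regular part carries a smooth shrinker structure, with smooth convergence away from the singular set. The Kähler structure passes to the limit: $J_i$ is parallel with uniformly controlled derivatives on compact subsets of the regular part, so $J_i\to J_\infty$ and $\omega_\infty$ is Kähler there; likewise $f_i\to f_\infty$ locally smoothly. For the codimension statement I will invoke \cite[Remark~7.3 and (7.15)]{LLW}, which upgrades the Hausdorff estimate to Minkowski codimension at least four for shrinker limits.

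The main obstacle I expect is bookkeeping rather than mathematics: reconciling the normalization constants between \eqref{eq:weighted-volume-definition} and \cite{LLW}, and confirming that the cited remark gives Minkowski (not merely Hausdorff) codimension four in the Kähler setting with $n\ge2$. If the constant conventions differ, the entropy bound only shifts by an explicit dimensional constant, so the argument is unaffected.
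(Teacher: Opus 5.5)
Your proposal is correct and is the paper's argument: the paper proves the corollary in one line, by feeding the dimension-only lower bound of Theorem~\ref{thm:uniform-entropy} (via the identification of analytic and algebraic weighted volumes) into \cite[Theorem~1.1, Remark~7.3 and (7.15)]{LLW}. One detail in your normalization check: the paper's convention corresponds to $\operatorname{Ric}+\nabla^2 f=g$, not $\tfrac12 g$, so you reach the \cite{LLW} convention with $(2g,\,f+n)$; this leaves the entropy exactly unchanged, since $\mu(g,\tfrac12)=\mu(2g,1)$, rather than merely shifting it by a dimensional constant.
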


 Let
\begin{equation}\label{eq:general-LLW-convergence}
 (X_i,\omega_i,J_i,f_i,p_i)
 \longrightarrow
 (X_\infty,d_\infty,\omega_\infty,J_\infty,f_\infty,p_\infty).
\end{equation}
be pointed \(\widehat C^\infty\)-Cheeger--Gromov convergence in
the sense of \cite[Theorem~1.1 and Remark~7.3]{LLW}.  Thus the whole
spaces converge in the pointed Gromov--Hausdorff sense, while
\((\omega_i,J_i,f_i)\) converge smoothly through compatible embeddings
on compact subsets of
\(\mathcal R\), the regular part of $X_\infty$.  The function \(f_\infty\)
is proper, its sublevel sets are compact, and
\(\mathcal S:=X_\infty\setminus\mathcal R\) has Minkowski
codimension at least four.

The result in \cite{SZ} gives a natural polarized Fano fibration on every
smooth term, and the singular-fibration theorem \cite{HZ} gives the limit fibration.
We use the notation
\begin{equation}\label{eq:general-natural-fibrations}
 \pi_i:X_i\longrightarrow Y_i,
 \qquad
 \pi_\infty:X_\infty\longrightarrow Y_\infty,
\end{equation}to denote the polarized Fano fibrations.

\begin{lemma}[Lifting limit holomorphic functions]
\label{cor:ab-base-function-lifting}
For every regular function  \(h\) on $X_{\infty}$,  there exists regular function
\(h_{i}\) on $X_i$ that converge to \(h\) uniformly on every
fixed pointed metric ball and smoothly on compact subsets of
\(\mathcal R\).
\end{lemma}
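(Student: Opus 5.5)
We argue via weighted $L^2$ theory for $\bar\partial$ on each $X_i$, following the H\"ormander-type gluing used in Donaldson--Sun style partial $C^0$ arguments. The key is to avoid decaying lower bounds on $f_i$ or curvature, and to rely only on the shrinker identity $\operatorname{Ric}(\omega_i)+\sqrt{-1}\partial\bar\partial f_i=\omega_i$. This identity gives strict positivity of the weight $e^{-f_i}$ twisted Laplacian: for the trivial bundle with metric $e^{-f_i}$, the Bochner--Kodaira--Nakano formula for $(0,1)$-forms has curvature term $\operatorname{Ric}+\sqrt{-1}\partial\bar\partial f_i=\omega_i\geq \omega_i$. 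Hence for every $\bar\partial$-closed $(0,1)$-form $\beta$ with $\int|\beta|^2e^{-f_i}<\infty$ there is $u$ with $\bar\partial u=\beta$ and $\int|u|^2e^{-f_i}\leq\int|\beta|^2e^{-f_i}$. The constant is independent of $i$, which is what makes lifting uniform.

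\textbf{Step 1: the limit function.} Since $h$ is regular on $X_\infty$ and $\pi_\infty$ is a Fano fibration over the affine cone $Y_\infty$, we may decompose $h$ into finitely many $\xi_\infty$-weight components. By linearity it suffices to treat a single eigenfunction, $\mathcal L_{\xi_\infty}h=\sqrt{-1}\lambda h$. Regular functions on such fibrations have polynomial growth in $\rho_\infty$, via the weight-degree bound in \cite{SZ,HZ}. So $\int_{X_\infty}|h|^2e^{-f_\infty}<\infty$, and more importantly the tails $\int_{\{f_\infty>A\}}|h|^2e^{-f_\infty}$ are small uniformly in $A$ large.

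\textbf{Step 2: the approximate lift.} Fix a large sublevel region $K_A=\{f_\infty\le A\}$ and a cutoff $\chi$ that vanishes on an $\epsilon$-neighbourhood of $\mathcal S$ and outside $K_A$. Because $\mathcal S$ has Minkowski codimension at least four, we can choose $\chi$ with $\int|\nabla\chi|^2\to0$ as $\epsilon\to0$; codimension $>2$ suffices. Transplant $\chi h$ to $X_i$ by the compatible embeddings of compact subsets of $\mathcal R$, and call the result $\tilde h_i$. Then
\[
\beta_i=\bar\partial\tilde h_i=(\bar\partial\chi)h+\chi\bar\partial_{J_i}h .
\]
The first term is small in weighted $L^2$ by the capacity estimate together with boundedness of $h$ on $K_A$; the cutoff near $\{f=A\}$ is also controlled, using Step 1 tails. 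The second term tends to $0$ as $i\to\infty$ by smooth convergence of $J_i$. Solving $\bar\partial u_i=\beta_i$ as above gives the holomorphic function $h_i=\tilde h_i-u_i$ on $X_i$, with $\|u_i\|_{L^2(e^{-f_i})}\to0$ along a diagonal choice $\epsilon\to0$, $A\to\infty$.

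\textbf{Step 3: regularity and convergence.} Since $h_i$ is holomorphic and close to $h$ in $L^2$, interior estimates on regular regions give smooth convergence on compact subsets of $\mathcal R$. Uniform convergence on metric balls follows from the mean value inequality for the plurisubharmonic function $|h_i-\tilde h_i|^2$ (more precisely $|h_i|^2$ and $|u_i|^2$ on regions where $\tilde h_i$ is holomorphic), together with the uniform Sobolev/non-collapsing available from \cite{LLW} and the entropy bound of Theorem~\ref{thm:uniform-entropy}. Near $\mathcal S$ one uses the $\mathcal S$-independence of mean value on balls and the local gradient estimate for holomorphic functions; to get $C^0$ convergence across $\mathcal S$, one applies the Cheng--Yau gradient bound on slightly larger balls.

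\textbf{Step 4: algebraicity.} Finally, $h_i$ must be a \emph{regular} (algebraic) function, not merely holomorphic. Here we project $h_i$ to its $\xi_i$-weight components with weight close to $\lambda$, by averaging over the torus $\mathbb T_i$; since $\xi_i\to\xi_\infty$, this is compatible with the convergence. A holomorphic function on $X_i$ with finite weighted $L^2$ norm that is a finite sum of $\mathbb T_i$-eigenfunctions has polynomial growth, hence is regular by \cite{SZ}.

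I expect the main obstacle to be Step 4, together with the uniformity in Step 3 across $\mathcal S$. The tori $\mathbb T_i$ may have varying dimension and the eigenvalues may shift, so the projection must use a window of weights rather than an exact weight; the Step 1 weight-degree polynomial bound is what controls that window.
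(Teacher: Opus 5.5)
Your Steps 1--3 follow the paper's proof. The lemma is the case $k=0$ of Proposition~\ref{prop:global-plurianticanonical-lifting}: cut $h$ off in $f_\infty$ and near $\mathcal S$ (Lemma~\ref{lem:codimension-four-cutoffs}), transplant to $X_i$, and correct with H\"ormander's estimate for the weight $e^{-f_i}$, whose curvature is exactly $\omega_i$.

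There are two local differences.
\begin{itemize}
\item \emph{Step~1.} The paper does not get Gaussian integrability of a homogeneous $h$ from polynomial growth on the singular limit. It uses Lemma~\ref{lem:homogeneous-section-weighted-integrability}, integrating $\operatorname{div}(e^{-f}|h|^2\nabla f)=2(\lambda-f)|h|^2e^{-f}$ against the codimension-four cutoffs.
\item \emph{Step~3.} You should drop Cheng--Yau. It needs a Ricci lower bound, and $\operatorname{Ric}(\omega_i)=\omega_i-\sqrt{-1}\partial\bar\partial f_i$ has no uniform lower bound near points converging to $\mathcal S$, so it gives no equicontinuity there. The paper instead uses the value and gradient bounds of Lemma~\ref{lem:fixed-power} (from \cite[Lemma~3.10]{HZ}), whose constants depend only on $n$, $D$ and the entropy bound, followed by Arzel\`a--Ascoli and density of $\mathcal R$. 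In fact the value bound alone suffices: apply it to the holomorphic functions $h_i-h(y)$ on small balls around $x_i\to y\in\mathcal S$, and use continuity of $h$ at $y$.
\end{itemize}

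Your Step~4 does not appear in the paper, and it is a genuine addition rather than an obstacle. The paper takes $\tilde h_i-u_i$ itself as the lift. But a holomorphic function with finite Gaussian norm need not be regular, e.g.\ $e^{z}$ on the Gaussian $\mathbb C$. The paper's later uses (Theorem~\ref{thm:affine-base-dimension-constancy}, Proposition~\ref{prop:localized}) only need holomorphy, but the statement as written needs your projection.

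Your projection goes through. Let $P$ be the $\mathbb T_i$-isotypic projection onto characters with $|\langle\alpha,\xi_i\rangle-\lambda|<\delta$. It is orthogonal for the Gaussian inner product and preserves holomorphy, and
\[
 \|(I-P)h_i\|
 \le\delta^{-1}\bigl\|(-\sqrt{-1}\mathcal L_{\xi_i}-\lambda)\tilde h_i\bigr\|+\|u_i\|
 \longrightarrow0 .
\]
The right side tends to zero for two reasons. First, $(-\sqrt{-1}\mathcal L_{\xi_\infty}-\lambda)(\chi h)=-\sqrt{-1}\,\xi_\infty(\chi)\,h$ is small by the same capacity estimate, since $\xi_\infty$ is tangent to the level sets of $f_\infty$. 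Second, $\xi_i\to\xi_\infty$ smoothly on the support of $\chi$.

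As you say, each isotypic piece is then a holomorphic eigenfunction of polynomial growth, hence regular by \cite{SZ}. Only finitely many characters of the coordinate ring have charge in the window, so $Ph_i$ is regular. Since $(I-P)h_i$ is holomorphic and $L^2$-small, the same local estimates show that $Ph_i$ has the same limit as $h_i$.
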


We defer its proof to Section~\ref{sec:gaussian-scalar-lifting} and
now prove dimension constancy of the affine base uisng this result.

\begin{theorem}
\label{thm:affine-base-dimension-constancy}
Let \(Y_i\) and \(Y_\infty\), as in
\eqref{eq:general-natural-fibrations}, denote the affine bases of the Fano
fibrations associated with the converging sequence in
\eqref{eq:general-LLW-convergence}. Then, for all sufficiently large \(i\),
\[
    \dim_{\mathbb C} Y_i=\dim_{\mathbb C} Y_\infty.
\]
\end{theorem}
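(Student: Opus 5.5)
We prove the two inequalities $\dim Y_i\ge \dim Y_\infty$ and $\dim Y_i\le\dim Y_\infty$ separately, for $i$ large. In both directions we use that $Y=\Spec R$ with $R=H^0(X,\mathcal O_X)$, since $\pi_*\mathcal O_X=\mathcal O_Y$. Thus $\dim_{\mathbb C}Y$ is the transcendence degree, equivalently the maximal number of algebraically independent regular functions on $X$. For the soliton vector field, the dimension can also be read off from the growth of holomorphic functions with polynomial growth along $\rho$.

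\textbf{Lower bound.} Let $k=\dim Y_\infty$ and choose regular functions $h^1,\dots,h^k$ on $X_\infty$ that are algebraically independent. Equivalently, $dh^1\wedge\cdots\wedge dh^k\ne0$ at some point $x$ of the regular part $\mathcal R$; since $\mathcal S$ has codimension at least four and the $h^j$ come from a $k$-dimensional affine base, we may take $x\in\mathcal R$ where the map $(h^1,\dots,h^k)$ is a submersion. By Lemma~\ref{cor:ab-base-function-lifting}, there are regular functions $h_i^j$ on $X_i$ converging to $h^j$ smoothly near $x$. Hence $dh_i^1\wedge\cdots\wedge dh_i^k\neq0$ at the corresponding points $x_i$ for $i$ large. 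Holomorphic functions whose differentials are pointwise independent are algebraically independent: a nonzero polynomial relation $P(h_i)=0$ of minimal degree would give $\sum_j\partial_jP(h_i)\,dh_i^j=0$, which is impossible near $x_i$. Therefore $\dim Y_i\ge k$.

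\textbf{Upper bound.} This is the main obstacle. We argue by contradiction: pass to a subsequence with $\dim Y_i\ge k+1$, and seek $k+1$ independent functions in the limit. The plan is to choose, on each $X_i$, $\xi_i$-homogeneous regular functions $g_i^1,\dots,g_i^{k+1}$ of bounded weight. Bounded weights should be available from an effective generation statement for $R_i$, for instance via the uniform very ampleness of $-N_nK_{X_i}$ and sections $h s_0$ as in the proof of Theorem~\ref{thm:uniform-entropy}, or via a uniform dimension estimate for sections of bounded degree. We normalize them in the weighted $L^2(e^{-f_i})$ norm on a fixed ball, and we normalize the $k+1$-tuple so that the wedge $dg_i^1\wedge\cdots\wedge dg_i^{k+1}$ has size comparable to one on $B(p_i,1)$.

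Bounded weight gives polynomial growth $|g_i|\le C(1+\rho_i)^{d}$ with uniform constants, by the three-circle/frequency monotonicity along the gradient flow of $f_i$. Together with a uniform Cheng--Yau or mean-value gradient estimate on $\mathcal R$, this lets us extract limits $g^j$, which are holomorphic of polynomial growth on $\mathcal R$. These extend across $\mathcal S$ by normality, since $\mathcal S$ has codimension at least four, and hence are regular functions on $X_\infty$ pulled back from $Y_\infty$. The delicate point is nondegeneracy: the normalized wedge must not collapse in the limit. We would handle this with a Gram--Schmidt type normalization in the weighted $L^2$ inner product on the space of functions of weight at most $d$, which is finite dimensional with uniformly bounded dimension. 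The $L^2$ orthonormality, combined with the uniform gradient bounds, then passes to the limit and yields $k+1$ linearly independent functions whose wedge is nonzero somewhere on $\mathcal R$. This contradicts $\dim Y_\infty=k$.

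The decomposition of $\overline{\mathcal{KRS}(n)}$ into the closures $\overline{\mathcal{KRS}(n,k)}$ then follows immediately, since the dimension of the affine base is eventually constant along any convergent sequence.
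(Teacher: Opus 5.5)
Your lower bound is the paper's argument: lift the coordinates of $\pi_\infty$ by Lemma~\ref{cor:ab-base-function-lifting}, keep full rank at a regular point by smooth convergence, and factor through $\pi_i$. The upper bound, however, has a genuine gap, in two places.

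\textbf{Bounded weights.} You assert, but do not prove, that each $X_i$ carries $k+1$ algebraically independent $\xi_i$-homogeneous regular functions of \emph{uniformly} bounded weight. The sources you name do not give this:
\begin{itemize}
\item Relative very ampleness of $-N_nK_{X_i}$ is relative to $\pi_i$. It says nothing about generators of $R_i$ or their weights; the homogeneous $h\in R$ adjoined in the proof of Theorem~\ref{thm:uniform-entropy} are arbitrary.
\item Proposition~\ref{lem:bounded-charge-finiteness} bounds dimensions from above. It does not produce sections.
\end{itemize}
Yet the step is indispensable. By \eqref{eq:limit-section-first-moment} the weight controls the first moment of $f_i$, and without that control normalized functions can lose all their mass at infinity. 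Even for the rank-one base treated in Theorem~\ref{thm:rank-one-volume-coefficient}, the paper bounds the weight only through flatness, a multiplicity--degree inequality and boundedness of smooth Fano $(n-1)$-folds.

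\textbf{Nondegeneracy.} Even granting bounded weights, this step does not work. Weighted $L^2$-orthonormality survives in the limit and gives $k+1$ \emph{linearly} independent functions. But linear independence does not force $dg^1\wedge\cdots\wedge dg^{k+1}\neq0$ (think of $g$ and $g^2$). For $k\geq1$, a $k$-dimensional $Y_\infty$ carries many linearly independent homogeneous functions, so no contradiction arises. Once the $L^2$ norms are fixed, you cannot also normalize the wedge to size one. Rank of differentials is only lower semicontinuous, so it can drop in the limit, and ruling out a wedge that tends to zero is exactly what the theorem must do.

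\textbf{The missing idea.} Do not transport functions from $X_i$ to the limit at all. Instead, use properness of $\pi_\infty$ on the maps $H_i$ you already lifted.
\begin{itemize}
\item Choose $x_\infty\in\mathcal R$ where $dH_\infty$ has rank $d=\dim_{\mathbb C}Y_\infty$. The fiber $H_\infty^{-1}(H_\infty(x_\infty))$ is compact.
\item Hence $|H_\infty-H_\infty(x_\infty)|$ is bounded below on a compact metric annulus about $p_\infty$ enclosing this fiber.
\item Uniform convergence on fixed balls then traps the connected component $C_i$ of $H_i^{-1}(H_i(x_i))$ through $x_i\to x_\infty$ inside a fixed ball, so $C_i$ is compact.
\item Since $Y_i$ is affine, $\pi_i$ contracts compact connected analytic sets, so $C_i\subset\pi_i^{-1}(\pi_i(x_i))$.
\item Take $x_i$ where $dH_i$ has rank $d$ and lying in the generic-fiber locus of $\pi_i$. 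This gives $n-d=\dim_{x_i}C_i\leq n-\dim_{\mathbb C}Y_i$.
\end{itemize}
The case $d=0$ is handled separately: $X_\infty$ is then compact, and hence so is $X_i$ for all large $i$.
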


\begin{proof}
Put \(d=\dim_{\mathbb C}Y_\infty\).  Suppose first that \(d>0\), then 
  apply
Lemma~\ref{cor:ab-base-function-lifting} to the components of $\pi_{\infty}$ and
write \(H_i:X_i\to\mathbb C^N\) for the resulting global holomorphic functions on $X_i$.

Note that there exists a point
\(x_\infty\in\mathcal R\) at which
\(\operatorname{rank}d\pi_\infty=d\).  Smooth convergence on the regular-locus 
then gives \(\operatorname{rank}dH_i=d\) near the corresponding point
for all large \(i\).  Since \(H_i\) factors through \(\pi_i\), this proves
\begin{equation}\label{eq:affine-base-lower-dimension}
 d\leq\dim_{\mathbb C}Y_i.
\end{equation}

For the reverse inequality, pick a point \(x_\infty\in\mathcal R\) with 
\(\operatorname{rank}dH_\infty=d\) and put
\(q=\pi_\infty(x_\infty)\).  Properness of \(\pi_\infty\) makes the 
level sets \(\pi_\infty^{-1}(q)\) compact.   Pointed Gromov--Hausdorff convergence of the space and the convergence of functions transfers this
 to the compactness of the level sets of \(H_i\).
We then choose $x_i$ in both the full-rank locus of \(H_i\) and the full-rank locus of $\pi_i$ such that \(x_i\to x_\infty\).  Put \(q_i=H_i(x_i)\); then \(q_i\to q\).  Let \(C_i\) be a connected
component of the global level \(H_i^{-1}(q_i)\) through \(x_i\).  By the property of Fano fibrations, we have
\(C_i\subset\pi_i^{-1}(\pi_i(x_i))\).  Since
\(\operatorname{rank}dH_i(x_i)=d\), $C_i$ is smooth of local
dimension \(n-d\) at \(x_i\).  The point \(x_i\) lies in the
generic-fiber locus of \(\pi_i\), whose local fiber dimension is
\(n-\dim_{\mathbb C}Y_i\).  Comparing the two germs gives
\[
 n-d\leq n-\dim_{\mathbb C}Y_i,
 \qquad\text{hence}\qquad
 \dim_{\mathbb C}Y_i\leq d.
\]

The case \(d=0\) is immediate. Indeed, \(X_\infty\) is then compact, which
implies that \(X_i\) is compact for all sufficiently large \(i\). Hence
\(
    \dim_{\mathbb C} Y_i=0
\)
for all sufficiently large \(i\).
\end{proof}

\begin{proof}[Proof of Theorem~\ref{int:affine-base-dimension-constancy}]
The dimension equality is Theorem~\ref{thm:affine-base-dimension-constancy}. Combining this with \cite{SZ,LZ}, we obtain the desired disjoint strata of  $\overline{\mathcal{KRS}(n)}$.
\end{proof}

Another application of Theorem \ref{thm:uniform-entropy} is the uniform control of the
local volume ratio on limits of smooth K\"ahler--Ricci shrinkers.  For
\(X\in\overline{\mathcal{KRS}(n)}\) and \(q\in X\), set
\begin{equation}
 \label{eq:int-pointwise-volume-density}
 \Theta_{X,q}:=\lim_{s\downarrow0}
 \frac{\operatorname{Vol}_g(B_d(q,s))}
      {\omega_{2n}s^{2n}}.
\end{equation}

\begin{theorem}
For  every
\(X\in\overline{\mathcal{KRS}(n)}\), and every \(q\in X\),
\begin{equation}
 \label{eq:int-uniform-boundary-volume-density}
 1\geq\Theta_{X,q}
 \geq\frac{e^{-n}}{N_n^n n!}.
\end{equation}
\end{theorem}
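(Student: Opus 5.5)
The plan is to compare the local volume density at \(q\) with the Gaussian-weighted volume, through Perelman-type monotonicity. The upper bound \(\Theta_{X,q}\le 1\) is the standard Bishop--Gromov bound at small scales. For smooth \(X\), \(\Theta\equiv1\). For limits, I would invoke volume convergence in the \(\widehat C^\infty\)-Cheeger--Gromov sense of \cite{LLW} together with the local Bishop--Gromov-type comparison available there (the Ricci curvature is locally bounded below on bounded regions after the shrinker normalization). This gives \(\operatorname{Vol}B_d(q,s)\le \omega_{2n}s^{2n}(1+o(1))\) as \(s\to0\).

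For the lower bound, I would work first on smooth shrinkers and then pass to limits. The first step is to use the entropy bound of Theorem~\ref{thm:uniform-entropy}, namely \(\boldsymbol\mu(g)\ge -n-n\log N_n-\log(n!)\). The key claim is that a lower bound \(\mu(g,\tfrac12)\ge -A\) on a shrinker propagates to \(\mu(g,\tau)\ge -A\) for all \(\tau\in(0,\tfrac12]\). This holds because the shrinker is a self-similar solution of Ricci flow, and Perelman's monotonicity of \(\mu\) along the flow, combined with the scaling invariance \(\mu(cg,c\tau)=\mu(g,\tau)\), gives monotonicity in \(\tau\) on the static metric. I expect this to follow the argument in \cite{LLW} (their no-local-collapsing from entropy).

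The second step is the standard conversion of an entropy lower bound into a volume lower bound. Plug into \(\mathcal W(g,\cdot,\tau)\) a test function that is a cutoff on \(B(x,r)\), with \(\tau\sim r^2\). Since \(|R|\) is controlled on \(B(q,r)\) for small \(r\), Perelman's argument gives \(\operatorname{Vol}B(x,r)\ge c\,r^{2n}\), where \(c\) depends only on the entropy constant. To get the sharp constant \(e^{-n}e^{\boldsymbol\mu+n}=\mathbb W\)-type normalization, rather than merely some \(c\), I would use the Gaussian heat-kernel test function \(u=(4\pi\tau)^{-n}e^{-d^2/4\tau}\) localized and let \(\tau\to0\) after \(r\to0\) at the comparable scale. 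The expected outcome is that the asymptotic density satisfies \(\log\Theta\ge \boldsymbol\mu-\) (a fixed dimensional constant), which yields \(\Theta\ge e^{\boldsymbol\mu}=\mathbb W e^{-n}\ge e^{-n}/(N_n^n n!)\).

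On limit spaces, the natural object is the tangent cone at \(q\), a metric cone \(C(Z)\) with density \(\Theta_{X,q}=\operatorname{Vol}(B_1)/\omega_{2n}\). The lower bound for \(\Theta_{X,q}\) should follow from the lower semicontinuity of the entropy under the convergence. Alternatively, and I believe more robustly, it follows from the fact that the uniform \(\mu(g_i,\tau)\) bound for all small \(\tau\) gives uniform volume ratio bounds \(\operatorname{Vol}B(x,s)\ge \Theta_0\,\omega_{2n}s^{2n}\) on \(X_i\) at all scales \(s\le1\) and all \(x\) in a fixed ball; these pass to the limit by volume continuity. The main obstacle is obtaining the exact constant \(e^{-n}\mathbb W\) rather than an unspecified dimensional constant. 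This requires carefully identifying the Gaussian density of a cone with entropy \(\mu\) in the Perelman test function, including how the \(\tau\)-dependence and the normalization \(\mathbb W=e^{\boldsymbol\mu+n}\) combine. I would first verify it on a flat cone \(\mathbb C^n/\Gamma\), where \(\Theta=1/|\Gamma|\) and the reduced volume equals \(1/|\Gamma|\), to calibrate the constant.
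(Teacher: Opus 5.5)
Your overall plan (entropy lower bound, then a Gaussian competitor centred at $q$, then pass to the limit) can be completed. As written, however, the step that produces the stated constant is missing, and neither mechanism you describe produces it.

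The route you call more robust is Perelman's cutoff argument, giving $\Vol B(x,s)\geq\kappa\,\omega_{2n}s^{2n}$ on $X_i$ for $s\leq1$, followed by volume continuity. It only yields $\Theta_{X,q}\geq\kappa(n)$ for a non-explicit $\kappa(n)$, not $e^{-n}/(N_n^n n!)$. The sharp route is left as an ``expected outcome'' $\log\Theta\geq\boldsymbol\mu-c_n$. That gives $\Theta\geq e^{\boldsymbol\mu}$ only if $c_n=0$, and ``lower semicontinuity of the entropy'' is offered as the mechanism.

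What is missing is that the sharp inequality is just $\mu\leq\mathcal W$ for one explicit competitor on the smooth spaces, so nothing is lost. Take $q_i\to q$ and fix $\tau\in(0,\tfrac12]$. Let $d_i=d_{g_i}(q_i,\cdot)$, $c_{i,\tau}=\int_{X_i}(4\pi\tau)^{-n}e^{-d_i^2/4\tau}\,dV_{g_i}$ and $f_{i,\tau}=d_i^2/4\tau+\log c_{i,\tau}$. Since $|\nabla d_i|=1$ a.e.,
\[
 \boldsymbol\mu(g_i)\leq\mu(g_i,\tau)\leq\mathcal W(g_i,f_{i,\tau},\tau)
 =\tau\int R\,d\nu_{i,\tau}+\int\frac{d_i^2}{2\tau}\,d\nu_{i,\tau}+\log c_{i,\tau}-2n,
\]
where the first inequality is your step one. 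Apart from the scalar term, the right side depends only on $s\mapsto\Vol B(q_i,s)$. At the vertex of any scalar-flat metric cone of density $\Theta$ it equals exactly $\log\Theta$, so no calibration on $\mathbb C^n/\Gamma$ is needed.

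Now let $i\to\infty$ at fixed $\tau$. For this use volume convergence of balls, the bound $0\leq R\leq C(1+d(p_i,\cdot))^2$, and the volume growth bound of \cite{CaoZhou,HM} to control the Gaussian tails. Then let $\tau\to0$, using $\Vol B(q,s)/(\omega_{2n}s^{2n})\to\Theta_{X,q}$. This gives
\[
 \log\Theta_{X,q}\geq\liminf_i\boldsymbol\mu(g_i)\geq-n-n\log N_n-\log(n!).
\]

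Completed this way, your argument is a genuinely different and more elementary route than the paper's. The paper uses Bamler's pointed Nash entropy bound $\mathcal N_q(\tau)\geq\boldsymbol\mu_X$ and continuity of $\mathcal N$ under $\mathbb F$-convergence to the static Ricci-flat tangent cone, where the vertex heat kernel gives $\mathcal N=\log\Theta$. The explicit Gaussian needs only volume convergence and growth bounds. Beyond the existence of the limit defining $\Theta_{X,q}$, it does not even use the cone structure.

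Your justification of $\Theta_{X,q}\leq1$ is wrong. Shrinkers have no local lower Ricci bound that is uniform along a sequence: $\operatorname{Ric}=\omega-\sqrt{-1}\partial\bar\partial f$, and only $R$ and $|\nabla f|$ are controlled on bounded sets. Working without such a bound is the whole point of \cite{LLW}, so Bishop--Gromov is unavailable near singular points. In the paper's framework the upper bound comes instead from $\log\Theta_{X,q}=\lim_{\tau\downarrow0}\mathcal N_q(\tau)$ together with nonpositivity of the pointed Nash entropy, which passes to the limit.
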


\begin{proof}
Let \(X_i\) be smooth K\"ahle--Ricci shrinkers converging to \((X, \omega, f)\) as in \eqref{eq:general-LLW-convergence}. Then global weighted-measure continuity
\cite[Theorem~1.2(c)]{LLW} and uniform estimate on the soliton potential and volume growth of shrinkers, give
\[
 W:=\frac{1}{(2\pi)^n}
    \int_X e^{-f}\frac{\omega^n}{n!}=\lim_{i\to\infty}\mathbb W_{X_i}(\xi_i).
\] Under our normalization, the shrinker
entropy is given by 
\[\boldsymbol\mu_X=\log W \mathbb -n\]  Thus
the pointed Nash-entropy inequality of Bamler
\cite[Proposition~5.2]{BamlerEntropy},
in the singular Ricci-shrinker-space form
\cite[Lemma~D.3 and Equation~(D.3)]{FangLi}, gives
\(\mathcal N_q(\tau)\geq\boldsymbol\mu_X\).
Every parabolic blow-up at \(q\) is its static Ricci-flat tangent cone
\cite{HZ}; continuity of pointed Nash entropy under noncollapsed
\(\mathbb F\)-convergence \cite[Theorem~2.10]{BamlerStructure}
 and the cone vertex heat kernel give
\[
 \log\Theta_{X,q}
 =\lim_{\tau\downarrow0}\mathcal N_q(\tau)
 \geq\boldsymbol\mu_X.
\]
Therefore \(\Theta_{X,q}\geq e^{-n}W\) and then
Theorem~\ref{thm:uniform-entropy} gives the desired inequality.
\end{proof}

\section{Volume growth and scalar-curvature mass}
\label{sec:effective-avr-gap}

We first recall a uniform estimate comparing the volume ratio at finite scales with the asymptotic volume ratio for shrinkers with maximal volume growth, and then use the Duistermaat--Heckman formula to estimate the volume growth and the scalar-curvature mass. A contradiction argument then yields a uniform lower bound for the volume ratio of shrinkers with maximal volume growth.

Applying the non-compact version of the Duistermaat--Heckman formula in \cite{PratoWu} to the soliton vector field directly and then using the relation between the volume growth and the integral of scalar curvature, we obtain the proof of Theorem \ref{int:polynomial-volume-scalar-mass}.

Combining K\"ahler reduction, the one-dimensionality of the affine base, and the multiplicity–degree formula, we prove Theorem~\ref{int:rank-one-volume-coefficient}.

\subsection{Uniform asymptotic volume ratios and volume ratio gap}

Throughout this section we use the complex normalization
\eqref{eq:canonical-shrinker-normalization}.  Put
\[
 \rho^2:=f+n=R_\omega+|\nabla^{1,0}f|^2
\]
and, for \(s>0\),
\begin{equation}\label{eq:potential-volume-scalar-mass}
 \mathsf V(s):=\int_{\{\rho^2<s\}}\frac{\omega^n}{n!},
 \qquad
 \mathsf S(s):=\int_{\{\rho^2<s\}}R_\omega\frac{\omega^n}{n!}.
\end{equation}
The coarea formula, together with
\(\Delta_\omega\rho^2=n-R_\omega\) and
\(|\nabla^{1,0}\rho^2|^2=\rho^2-R_\omega\), gives  \cite{CaoZhou}
\begin{equation}\label{eq:volume-scalar-mass-ode}
 s\mathsf V'(s)-n\mathsf V(s)
 =\mathsf S'(s)-\mathsf S(s).
\end{equation}
We use the normalized convention \cite{CLP}
\[
 \operatorname{AVR}(X,g)
 :=\lim_{r\to\infty}
 \frac{\operatorname{Vol}_g B_g(p,r)}{\omega_{2n}r^{2n}}.
\]

The following result holds more generally for Ricci shrinkers with a
uniform entropy lower bound. For simplicity of notation, we state it only
for K\"ahler--Ricci shrinkers. Its proof follows from the results in \cite{CaoZhou,CLP,LW,WW}.
\begin{proposition}
\label{prop:effective-avr}
Let \((X^n,\omega,J,f,p)\) be a  K\"ahler--Ricci
shrinker with 
\(p\in\operatorname*{argmin}_X f\). For any $\epsilon\in (0,1)$, there are \(r_0=r_0(n)\) and
\(C=C(n,\epsilon)\) such that, for
\(r\geq r_0\),
\begin{equation}\label{eq:effective-avr-estimate}
 0\leq \frac{\mathsf V(r^2/2)}{r^{2n}}
       -\omega_{2n}\operatorname{AVR}(X,g)
 \leq C r^{-1+\epsilon}.
\end{equation}
\end{proposition}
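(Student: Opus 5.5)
The plan is to work entirely with the potential sublevel sets \(\{\rho^2<s\}\) and the ODE \eqref{eq:volume-scalar-mass-ode}, and to compare these sets with metric balls through the Cao--Zhou potential estimates. Integrating \eqref{eq:volume-scalar-mass-ode} gives
\[
 \frac{d}{ds}\Bigl(\frac{\mathsf V(s)}{s^n}\Bigr)=\frac{\mathsf S'(s)-\mathsf S(s)}{s^{n+1}}.
\]
Following \cite{CaoZhou}, one deduces from this that \(\mathsf V(s)\leq Cs^n\) and that \(\mathsf V(s)/s^n\) is almost monotone nonincreasing, with an error controlled by \(\mathsf S(s)/s^{n+1}\). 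This is because \(\mathsf S\geq0\) for shrinkers (\(R>0\) unless flat, by Chen's result), and \(\int s^{-n-1}\mathsf S'\) can be integrated by parts.

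\textbf{Step 1: convergence to \(\omega_{2n}\operatorname{AVR}\).} By \cite{CaoZhou} (and \cite{HM} for the sharp form), \(\tfrac12(d(p,x)-5n)_+^2\le f(x)\le \tfrac12(d(p,x)+\sqrt{2n})^2\) in real normalization. Hence \(B_g(p,r-c_n)\subset\{\rho^2<r^2/2\}\subset B_g(p,r+c_n)\). Since \(\mathsf V(s)\le Cs^n\), the shell volumes satisfy \(\Vol(B(p,r+c)\setminus B(p,r-c))=o(r^{2n})\) in the limit. Thus \(\lim \mathsf V(r^2/2)/r^{2n}=\omega_{2n}\operatorname{AVR}\), which identifies the constant in \eqref{eq:effective-avr-estimate}.

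\textbf{Step 2: the lower bound \(0\leq\cdot\).} I would write
\[
 \frac{\mathsf V(s)}{s^n}-\lim_{t\to\infty}\frac{\mathsf V(t)}{t^n}=\int_s^\infty\frac{\mathsf S(t)-\mathsf S'(t)}{t^{n+1}}\,dt .
\]
Integrating by parts turns the right side into \(\mathsf S(s)/s^{n+1}+\int_s^\infty \mathsf S(t)\bigl(t^{-n-1}-(n+1)t^{-n-2}\bigr)dt\). The boundary terms vanish at infinity because \(\mathsf S(t)\le n\mathsf V(t)\le Ct^n\); this uses \(\int_{\{\rho^2<t\}}R\le n\mathsf V\), which follows from integrating \(\Delta\rho^2=n-R\). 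Every term is nonnegative once \(t\geq n+1\). Choosing \(r_0(n)\) with \(r_0^2/2\geq n+1\) gives the nonnegativity in \eqref{eq:effective-avr-estimate}.

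\textbf{Step 3: the quantitative upper bound.} The same identity bounds the difference by \(C\mathsf S(s)/s^{n+1}+C\int_s^\infty \mathsf S(t)t^{-n-1}dt\). It therefore suffices to prove the scalar-mass bound \(\mathsf S(t)\le C(n,\epsilon)\,t^{n-\frac12+\frac{\epsilon}{2}}\), which gives \(r^{-1+\epsilon}\) after substituting \(s=r^2/2\). This is the main obstacle, and it is where \cite{LW,WW,CLP} enter. The uniform entropy lower bound of Theorem~\ref{int:uniform-entropy} makes the relevant constants dimensional. With it, \cite[Corollary~6.24]{LW} gives uniform integral curvature estimates \(\int_{B(q,1)}|Rm|^{2-\epsilon'}\le C\) on unit balls, in particular \(\int_{B(q,1)}R\le C(n)\). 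I would cover the annulus \(\{t<\rho^2<2t\}\), of width \(\sim1\) in distance, by \(\lesssim t^{n-1/2}\) unit balls; the count comes from the volume bound \(\Vol(\text{annulus})\le Ct^{n-1/2}\) obtained from \eqref{eq:volume-scalar-mass-ode} and \(|\nabla\rho|\le 1\), together with uniform noncollapsing from the entropy bound. Summing dyadically then yields \(\mathsf S(t)\le Ct^{n-1/2}\), up to the \(\epsilon\)-loss coming from the \(L^{2-\epsilon'}\) rather than \(L^2\) integrability, and from \cite{WW}'s use of the relation \(R\,|\nabla f|\)-weighted estimates near \(\{R\approx\rho^2\}\). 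If the covering count is too crude, I would instead use the Cheeger--Naber-type quantitative stratification in \cite{LW}: outside a set of small measure the unit balls are \(\epsilon\)-close to cones, where \(R\) is small. This recovers the same power up to \(r^{\epsilon}\).
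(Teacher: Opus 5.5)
Your Steps 1--2 are correct and amount to the paper's argument. Your integration-by-parts identity is equivalent to the monotonicity of $P(r)=\mathsf V(r^2/2)r^{-2n}-2\mathsf S(r^2/2)r^{-2n-2}$ for $r\ge\sqrt{2n+2}$. Your reduction of the upper bound to $\mathsf S(t)\le C(n,\epsilon)t^{n-\frac12+\frac\epsilon2}$, i.e.\ $\mathsf S(r^2/2)\le Cr^{2n-1+\epsilon}$, is exactly the paper's intermediate estimate.

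\textbf{The gap is in Step 3: your proposed proof of that bound cannot work.} Suppose $\int_{B(q,1)}R\le C$ uniformly on unit balls. Summing over a bounded-overlap cover of $\{\rho^2<t\}$ only gives $\mathsf S(t)\le C\,\Vol\{\rho^2<t+C\sqrt t\}$, which is of order $t^n$ in the maximal-growth case. That is no better than $0\le\mathsf S\le n\mathsf V$, which you already have. Your count hides this in two places:
\begin{itemize}
\item The region $\{t<\rho^2<2t\}$ has distance width $\sim\sqrt t$ and volume up to $\sim t^n$, not $t^{n-1/2}$.
\item If you use width-one shells $\{j<\rho<j+1\}$ instead, there are $\sim\sqrt t$ of them below level $t$, each of volume up to $\sim j^{2n-1}$, and $\sum_{j\le\sqrt t}j^{2n-1}\sim t^n$ again.
\end{itemize}
Any estimate that is uniform over unit balls is blind to the missing factor $t^{-1/2}$. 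The fallback to quantitative stratification ("balls close to cones, where $R$ is small") does not close the gap either. It supplies neither a rate for $R$ on the good balls nor a power-law measure bound on the bad set.

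The missing ingredient is a global curvature estimate that grows two powers slower than the maximal volume. This couples curvature to volume growth in a way no local estimate can. With the uniform entropy bound, \cite[Corollary~6.24]{LW} (as the paper uses it) gives
\[
\int_{B(p,r)}|\operatorname{Rm}|^{2-\epsilon}\,dV\le C_\epsilon r^{2n-2+2\epsilon}\qquad (r\ge1),
\]
not a unit-ball bound. Apply H\"older against the uniform bound $\Vol B(p,r)\le Cr^{2n}$. This yields $\int_{B(p,r)}R\le C_\epsilon r^{2n-1+\epsilon/(2-\epsilon)}$. The inclusion $\{\rho^2<r^2/2\}\subset B(p,r+c_n)$ together with $R\ge0$ then gives $\mathsf S(r^2/2)\le C_\epsilon r^{2n-1+\epsilon}$. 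With this bound in place, your Steps 1--3 go through as written.
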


\begin{proof}By Theorem \ref{thm:uniform-entropy}, we have uniform entropy lower bound for K\"ahler--Ricci shrinkers. With exponent \(\varepsilon\in (0,1)\),
\cite[Corollary~6.24]{LWII} gives
\begin{equation}\label{eq:effective-avr-curvature-integral}
 \int_{B_g(p,r)}|\operatorname{Rm}_g|^{2-\epsilon}\,dV_g
 \leq C_{\epsilon} r^{2n+2\epsilon-2}\qquad(r\geq1).
\end{equation}
The centered uniform volume upper bound \cite{CaoZhou,HM} and H\"older's inequality
then imply
\begin{align}
 \int_{B_g(p,r)}R_\omega\,dV_g
 &\leq C_{\epsilon} r^{2n-1+\frac{\epsilon}{2-\epsilon}}.
 \label{eq:effective-avr-scalar-integral}
\end{align}
Nonnegatiity of the scalar curvature \cite{ChenScalar} and the estimate on soliton potential gives
\begin{equation}\label{eq:effective-avr-X-bound}
 0\leq \mathsf S(r^2/2)\leq C_\epsilon r^{2n-1+\epsilon}
 \qquad(r\geq r_0(n)).
\end{equation}

Let
\begin{equation}\label{eq:effective-avr-P}
 P(r)=\frac{\mathsf V(r^2/2)}{r^{2n}}
      -\frac{2\mathsf S(r^2/2)}{r^{2n+2}}.
\end{equation}
The computation in
\cite{CaoZhou,CLP,WW}
gives
\begin{equation}\label{eq:effective-avr-P-derivative}
 P'(r)=2\mathsf S(r^2/2)r^{-2n-1}
 \left(\frac{2n+2}{r^2}-1\right),
 \qquad
 \lim_{r\to\infty}P(r)
 =\omega_{2n}\operatorname{AVR}(X,g).
\end{equation}

For \(r\geq\sqrt{2n+2}\), the function \(P\) is nonincreasing.
Using \eqref{eq:effective-avr-X-bound} and integrating \(-P'\) gives
\[
 0\leq P(r)-\omega_{2n}\operatorname{AVR}(X,g)
 \leq C_\epsilon\int_r^\infty s^{-2+\epsilon}\,ds
 \leq C_\epsilon r^{-1+\epsilon}.
\]
By \(0\leq \mathsf S\leq n\mathsf V\), we have
\(
    2\mathsf S(r^2/2)r^{-2n-2}=O(r^{-2+\epsilon}).
\)
It then follows from \eqref{eq:effective-avr-P} that
\[
    0\leq \frac{\mathsf V(r^2/2)}{r^{2n}}
    -\omega_{2n}\operatorname{AVR}(X,g)
    \leq C_\epsilon r^{-1+\epsilon}.
\]
\end{proof}

 Let \((X,d,\omega,J,\rho^2)\in \overline{\mathcal KRS(n)}\), and let
\(\pi:X\to Y\) be its natural polarized Fano fibration.  By \cite{HZ}, there are a primitive circle generator \(\eta\), a proper
Hamiltonian \(u\) which is comparable to the square of the
distance function:
\begin{equation}\label{eq:singular-hamiltonian-potential-comparison}
 C^{-1}\rho^2-C\leq u\leq C\rho^2+C.
\end{equation} Moreover we know that there exists a number \(s_0\), a normal projective klt pair \((M,D)\),
a semiample class \(L_0\in\operatorname{Pic}(M)_{\mathbb Q}\) such that every reduction above \(s_0\) is \((M,D)\). Let the reduced ample class be
\begin{equation}\label{eq:singular-high-reduction-class}
 A_s=-(K_M+D)+sL_0,
 \qquad
 k:=\dim_{\mathbb C}Y=\operatorname{nd}(L_0)+1.
\end{equation}

\begin{theorem}
\label{thm:singular-high-level-dh}
We have
\begin{equation}\label{eq:singular-coarea-DH}
 \operatorname{Vol}_g(\{s_0<u<t\})
 =c_\eta\int_{s_0}^{t}A_s^{n-1}\,ds
 \qquad(t\geq s_0),
\end{equation}
and the integrand is a polynomial of degree
\(k-1\) with positive leading coefficient.  Consequently,
\begin{equation}\label{eq:singular-limit-growth-order}
 \operatorname{Vol}_g\{\rho^2<r^2/2\}\asymp r^{2k}.
\end{equation}
\end{theorem}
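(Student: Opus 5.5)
The plan is to compute the volume of the region $\{s_0<u<t\}$ by slicing along the proper Hamiltonian $u$ of the circle action generated by $\eta$, and to identify each slice with a symplectic reduction.

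\textbf{Coarea and reduction.} On the regular part $\mathcal R$, the Liouville measure $\omega^n/n!$ disintegrates along $u$. For a regular value $s>s_0$, the level set $u^{-1}(s)\cap\mathcal R$ is a principal (or orbifold) $S^1$-bundle over the reduced space $M_s=u^{-1}(s)/S^1$. Fubini in the Hamiltonian picture then gives
\[
 \frac{d}{ds}\operatorname{Vol}_g\{u<s\}=\frac{2\pi}{m}\int_{M_s}\frac{\omega_s^{n-1}}{(n-1)!},
\]
where $m$ is the generic isotropy order. Since $\mathcal S$ has Minkowski codimension at least four, it carries no measure and contributes nothing to either side. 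By the result of \cite{HZ}, for $s>s_0$ every reduction is the fixed pair $(M,D)$ with reduced K\"ahler class $A_s=-(K_M+D)+sL_0$; the class is affine in $s$ by the Duistermaat--Heckman theorem, and its normalization is fixed by the shrinker equation. Hence $\int_{M_s}\omega_s^{n-1}/(n-1)!$ equals a constant multiple of $A_s^{n-1}$, and absorbing the constants into $c_\eta$ and integrating from $s_0$ to $t$ gives \eqref{eq:singular-coarea-DH}. Critical values above $s_0$ do not occur, because the reduction type is constant there.

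\textbf{Degree.} Expand
\[
 A_s^{n-1}=\sum_j\binom{n-1}{j}s^j\,L_0^j\cdot\bigl(-(K_M+D)\bigr)^{n-1-j}.
\]
Since $L_0$ is semiample with numerical dimension $\nu=k-1$, we have $L_0^j\equiv0$ for $j>\nu$, so the polynomial has degree at most $k-1$. The coefficient of $s^{\nu}$ is $L_0^{\nu}\cdot(-(K_M+D))^{n-1-\nu}$, up to a binomial factor. Because $L_0$ is semiample, $L_0^{\nu}$ is represented by a positive multiple of a general fiber $F$ of the Iitaka/semiample map. The class $-(K_M+D)+sL_0$ is ample for large $s$, and $L_0|_F$ is trivial, so $-(K_M+D)|_F$ is ample. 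Its top self-intersection on $F$, which has dimension $n-1-\nu$, is therefore positive, and the leading coefficient is positive.

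\textbf{Growth order.} Integrating a polynomial of degree $k-1$ with positive leading coefficient gives $\operatorname{Vol}\{s_0<u<t\}\asymp t^{k}$ for large $t$. The region $\{u\le s_0\}$ is compact, so it has finite volume. By \eqref{eq:singular-hamiltonian-potential-comparison},
\[
 \{u<C^{-1}r^2/2-C\}\subset\{\rho^2<r^2/2\}\subset\{u<Cr^2/2+C\},
\]
which yields $\operatorname{Vol}\{\rho^2<r^2/2\}\asymp r^{2k}$.

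\textbf{Main obstacle.} The hardest step is justifying the Duistermaat--Heckman slicing formula on the singular space $X_\infty$. This requires knowing that the measure-theoretic reduction of the singular level sets agrees with the algebraic reduction $(M,D)$ with class $A_s$, and that $\mathcal S$ does not affect the fiber integrals. I would first try to prove the formula on an exhaustion of $\mathcal R$ by compact sets, using the codimension-four bound to control boundary terms. I would then identify $\int_{M_s\cap\mathcal R}\omega_s^{n-1}$ with the intersection number through the structure theorem of \cite{HZ}, using that the reduced metric is a K\"ahler current in $A_s$ with bounded potentials, so that its Monge--Amp\`ere mass equals $A_s^{n-1}$.
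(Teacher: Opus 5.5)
Your proposal is correct and follows essentially the same route as the paper. The steps line up as follows: coarea on $\mathcal R$; identification of the slice integrals with the mass of the reduced current on $(M,D)$; the Bedford--Taylor mass of that current, which has bounded potentials in $A_s$, equal to $(2\pi)^{n-1}A_s^{n-1}$; positivity of the top coefficient because $L_0$ is semiample of numerical dimension $k-1$ (your general-fibre argument just unpacks the paper's $A_{s_0}^{n-k}L_0^{k-1}>0$); and the sandwich from the $u$--$\rho^2$ comparison. One step should be made explicit, and the paper takes it from \cite{HZ}: the slice integral only sees $Q_s=(u^{-1}(s)\cap\mathcal R)/S^1$, and $M\setminus Q_s$ is a proper analytic subset of codimension at least two, hence not charged by the Bedford--Taylor measure. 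The fact that $\mathcal S$ has volume zero in $X$ does not give this by itself, so your claim that $\mathcal S$ ``contributes nothing to either side'' needs this justification on the right-hand side.
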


\begin{proof}
 Recall that we let $\mathcal{R}$ denote the regular locus of $X$ and its complement is denoted by $\mathcal{S}$.
 The codimension-four estimate gives
\(\operatorname{Vol}_g(\mathcal S)=0\), so the ordinary coarea formula on
\(\mathcal R\) gives, for \(t>s_0\) above all critical values,
\begin{equation}\label{eq:singular-regular-coarea}
 \operatorname{Vol}_g\{s_0<u<t\}
 =\int_{s_0}^{t}
   \left(\int_{u^{-1}(s)\cap\mathcal R}
   \frac{1}{|\nabla u|_g}\,dA_g\right)ds=\frac{1}{(n-1)!}\int_{s_0}^t
   \int_{Q_s}\omega_s^{n-1} ds.
\end{equation}
 where 
\( Q_s:=\bigl(X^s(\eta,s)\cap\mathcal R\bigr)/\mathbb C^*.
\)

Above the biggest critical value every orbit in the invariant singular locus has
complex dimension one.  By
\cite[Theorem~6.6 and its proof]{HZ},
\[
M\setminus Q_s
 =\bigl(X^s(\eta,s)\cap\mathcal S\bigr)/\mathbb C^*
\]
is analytic of complex codimension at least two.  \cite[Proof of Theorem 6.6]{HZ} shows that the reduced K\"ahler current
\(\omega_s\) on \(M\) has continuous, hence locally bounded, potentials.
By pulling back to its resolution, we know that its Bedford--Taylor top product
identifies its total mass with the intersection number:
\begin{equation}\label{eq:singular-BT-intersection-mass}
 \int_{Q_s}\omega_s^{n-1}
 =(2\pi)^{n-1}A_s^{n-1}.
\end{equation}
Combining \eqref{eq:singular-regular-coarea}--\eqref{eq:singular-BT-intersection-mass} proves
\eqref{eq:singular-coarea-DH}.
Put \(k=\operatorname{nd}(L_0)+1\).  Since \(L_0\) is semiample and
\(A_{s_0}\) is ample,
\[
 A_{s_0}^{n-k}L_0^{k-1}>0.
\]
 Thus
\(A_s^{n-1}\) has degree exactly \(k-1\) and positive leading coefficient.
After integration, \eqref{eq:singular-coarea-DH} gives
\(\operatorname{Vol}_g\{u<t\}\asymp t^k\).  The comparison
\eqref{eq:singular-hamiltonian-potential-comparison} between \(u\) and
\(\rho^2\) proves \eqref{eq:singular-limit-growth-order}.
\end{proof}

\begin{theorem}[=Theorem~\ref{int:uniform-avr-gap}]
\label{thm:uniform-avr-gap}
For every \(n\geq1\), there is \(\delta_n>0\) such that every $(X,g,f)\in \mathcal{KRS}(n,n)$ with \(p\in\operatorname*{argmin}_Xf\) satisfies
\[
 \lim_{r\to\infty}
 \frac{\operatorname{Vol}_gB_g(p,r)}{r^{2n}}\geq\delta_n.
\]
\end{theorem}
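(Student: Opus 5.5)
We argue by contradiction, combining the compactness of Corollary~\ref{cor:weak-compactness}, the dimension constancy of Theorem~\ref{thm:affine-base-dimension-constancy}, the effective estimate of Proposition~\ref{prop:effective-avr}, and the growth statement of Theorem~\ref{thm:singular-high-level-dh}. Suppose there is a sequence \((X_i,g_i,f_i,p_i)\in\mathcal{KRS}(n,n)\), with \(p_i\in\operatorname*{argmin}f_i\), such that \(\operatorname{AVR}(X_i,g_i)\to0\).

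First, Corollary~\ref{cor:weak-compactness} lets us pass to a subsequence converging as in \eqref{eq:general-LLW-convergence} to a limit \(X_\infty\). Each \(X_i\) has maximal volume growth, so \(\dim_{\mathbb C}Y_i=n\) by \cite[Proposition~7.2]{LZ}. Theorem~\ref{thm:affine-base-dimension-constancy} then gives \(\dim_{\mathbb C}Y_\infty=n\). By Theorem~\ref{thm:singular-high-level-dh} with \(k=n\), there is a constant \(c>0\) and a radius \(r_1\) such that
\[
 \operatorname{Vol}_{g_\infty}\{\rho_\infty^2<r^2/2\}\geq c\,r^{2n}\qquad(r\geq r_1).
\]

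Next we pass this finite-scale lower bound to the approximating spaces. Fix a large radius \(r\). The sublevel set \(\{\rho^2<r^2/2\}\) lies within a bounded distance of the base point, since \(\rho^2\) is comparable to \(d(p,\cdot)^2/4\) by \cite{CaoZhou}, uniformly in \(i\). Volume converges under the convergence of \cite{LLW}. This can be seen either from the volume continuity under noncollapsed convergence, or from the fact that the singular set has Minkowski codimension four and so carries no volume, combined with smooth convergence of \(f_i\) on the regular part. Hence \(\mathsf V_i(r^2/2)\to\mathsf V_\infty(r^2/2)\), and for \(i\) large (depending on \(r\)) we get \(\mathsf V_i(r^2/2)\geq \tfrac{c}{2}r^{2n}\).

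Finally we use the uniformity of Proposition~\ref{prop:effective-avr}. Fix \(\epsilon=1/2\). For \(r\geq r_0(n)\) it gives
\[
 \omega_{2n}\operatorname{AVR}(X_i,g_i)\geq \frac{\mathsf V_i(r^2/2)}{r^{2n}}-C(n)r^{-1/2},
\]
where the constant depends only on \(n\). Choose \(r\geq\max(r_0,r_1)\) so large that \(C(n)r^{-1/2}<c/4\). Then for large \(i\) we have \(\omega_{2n}\operatorname{AVR}(X_i,g_i)\geq c/4\), which contradicts \(\operatorname{AVR}(X_i,g_i)\to0\). Since \(\operatorname{Vol}B_g(p,r)\) and \(\mathsf V(r^2/2)\) differ by lower-order terms by the potential estimate, this bound is exactly the stated conclusion with some \(\delta_n>0\).

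The main obstacle is the order of quantifiers: the effective estimate must be uniform, so that we can fix one scale \(r\) and only afterwards send \(i\to\infty\). This uniformity relies on Theorem~\ref{thm:uniform-entropy}, which is already built into Proposition~\ref{prop:effective-avr}. A secondary point that needs care is the convergence of the volumes of potential sublevel sets in the \(\widehat C^\infty\) sense. If this is delicate, I would instead use Gromov--Hausdorff volume convergence of metric balls under noncollapsing (Colding/Cheeger--Colding in the shrinker setting of \cite{LLW}) and sandwich the sublevel sets between balls.
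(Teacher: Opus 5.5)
Your proposal is correct and follows essentially the same route as the paper: contradiction, compactness from the uniform entropy bound, dimension constancy to get $\dim_{\mathbb C}Y_\infty=n$, the growth lower bound of Theorem~\ref{thm:singular-high-level-dh} on the limit, convergence of $\mathsf V_i(r^2/2)$ at a fixed scale, and the uniform estimate of Proposition~\ref{prop:effective-avr}. You make explicit the order of quantifiers (fix $r$ depending only on $n$ and the limit, then send $i\to\infty$), which the paper's one-line conclusion leaves implicit; only $\liminf_i\mathsf V_i(r^2/2)\geq\mathsf V_\infty(r^2/2-\eta)$ is actually needed, and this already follows from smooth convergence on $\mathcal R$ together with $\operatorname{Vol}(\mathcal S)=0$.
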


\begin{proof}
Suppose by contradiction that the claimed gap fails.
There are then K\"ahler--Ricci shrinkers \((X_i,g_i)\) for
which
\[
0< b_i:=\lim_{r\to\infty}
 \frac{\operatorname{Vol}_{g_i}B_{g_i}(p_i,r)}{r^{2n}}\rightarrow 0.
\]
   Theorem~\ref{thm:uniform-entropy}
gives a dimension-only entropy lower bound, and
Corollary~\ref{cor:weak-compactness} gives, after passing to a subsequence,
a centered limit \(X_\infty\).  Write
\(\pi_\infty:X_\infty\to Y_\infty\) for its natural polarized Fano
fibration.
By \cite[Proposition~7.1]{LZ} and  Theorem
\ref{thm:affine-base-dimension-constancy}, we have
\begin{equation}\label{eq:maximal-limit-base-dimension}
 \dim_{\mathbb C}Y_\infty=n.
\end{equation}
Theorem \ref{thm:singular-high-level-dh} gives object-dependent
\(c_\infty>0\)  such that
\begin{equation}\label{eq:limit-full-growth-lower}
 \operatorname{Vol}_{g_\infty}
 \{\rho_\infty^2<r^2/2\}
 \geq c_\infty r^{2n}\qquad(r\geq 1).
\end{equation}
However, volume convergence, together with the uniform estimate in
Proposition~\ref{prop:effective-avr}, shows that
\eqref{eq:limit-full-growth-lower} contradicts \(b_i\to 0\).
\end{proof}

\subsection{General volume growth and scalar-curvature mass}
\label{subsec:polynomial-dh-scalar-mass}

Let \((X^n,\omega,J,f)\) be a smooth non-compact K\"ahler--Ricci shrinker, let
\(p\) be a minimum point of \(f\), and let \(\pi:X\to Y\) be its natural
Fano fibration.  We use the same notation
\(\rho^2,\mathsf V,\mathsf S\) as above and put
\(k:=\dim_{\mathbb C}Y\geq 1\).

\begin{lemma}
\label{lem:eventual-dh-polynomial}
There are \(s_0>0\) and a polynomial
\[
 P_X(s)=\sum_{j=0}^k a_js^j,
 \qquad a_k>0,
\]
such that \(\mathsf V(s)=P_X(s)\) for every \(s\geq s_0\).
\end{lemma}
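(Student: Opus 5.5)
Since $\rho^2=f+n$ differs from $f$ by a constant, it is itself a moment map for the soliton field $\xi=J\nabla f$. Indeed, $\iota_\xi\omega=-df$ up to the sign convention, and $\xi$ generates the torus $\mathbb T$, the closure of its flow. The plan is to apply the noncompact Duistermaat--Heckman theorem of Prato--Wu \cite{PratoWu} directly to the Hamiltonian $\mathbb T$-action with the proper moment map $\mu_{\mathbb T}$. Its $\xi$-component is $\rho^2$, which is proper and bounded below by \cite{CaoZhou}.

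First I will check the hypotheses of \cite{PratoWu}. The fixed-point set $X^{\mathbb T}$ equals the critical set of $f$. Because $f$ is proper, this set is compact and lies in $\{\rho^2\le s_*\}$ for some $s_*$. Moreover $\rho^2$ is polar: there is a generic component $\xi$ of the moment map that is proper and bounded below. Prato--Wu then yield the localization formula for the Laplace transform,
\[
 \int_X e^{-t\rho^2}\frac{\omega^n}{n!}
 =\sum_{F\subset X^{\mathbb T}}\int_F\frac{e^{-t\rho^2|_F}\,e^{t\omega}}{e_{\mathbb T}(\nu_F)(t\xi)},
\]
valid for $t>0$. Each term has the form $e^{-tc_F}Q_F(1/t)$, where $c_F=\rho^2(F)$ and $Q_F$ is a rational function of $t$. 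Equivalently, the Duistermaat--Heckman measure $d\mathsf V$, the pushforward of $\omega^n/n!$ under $\rho^2$, is a sum of shifted piecewise-polynomial measures supported on $[c_F,\infty)$. On $(\max_F c_F,\infty)$ each piece is a polynomial density. This happens because the weights of $\xi$ on $\nu_F$ are all nonnegative: $\rho^2$ is bounded below, and by the Hamiltonian normal form the positive directions have positive weights, so each inverse-Laplace piece is a single polynomial on $[c_F,\infty)$.

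Second, I conclude that for $s\ge s_0:=\max_F c_F$ the function $\mathsf V'(s)$ is a polynomial. Hence $\mathsf V(s)=P_X(s)$ is a polynomial there, with a suitable constant absorbed. To identify the degree, I will use \cite[Proposition~7.2]{LZ} (equivalently Theorem~\ref{thm:singular-high-level-dh} in the smooth case), which gives $\mathsf V(s)\asymp s^{k}$. This forces $\deg P_X=k$, and since $\mathsf V>0$ grows, $a_k>0$. Alternatively, the degree and leading coefficient can be read off from Theorem~\ref{thm:singular-high-level-dh} together with the comparison \eqref{eq:singular-hamiltonian-potential-comparison}. I prefer the growth argument because it needs only polynomiality plus two-sided bounds.

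The main obstacle is the first step: justifying the Prato--Wu hypotheses when $\xi$ is irrational, so that $\mathbb T$ has rank greater than one. There the formula is stated for the torus and one must specialize to the generic direction $\xi$; one must also check that the normal weights of $\xi$ at every fixed component are nonzero. They are nonzero because $X^{\mathbb T}=\operatorname{Crit}(f)$, so no normal direction can be fixed. A second issue is converting the Laplace-transform identity into a pointwise polynomial identity beyond $s_0$. I will handle it by uniqueness of the inverse Laplace transform for measures of polynomial growth: the growth bound holds by \cite{CaoZhou}, and it makes the transform finite for all $t>0$.
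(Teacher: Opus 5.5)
Your route is the paper's: Prato--Wu localization applied directly to the possibly irrational $\xi$, a polynomial density on $[c_F,\infty)$ from each fixed component, their sum above the largest critical value, and the degree read off from $\mathsf V(s)\asymp s^k$. Your regularity argument is fine; the observation that $Z(\xi)=X^{\mathbb T}$ forces $\langle w,\xi\rangle\neq0$ is equivalent to the paper's density-of-the-flow remark.

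\textbf{The gap: compactness of the fixed set.} The step you justify incorrectly is compactness of $X^{\mathbb T}=\operatorname{Crit}(f)$, which does not follow from properness of $f$. A proper, bounded-below function can have critical points escaping to infinity, even for torus actions. For example, take a noncompact toric surface whose Delzant polygon is the epigraph of a convex piecewise-linear function with infinitely many breaks. It has a proper, bounded-below generic moment component and infinitely many fixed points. On a shrinker, $\rho^2=R_\omega$ at a critical point, and with no curvature assumption nothing local bounds this. This compactness is exactly what yields finitely many components $F$ and a finite threshold $s_0$, so it must be imported. The paper takes it from \cite[Lemma~3.3 and Proposition~3.5]{SZ}.

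\textbf{A false reason for a true conclusion.} Your argument that each piece is a single polynomial on $[c_F,\infty)$ rests on a false claim. The normal weights of $\xi$ are nonnegative only at the minimum. The Morse--Bott index of $\rho^2$ along $F$ is twice the complex rank of the part of $\nu_F$ with $\langle w,\xi\rangle<0$. For instance, at the maximum component of a compact non-Einstein shrinker all normal weights are negative.

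Fortunately no sign information is needed in one variable. Up to the usual $2\pi$ conventions, the correct normalization is
\[
 \int_X e^{-t\rho^2}\frac{\omega^n}{n!}
 =\sum_F\int_F\frac{e^{\omega-t\rho^2}}{e_{\mathbb T}(\nu_F)(t\xi)}.
\]
With your numerator $e^{t\omega}$, the left side would need a factor $t^n$, and nonnegative powers of $t$ would appear term by term. Expanding $1/e_{\mathbb T}(\nu_F)(t\xi)$ in the Chern roots shows that each term equals $e^{-tc_F}\sum_{m=\operatorname{codim}_{\mathbb C}F}^{n}a_{F,m}t^{-m}$, with real coefficients of either sign. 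Since $m\geq1$, this is the Laplace transform of $\mathbf 1_{[c_F,\infty)}(s)\sum_m a_{F,m}(s-c_F)^{m-1}/(m-1)!$. Uniqueness of the Laplace transform on $(0,\infty)$, your final step, then gives exactly the per-component polynomial densities the paper uses. With these two repairs your proof coincides with the paper's.
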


\begin{proof}
Let \(\mathbb T\) be the compact torus obtained as the closure
of the flow generated by \(\xi=J\nabla f\).  By \cite[Section~3]{SZ}, we can choose the moment-map
\(\mu:X\to\mathrm{Lie}(\mathbb T)^*\),
\begin{equation}\label{eq:soliton-moment-component}
 \rho^2=\langle\mu,\xi\rangle+n.
\end{equation}
The function \(\rho^2\) is proper and bounded below.  Moreover,
\(X^{\mathbb T}=Z(\xi)=\operatorname{Crit}\rho^2\) is compact by
\cite[Lemma~3.3 and Proposition~3.5]{SZ}.  It therefore has finitely many
connected components.

Since the \(\xi\)-flow is dense in
\(\mathbb T\), every nonzero normal weight pairs nontrivially with \(\xi\),
so \(\xi\) is regular in the sense of \cite[Definition 2.1]{PratoWu}.  The noncompact localization formula and its
fixed-component extension therefore apply by
\cite[Theorem~2.2 and the paragraph following Lemma~2.3]{PratoWu}. 
Let \(F\) be a connected component of \(X^{\mathbb T}\), let
\(\lambda_F:=\mu(F)\). 
The calculation in
\cite[Example~1.5 and proof of Theorem~3.2]{PratoWu} identifies  the contribution of
\(F\) to \((\rho^2)_*(\omega^n/n!)\) has the form
\[
 \mathbf 1_{[c_F,\infty)}(s)\,p_F(s-c_F)\,ds
\]
for a polynomial \(p_F\), where \(c_F=\langle\lambda_F,\xi\rangle=\rho^2|_F-n\) . 
Above the largest critical value their sum is one polynomial density, so
\(\mathsf V\) is polynomial there.  The same argument as in Theorem \ref{thm:singular-high-level-dh} gives
\(\mathsf V(s)\asymp s^k\); hence this polynomial has degree \(k\) and
positive leading coefficient.
\end{proof}

\begin{theorem}
\label{thm:volume-growth-limit}For $X\in \mathcal{KRS}(n,k)$, we have
\begin{equation}\label{eq:general-volume-growth-limit}
 0<\Theta_X:=\lim_{r\to\infty}
 \frac{\operatorname{Vol}_gB_g(p,r)}{r^{2k}}<\infty.
\end{equation}
\end{theorem}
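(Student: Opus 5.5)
The plan is to pass from the potential sublevel sets \(\{\rho^2<s\}\), where Lemma~\ref{lem:eventual-dh-polynomial} gives exact polynomial behaviour, to metric balls \(B_g(p,r)\), using the standard two-sided comparison between \(\rho\) and the distance to \(p\). By Lemma~\ref{lem:eventual-dh-polynomial}, for \(s\geq s_0\) we have \(\mathsf V(s)=P_X(s)\) with \(\deg P_X=k\) and leading coefficient \(a_k>0\). Consequently
\[
\lim_{r\to\infty}\frac{\mathsf V(r^2/4)}{r^{2k}}=\frac{a_k}{4^k}\in(0,\infty).
\]
In the same way, for any fixed \(c>0\) the limit of \(\mathsf V((r+c)^2/4)/r^{2k}\) also equals \(a_k/4^k\).

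Next I would invoke the Cao--Zhou distance estimate \cite{CaoZhou}. With \(p\in\operatorname*{argmin}f\) and our normalization, it reads
\[
\tfrac12\bigl(d(p,x)-c_1\bigr)\leq \rho(x)\leq \tfrac12\bigl(d(p,x)+c_2\bigr)
\]
for constants \(c_1,c_2\) depending only on \(n\). One has to check the factor carefully: in the complex normalization \(\operatorname{Ric}+\sqrt{-1}\partial\bar\partial f=\omega\) the Riemannian potential is \(f\) with \(\operatorname{Ric}_g+\nabla^2 f=\tfrac12 g\) up to the usual factor, so the correct multiple of \(d\) must be fixed accordingly. Either way, some constant \(\kappa>0\) and constants \(c_1,c_2\) give
\[
\{\rho^2<\kappa^2(r-c_1)^2\}\subset B_g(p,r)\subset\{\rho^2<\kappa^2(r+c_2)^2\}.
\]
Combining this with the polynomial behaviour squeezes \(\operatorname{Vol}_gB_g(p,r)/r^{2k}\) between two quantities that both tend to \(a_k\kappa^{2k}\). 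Hence the limit exists and equals \(a_k\kappa^{2k}\), which is positive and finite.

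The main point to verify is the two-sided comparison with additive constants only, not multiplicative ones. This is exactly what makes the limit exist rather than merely be comparable to \(r^{2k}\). The comparison follows from \(|\nabla\rho|\leq \tfrac12\) (in Riemannian normalization), which comes from \(|\nabla f|^2\leq f+n\) since \(R\geq0\) \cite{ChenScalar}, together with the lower bound obtained by integrating along minimal geodesics as in \cite{CaoZhou}. A shift \(r\mapsto r\pm c\) does not affect the leading term of a degree-\(k\) polynomial in \(r^2\), so once the correct constant \(\kappa\) is pinned down from the normalization, the rest is routine.
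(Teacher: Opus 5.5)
Your proposal is correct and is essentially the paper's proof: you sandwich \(B_g(p,r)\) between sublevel sets of \(\rho^2\) whose radii differ from \(r\) only by additive constants (the Cao--Zhou/Haslhofer--M\"uller potential estimates), and then squeeze using the eventual polynomial \(P_X\) of Lemma~\ref{lem:eventual-dh-polynomial}. Your displayed inequality \(\rho\approx d/2\) uses the Riemannian normalization; in the paper's complex normalization \(f+n\approx d^2/2\) (see \eqref{eq:uniform-potential-estimates}), so \(\kappa=1/\sqrt2\) and \(\Theta_X=2^{-k}a_k\), which does not affect the conclusion.
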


\begin{proof}
The uniform estimate on the soliton potential \cite{CaoZhou,HM}
gives a constant
\(C\) such that, with
\[
 \Omega(r):=\{2\rho^2<r^2\},
\]
one has
\begin{equation}\label{eq:ball-potential-sandwich}
 \Omega(r-C)\subset B_g(p,r)\subset\Omega(r+C)
 \qquad(r\gg1).
\end{equation}
Lemma~\ref{lem:eventual-dh-polynomial} gives that
\(
 \operatorname{Vol}_g\Omega(r)=P_X(r^2/2)
\) is a polynomial, and 
therefore the conclusion
follows by squeezing. 
\end{proof}

\begin{theorem}
\label{thm:scalar-mass-polynomial}
There is a polynomial
\[
 Q_X(s)=\sum_{j=0}^k b_js^j
\]
such that \(\mathsf S(s)=Q_X(s)\) for every sufficiently large \(s\).  Its
coefficients are determined by
\begin{equation}\label{eq:scalar-mass-coefficient-recurrence}
 b_j=(n-j)a_j+(j+1)b_{j+1},
 \qquad 0\leq j\leq k,
 \qquad b_{k+1}:=0.
\end{equation}
 Moreover,
\begin{equation}\label{eq:scalar-mass-leading-asymptotic}
 \mathsf S(s)=(n-k)\mathsf V(s)+O_X(s^{k-1}),
\end{equation}
and hence
\begin{equation}\label{eq:ball-scalar-average-limit}
 \lim_{r\to\infty}
 \frac{\displaystyle
       \int_{B_g(p,r)}R_\omega\,\frac{\omega^n}{n!}}
      {\operatorname{Vol}_gB_g(p,r)}
 =n-k.
\end{equation}
\end{theorem}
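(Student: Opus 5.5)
The plan is to solve the ODE \eqref{eq:volume-scalar-mass-ode} for \(\mathsf S\) on the range where \(\mathsf V\) is polynomial, using Lemma~\ref{lem:eventual-dh-polynomial}. For \(s\geq s_0\) we have \(\mathsf V(s)=P_X(s)\), so the right-hand side \(g(s):=sP_X'(s)-nP_X(s)=\sum_j (j-n)a_js^j\) is a polynomial of degree at most \(k\). The equation \(\mathsf S'-\mathsf S=g\) is a first-order linear ODE, and its general solution on \([s_0,\infty)\) is \(\mathsf S(s)=Q(s)+Ce^{s}\). Here \(Q\) is the unique polynomial particular solution, and we look for it in the form \(Q=\sum_{j\le k}b_js^j\). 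Matching coefficients of \(s^j\) in \(Q'-Q=g\) gives \((j+1)b_{j+1}-b_j=(j-n)a_j\), which is exactly the recurrence \eqref{eq:scalar-mass-coefficient-recurrence} with \(b_{k+1}=0\). The recurrence is solved downward from \(j=k\), starting with \(b_k=(n-k)a_k\).

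The main step is to show that \(C=0\), and this is where geometric input is needed. Since \(0\le R_\omega\le \rho^2\), because \(\rho^2=R_\omega+|\nabla^{1,0}f|^2\), and \(R_\omega\ge0\) by \cite{ChenScalar}, we get \(0\le\mathsf S(s)\le s\,\mathsf V(s)=sP_X(s)\). This is polynomial growth, so \(Ce^s\) cannot be present and \(C=0\). An alternative bound, \(0\le\mathsf S\le n\mathsf V\), was used in the proof of Proposition~\ref{prop:effective-avr} and also suffices. It follows that \(\mathsf S=Q_X\) for all large \(s\). I should double-check that \(\mathsf V,\mathsf S\) are absolutely continuous (in fact smooth above the largest critical value of \(\rho^2\), by the coarea formula), so that \eqref{eq:volume-scalar-mass-ode} holds pointwise there. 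If needed, I will enlarge \(s_0\) beyond all critical values, which is possible because the critical set is compact.

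For \eqref{eq:scalar-mass-leading-asymptotic}, the key fact is \(b_k=(n-k)a_k\). It gives \(\mathsf S(s)-(n-k)\mathsf V(s)=\sum_{j<k}(b_j-(n-k)a_j)s^j=O_X(s^{k-1})\). For the ball limit \eqref{eq:ball-scalar-average-limit}, I use the sandwich \eqref{eq:ball-potential-sandwich}, \(\Omega(r-C)\subset B_g(p,r)\subset\Omega(r+C)\), together with \(R_\omega\ge0\). This yields \(\mathsf S((r-C)^2/2)\le\int_{B_g(p,r)}R_\omega\le\mathsf S((r+C)^2/2)\), and similarly for the volume. Both \(\mathsf S((r\pm C)^2/2)\) and \(\mathsf V((r\pm C)^2/2)\) are polynomials in \(r\) whose leading terms are \((n-k)a_k r^{2k}/2^k\) and \(a_k r^{2k}/2^k\) respectively, with \(a_k>0\). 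Dividing and letting \(r\to\infty\) gives the limit \(n-k\).
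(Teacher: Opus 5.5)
Your proposal is correct and follows essentially the same route as the paper: solve the linear ODE \eqref{eq:volume-scalar-mass-ode} with polynomial forcing above the threshold of Lemma~\ref{lem:eventual-dh-polynomial}, eliminate the $Ce^{s}$ term using $0\le\mathsf S(s)\le s\,\mathsf V(s)$, match coefficients to get the recurrence with $b_k=(n-k)a_k$, and squeeze the ball averages via \eqref{eq:ball-potential-sandwich} and $R_\omega\ge0$. In the case $k=n$ your ``leading term'' $(n-k)a_k r^{2k}/2^k$ of $\mathsf S$ vanishes, but the expansion $\mathsf S((r\pm C)^2/2)=(n-k)a_k2^{-k}r^{2k}+O(r^{2k-1})$ still holds, so the limit computation is unaffected.
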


\begin{proof}
Above the threshold in Lemma~\ref{lem:eventual-dh-polynomial}, the
right-hand side of \eqref{eq:volume-scalar-mass-ode} is therefore prescribed by a polynomial.  The solutions of
\eqref{eq:volume-scalar-mass-ode} then have the form
\[
 \mathsf S(s)=Q_X(s)+Ce^s,
\]
where \(Q_X\) is polynomial.  Since \(0\leq R_\omega\leq\rho^2<s\) on
\(\{\rho^2<s\}\),
\[
 0\leq\mathsf S(s)\leq s\mathsf V(s)=O_X(s^{k+1}).
\]
This forces \(C=0\).  Comparing the coefficient of \(s^j\) in
\eqref{eq:volume-scalar-mass-ode} gives
 \eqref{eq:scalar-mass-coefficient-recurrence}.  In particular,
\(b_k=(n-k)a_k\), proving
\eqref{eq:scalar-mass-leading-asymptotic}.

 Applying
\eqref{eq:ball-potential-sandwich} to both volume and scalar-curvature mass and using the nonnegativity of the scalar curvature, we obtain \eqref{eq:ball-scalar-average-limit} as a consequence of
  the polynomial asymptotics.
\end{proof}

\begin{corollary}\label{cor:maximal-scalar-mass-growth}
Suppose that \(k=n\).  Then the finite limit
\begin{equation}\label{eq:maximal-scalar-mass-coefficient}
 \Lambda_X:=\lim_{r\to\infty}r^{2-2n}
 \int_{B_g(p,r)}R_\omega\,\frac{\omega^n}{n!}
\end{equation}
exists. Moreover
if the shrinker is nonflat, then \(\Lambda_X>0\).
\end{corollary}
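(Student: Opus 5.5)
When $k=n$, Theorem~\ref{thm:scalar-mass-polynomial} says that for large $s$ the scalar mass $\mathsf S(s)=Q_X(s)$ is a polynomial of degree at most $n$. Its top coefficient is $b_n=(n-n)a_n=0$, so in fact $\deg Q_X\le n-1$. The recurrence \eqref{eq:scalar-mass-coefficient-recurrence} with $j=n-1$ then gives
\[
 b_{n-1}=a_{n-1}+n\,b_n=a_{n-1}.
\]
So $\mathsf S(s)=a_{n-1}s^{n-1}+O(s^{n-2})$.

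Next we transfer this to balls. By \eqref{eq:ball-potential-sandwich} we have $\Omega(r-C)\subset B_g(p,r)\subset\Omega(r+C)$, and $R_\omega\ge0$. Hence
\[
 \mathsf S\bigl((r-C)^2/2\bigr)\le\int_{B_g(p,r)}R_\omega\,\frac{\omega^n}{n!}\le \mathsf S\bigl((r+C)^2/2\bigr).
\]
Both outer quantities equal $a_{n-1}2^{1-n}r^{2n-2}+O(r^{2n-3})$. Therefore the limit exists and
\[
 \Lambda_X=2^{1-n}a_{n-1}=2^{1-n}b_{n-1}.
\]
It is finite, and it is $\ge0$ because $\mathsf S\ge0$.

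It remains to show $\Lambda_X>0$ for a nonflat shrinker; this is the main obstacle. We argue by contradiction and suppose $\Lambda_X=0$, i.e.\ $b_{n-1}=0$.

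We use the ODE \eqref{eq:volume-scalar-mass-ode} together with the $P$-monotonicity from Proposition~\ref{prop:effective-avr}. Define
\[
 P(r)=\frac{\mathsf V(r^2/2)}{r^{2n}}-\frac{2\mathsf S(r^2/2)}{r^{2n+2}}.
\]
Its derivative $P'(r)=2\mathsf S(r^2/2)r^{-2n-1}\bigl((2n+2)r^{-2}-1\bigr)$ is $\le0$ for large $r$, and $P(r)\to\omega_{2n}\operatorname{AVR}$. Integrating from $r$ to $\infty$ gives
\[
 P(r)-\omega_{2n}\operatorname{AVR}=\int_r^\infty 2\mathsf S(t^2/2)\,t^{-2n-1}\bigl(1-(2n+2)t^{-2}\bigr)\,dt.
\]
On the other hand, $\mathsf V(s)=P_X(s)$ is a polynomial, so $P(r)$ is exactly $a_n2^{-n}+a_{n-1}2^{1-n}r^{-2}+\dots$ minus $2\mathsf S/r^{2n+2}$. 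Hence the left side is $O(r^{-2})$ with leading coefficient determined by $a_{n-1}=b_{n-1}$.

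If $b_{n-1}=0$, we peel coefficients using the recurrence. The relations $b_j=(n-j)a_j+(j+1)b_{j+1}$ together with the vanishing of $b_{n-1}$ force a relation among the lower $a_j$. Matching powers in the integral identity should then give $\mathsf S\equiv0$ for large $s$; we expect the terms to vanish successively, using positivity of the integrand $\mathsf S(t^2/2)(1-(2n+2)t^{-2})$.

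Concretely, positivity shows that the $r^{-2j}$ coefficients of $P(r)-\omega_{2n}\operatorname{AVR}$ are controlled by the leading nonzero $b_j$. A lower-degree leading term $b_js^j$ with $b_j>0$ contributes a term of order $r^{2j-2n}$ with a sign that we compare against the polynomial side. I expect the identity to force the top nonzero $b_j$ to vanish, and by induction all $b_j$ to vanish.

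Once $\mathsf S\equiv0$ on $\{\rho^2<s\}$ for large $s$, we get $R_\omega\equiv0$ everywhere. For a shrinker, $R=0$ at some point already forces flatness by the strong maximum principle applied to $\Delta_f R=R-2|\mathrm{Ric}|^2$ (in the complex normalization). Hence $\mathrm{Ric}=0$, so $\nabla^2 f=g$ and $X$ is the Gaussian soliton, contradicting nonflatness.

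The delicate point is the coefficient-matching step. If that step fails, the fallback is a direct argument: nonflatness gives $R>0$ everywhere by the strong maximum principle, with the known lower bound $R\ge c\rho^{-2}$ on nonflat shrinkers (Chow–Lu–Yang). Then
\[
 \mathsf S(s)\ge c\int_{\{\rho^2<s\}}\rho^{-2}\,\frac{\omega^n}{n!}\gtrsim s^{n-1}
\]
by the polynomial volume growth $\mathsf V\asymp s^n$. This would give $b_{n-1}>0$ directly, and I would likely use this route as the primary argument.
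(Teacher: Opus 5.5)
Your computation of $\Lambda_X=2^{1-n}a_{n-1}$ (from $b_n=0$, $b_{n-1}=a_{n-1}$ and the ball/sublevel sandwich) and your positivity argument (the Chow--Lu--Yang bound $R\ge c\rho^{-2}$ outside a compact set combined with $\mathsf V(s)\asymp s^n$), which you designate as primary, are exactly the paper's proof. Drop the coefficient-matching route: the integral identity for $P$ is just the integrated form of \eqref{eq:volume-scalar-mass-ode}, so matching coefficients only reproduces the recurrence \eqref{eq:scalar-mass-coefficient-recurrence}, which holds for arbitrary lower-order data and cannot force any $b_j$ to vanish.
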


\begin{proof}
When \(k=n\), \eqref{eq:scalar-mass-coefficient-recurrence} gives
\(b_n=0\) and \(b_{n-1}=a_{n-1}\).  The 
\eqref{eq:ball-potential-sandwich} gives
\[
 \Lambda_X=2^{1-n}a_{n-1}.
\]
Assume now that \(X\) is nonflat.  By \cite[Theorem~1]{ChowLuYang},
\(
 R_\omega(x)d_g(p,x)^2\geq c_X
\)
outside a compact set and hence $\Lambda_X>0$.
\end{proof}

\begin{proof}[Proof of Theorem~\ref{int:polynomial-volume-scalar-mass}]
The three assertions follow respectively from
Lemma~\ref{lem:eventual-dh-polynomial},
Theorems~\ref{thm:volume-growth-limit} and
\ref{thm:scalar-mass-polynomial}, and
Corollary~\ref{cor:maximal-scalar-mass-growth}.
\end{proof}

\subsection{Uniform two-sided bounds in the minimal volume growth case}
We now turn to the proof of Theorem \ref{int:rank-one-volume-coefficient}. The following eigenvalue gap lemma is well known and follows from the Bochner formula and integration by parts; see
\cite[Proposition~3 and p.~203]{BakryEmery}. The polynomial growth estimates for \(h\) and \(\nabla h\) established in \cite[Lemma~4.2 and Corollary~4.3]{LZ} justify the integration by parts with respect to the weighted measure \(e^{-f}dV_g\).
\begin{lemma}
\label{lem:homogeneous-function-spectral-gap}
If a nonconstant homogeneous holomoprhic function \(h\) satisfies
\(\xi(h)=\sqrt{-1}\alpha h\), then \(\alpha\geq1\).
\end{lemma}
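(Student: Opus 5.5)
The plan is to show that \(h\) is an eigenfunction of the drift Laplacian \(\Delta_f:=\Delta_g-\langle\nabla f,\nabla\,\cdot\,\rangle\) with eigenvalue \(\alpha\), and then apply the Bakry--\'Emery/Lichnerowicz estimate for the weighted measure \(e^{-f}dV_g\). The input for the latter is that the normalization \eqref{eq:canonical-shrinker-normalization} reads, in Riemannian terms, \(\operatorname{Ric}_g+\nabla^2 f=g\). This uses that \(J\nabla f\) is Killing, so \(\nabla^2 f\) is \(J\)-invariant and \(\sqrt{-1}\partial\bar\partial f\) is its associated form.

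\emph{Step 1: the eigenvalue equation.} Since \(\xi=J\nabla f\), we have \(\xi^{1,0}=\sqrt{-1}(\nabla f)^{1,0}\). A holomorphic \(h\) is annihilated by \((0,1)\)-vectors, so
\[
\xi(h)=\sqrt{-1}\,(\nabla f)(h).
\]
The hypothesis \(\xi(h)=\sqrt{-1}\alpha h\) then gives \(\langle\nabla f,\nabla h\rangle=\alpha h\), where \(\alpha\) is real because \(\xi\) generates a compact torus action. A holomorphic function is harmonic for the K\"ahler metric, so \(\Delta_g h=0\). Hence \(\Delta_f h=-\alpha h\). Because \(\Delta_f\) and \(\alpha\) are real, \(u=\operatorname{Re}h\) and \(v=\operatorname{Im}h\) satisfy the same equation, and at least one of them is nonconstant; call it \(u\).

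\emph{Step 2: weighted Bochner.} For \(d\mu=e^{-f}dV_g\), the weighted Bochner identity gives
\[
\tfrac12\Delta_f|\nabla u|^2=|\nabla^2u|^2+\langle\nabla u,\nabla\Delta_f u\rangle+(\operatorname{Ric}+\nabla^2 f)(\nabla u,\nabla u).
\]
Integrate against \(d\mu\) and use \(\operatorname{Ric}_f=g\) and \(\Delta_f u=-\alpha u\). Since \(\int\Delta_f(\cdot)\,d\mu=0\) and \(\int\langle\nabla u,\nabla u\rangle\,d\mu=\alpha\int u^2\,d\mu\), this yields
\[
0=\int|\nabla^2u|^2\,d\mu-\alpha\int|\nabla u|^2\,d\mu+\int|\nabla u|^2\,d\mu .
\]
Therefore \((\alpha-1)\int|\nabla u|^2\,d\mu\geq0\). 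Since \(u\) is nonconstant, \(\int|\nabla u|^2\,d\mu>0\), and so \(\alpha\geq1\). The identity \(\int|\nabla u|^2\,d\mu=\alpha\int u^2\,d\mu\) also shows \(\alpha>0\) a priori.

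\emph{Main obstacle.} The only delicate point is justifying the integrations by parts on a noncompact \(X\), namely that the boundary terms vanish and that all integrals are finite. I would handle this with a cutoff \(\phi(\rho^2/R^2)\) and let \(R\to\infty\). The polynomial growth bounds for \(h\), \(\nabla h\) from \cite[Lemma~4.2 and Corollary~4.3]{LZ} supply the needed control. For \(\nabla^2 h\), I would either use the same lemmas or derive a local estimate from holomorphicity on unit-scale balls. These bounds are then weighed against the Gaussian decay of \(e^{-f}\), since \(f\sim d^2/4\) by \cite{CaoZhou}, together with the polynomial volume growth \cite{CaoZhou,HM}. In the compact case this step is vacuous.
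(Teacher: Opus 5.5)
Your proposal is correct and is essentially the paper's argument: the paper also obtains \(\alpha\geq1\) from the Bakry--\'Emery Bochner formula for \(e^{-f}dV_g\) with \(\operatorname{Ric}_g+\nabla^2f=g\), and justifies the integrations by parts using the polynomial growth of \(h\) and \(\nabla h\) from \cite[Lemma~4.2 and Corollary~4.3]{LZ}. Two small remarks: in this normalization \(f\sim d^2/2\) rather than \(d^2/4\). You also need no a priori control of \(\nabla^2 h\) (the unit-scale local estimate you suggest is not clearly available without curvature assumptions), because testing the Bochner identity against \(\phi_R^2\) and absorbing the cross term \(2\phi_R\nabla^2u(\nabla u,\nabla\phi_R)\) by Cauchy--Schwarz already gives \((1-\alpha)\int\phi_R^2|\nabla u|^2\,d\mu\leq 2\int|\nabla\phi_R|^2|\nabla u|^2\,d\mu\to0\).
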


Let
\[
 B_{n-1}:=\sup\bigl\{(-K_Z)^{n-1}:
 Z\text{ is a smooth Fano }(n-1)\text{-fold}\bigr\},
 \qquad B_0:=1.
\]

\begin{theorem}[=Theorem \ref{int:rank-one-volume-coefficient}]
\label{thm:rank-one-volume-coefficient}
For $X\in \mathcal{KRS}(n,1)$, we have 
\begin{equation}\label{eq:uniform-rank-one-volume-coefficient}
 \frac{\pi(2\pi)^{n-1}}
 {(n-1)!\,nN_n^{n-1}B_{n-1}}
 \leq \Theta_X\leq
 \frac{\pi(2\pi)^{n-1}B_{n-1}}{(n-1)!}.
\end{equation}
\end{theorem}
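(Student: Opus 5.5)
Since \(k=1\), the affine base \(Y\) is a one-dimensional affine cone, hence normal and equal to \(\mathbb C\) with a \(\mathbb T\)-action. The plan is to compute \(\Theta_X\) exactly through the Duistermaat--Heckman description of Theorem~\ref{thm:singular-high-level-dh} (or Lemma~\ref{lem:eventual-dh-polynomial}) and then bound the two factors in the resulting formula separately. Let \(h\) be the coordinate on \(Y=\mathbb C\), pulled back to \(X\). It is a homogeneous holomorphic function with \(\xi(h)=\sqrt{-1}\alpha h\), and Lemma~\ref{lem:homogeneous-function-spectral-gap} gives \(\alpha\geq 1\). The circle generator \(\eta\) acts on \(h\) with weight \(1\). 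The reduction \((M,D)\) at large level is then a generic fiber \(Z=\pi^{-1}(y)\), \(y\neq 0\); equivalently, \(M\) is the fiber of \(\{|h|^2=\text{const}\}/S^1\). Since \(X\) is smooth, \(Z\) is a smooth Fano \((n-1)\)-fold and \(D=0\). Moreover \(L_0\) is numerically trivial because \(\operatorname{nd}(L_0)=k-1=0\). Hence
\[
A_s^{n-1}=(-K_Z)^{n-1}
\]
is constant, and \(\mathsf V(s)\) grows linearly with slope determined by \((-K_Z)^{n-1}\) and the ratio between the \(\xi\)-moment map and the \(\eta\)-moment map.

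Concretely, I would redo the coarea/Duistermaat--Heckman computation directly with \(\rho^2\) in place of \(u\). Away from a compact set, \(\xi\) and \(\alpha\eta\) differ only by a vector field tangent to the fibers. Cutting by the level sets of \(\rho^2\) and reducing by the circle rotating the base, the fiber volume is \((2\pi)^{n-1}(-K_Z)^{n-1}/(n-1)!\), and each unit of \(\rho^2\) contributes a factor \(2\pi/\alpha\) from the circle period. This gives
\[
\mathsf V(s)\sim\frac{2\pi(2\pi)^{n-1}}{\alpha\,(n-1)!}(-K_Z)^{n-1}\,s .
\]
Then \eqref{eq:ball-potential-sandwich} with \(s=r^2/2\) yields
\[
\Theta_X=\frac{\pi(2\pi)^{n-1}(-K_Z)^{n-1}}{\alpha\,(n-1)!}.
\]
The main obstacle is getting this normalization exactly right: the weight \(\alpha\) as the conversion factor between \(\rho^2\) and \(|h|^2\)-type moment maps, and the factor \(1/2\) from \(s=r^2/2\). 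I would check it on \(\mathbb C\times Z\) with the product metric, where \(\alpha=1\).

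For the upper bound, use \(\alpha\geq1\) and \((-K_Z)^{n-1}\leq B_{n-1}\).

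For the lower bound, I need an upper bound \(\alpha\leq nN_n^{n-1}B_{n-1}/(-K_Z)^{n-1}\) (it suffices to show \(\alpha(-K_Z)^{n-1}\leq nN_n^{n-1}B_{n-1}\), or more simply \(\alpha\leq nN_n^{n-1}\)) together with \((-K_Z)^{n-1}\geq 1\). The latter holds because an intersection number of an ample Cartier divisor is a positive integer. For \(\alpha\), use the weight computation from the proof of Theorem~\ref{thm:uniform-entropy}. At the minimum \(q\), the weights \(\lambda_j\geq0\) of the local coordinates satisfy \(\sum\lambda_j=-f(q)\). The function \(h\) vanishes at \(q\) only if \(q\) lies in the central fiber, which it does since the minimum is fixed and \(\pi(q)\) is the cone vertex. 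Expanding \(h\) in the eigen-coordinates \(z_j\), some monomial \(z^I\) appears, so \(\alpha=\sum I_j\lambda_j\). I would bound \(|I|\) by a degree argument: \(h s_0\) is a section of \(-N_nK_X\), which embeds the central fiber in projective space with degree at most \(N_n^{n-1}\) times the anticanonical volume. Together with \(-f(q)\leq n\) (since \(\rho^2\geq0\) gives \(f\geq -n\)), this gives \(\alpha\leq nN_n^{n-1}(-K_Z)^{n-1}\leq nN_n^{n-1}B_{n-1}\). This multiplicity--degree step is the one I am least sure of. I would first try to prove it via the multiplicity of the central fiber \(\{h=0\}\) at \(q\): it is bounded by the degree \(N_n^{n-1}(-K_Z)^{n-1}\) of the fiber under the \(\pi\)-very ample embedding, using flatness so that the central and generic fibers have the same degree.
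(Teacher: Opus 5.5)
Your plan follows the paper's proof. It has the same three ingredients:
\begin{itemize}
\item the exact formula \(\Theta_X=\frac{\pi}{\alpha}\operatorname{Vol}_\omega(Z)=\frac{\pi(2\pi)^{n-1}}{(n-1)!}\cdot\frac{(-K_Z)^{n-1}}{\alpha}\);
\item the upper bound from \(\alpha\geq1\) (Lemma~\ref{lem:homogeneous-function-spectral-gap}) and \((-K_Z)^{n-1}\leq B_{n-1}\);
\item the lower bound from \(\alpha=\sum_jI_j\lambda_j\leq n\operatorname{ord}_p(\pi^*h)\) together with \(\operatorname{mult}_pE\leq\deg_LE=\deg_LZ=N_n^{n-1}(-K_Z)^{n-1}\).
\end{itemize}
The last chain uses flatness of \(\pi\) over \(Y\simeq\mathbb C\) and \(\pi\)-very ampleness of \(L=-N_nK_X\). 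The multiplicity--degree step you were unsure of is exactly the paper's argument, and it is sound.

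There is one bookkeeping slip. The inequality you need is \(\alpha\leq nN_n^{n-1}B_{n-1}(-K_Z)^{n-1}\), with \((-K_Z)^{n-1}\) in the numerator, not the denominator. The bound you actually derive, \(\alpha\leq nN_n^{n-1}(-K_Z)^{n-1}\), gives this at once, and in fact gives the lower bound without the factor \(B_{n-1}\).

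The step that is wrong as written is the normalization of the reduction. Nothing forces the circle generator \(\eta\) to act on \(h\) with weight one.
\begin{itemize}
\item The character \(\chi_h\) of \(h\), restricted to \(\mathbb T\), need not be primitive. For example, the effective circle action \((z,[a:b])\mapsto(\lambda^2z,[a:\lambda b])\) on \(\mathbb C\times\mathbb P^1\) has weight \(2\) on \(z\).
\item In the paper's limiting argument with rational \(\eta\to\xi\), one even has \(m_\eta\to\infty\) whenever \(\xi\) is irrational.
\end{itemize}
With weight \(m_\eta\), the reduction is not \((Z,0)\). It is the orbifold quotient \(M_\eta=Z/\mu_{m_\eta}\) with a branch divisor \(D_\eta\), and \(A_s^{n-1}=(-K_Z)^{n-1}/m_\eta\).

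The paper handles this in three steps:
\begin{itemize}
\item it proves \(c_1(L_\eta)=0\), since \(h\) trivializes \(L_\eta^{\otimes m_\eta}\);
\item it applies the ramification formula \(K_Z=q_\eta^*(K_{M_\eta}+D_\eta)\) to the degree-\(m_\eta\) quotient \(q_\eta\colon Z\to M_\eta\), giving \(\operatorname{Vol}(M_\eta)=\operatorname{Vol}_\omega(Z)/m_\eta\);
\item it notes that the coarea factor \(T_\eta/2=\pi m_\eta/\alpha_\eta\) cancels \(m_\eta\), and then lets \(\eta\to\xi\).
\end{itemize}
Your heuristic that each unit of \(\rho^2\) contributes \(2\pi/\alpha\) is the \(m_\eta=1\) case of this cancellation. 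That is why your final formula is nevertheless correct.

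If you want to keep your direct comparison instead of the limit, two changes are needed. First, compare \(\xi\) with \((\alpha/m)\eta\), not with \(\alpha\eta\). Second, prove that the Hamiltonian of the fiber-tangent difference is bounded on \(X\); your sketch asserts this comparison without argument. It does hold. On each fiber \(Z_y\), \(y\neq0\), this Hamiltonian is the canonically normalized Hamiltonian of an element of \(\ker\chi_h\subset\operatorname{Lie}(\mathbb T)\). Its extreme values are weights on \(-K_{Z_y}\) at fixed points, and all these fibers are equivariantly isomorphic.
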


\begin{proof}
Since \(Y\) is a normal affine cone of dimension one, \(Y\simeq\mathbb C\).
Choose a homogeneous coordinate \(h\) such that
\[
 \xi(h)=\sqrt{-1}\alpha h,\qquad \alpha>0,
\]
and let \(Z=\pi^{-1}(1)\) be a smooth general
fiber.

We first prove
\begin{equation}\label{eq:rank-one-coefficient-formula}
 \Theta_X
 =
 \frac{\pi}{\alpha}\operatorname{Vol}_\omega(Z)
 =
 \frac{\pi(2\pi)^{n-1}}{(n-1)!}
 \frac{(-K_Z)^{n-1}}{\alpha}.
\end{equation}
Take a rational approximation \(\eta\to\xi\), let \(T\) be the minimal
period of its flow, and write
\[
 \eta(h)=\sqrt{-1}\alpha_\eta h.
\]
Introduce the primitive \(2\pi\)-periodic generator
\[
 \widehat\eta:=\frac{T}{2\pi}\eta.
\]
Then
\[
 \widehat\eta(h)=\sqrt{-1}m_\eta h,
 \qquad
 m_\eta:=\frac{T\alpha_\eta}{2\pi}\in\mathbb Z_{>0}.
\]

Let \((M_\eta,D_\eta,L_\eta)\) be a regular K\"ahler reduction for this
primitive \(S^1\)-action, where \(D_\eta\) is the orbifold boundary and
\(L_\eta\) is the orbifold line bundle associated with the weight-one
representation of \(S^1\).  The reduced K\"ahler class satisfies
\[
 \frac{[\omega_\eta]}{2\pi}
 =
 c_1\!\left(-(K_{M_\eta}+D_\eta)+zL_\eta\right).
\]
On \(\pi^{-1}(\mathbb C^*)\), the function \(h\) is nowhere vanishing
and has \(S^1\)-weight \(m_\eta\).  Hence its restriction to the
corresponding level set gives a nowhere-vanishing section of
\(L_\eta^{\otimes m_\eta}\), up to dualizing \(L_\eta\).  Therefore
\[
 c_1(L_\eta)=0
 \quad\text{in }H^2(M_\eta,\mathbb Q),
\]
and consequently
\begin{equation}\label{eq:rank-one-reduced-class}
 [\omega_\eta]
 =
 2\pi c_1\bigl(-(K_{M_\eta}+D_\eta)\bigr).
\end{equation}
Identifying the K\"ahler reduction with the analytic Hilbert quotient, we know \(M_\eta\)
is the orbifold quotient of the slice \(Z\) by
\(\mu_{m_\eta}\).  Thus there is a finite quotient map
\[
 q_\eta:Z\longrightarrow M_\eta,
 \qquad
 \deg q_\eta=m_\eta.
\]
Then the ramification formula gives
\[
 K_Z=q_\eta^*(K_{M_\eta}+D_\eta).
\]
Hence
\[
 (-K_Z)^{n-1}
 =
 m_\eta
 \bigl(-(K_{M_\eta}+D_\eta)\bigr)^{n-1}.
\]
Together with \eqref{eq:rank-one-reduced-class} and adjunction formula, this yields
\begin{equation}\label{eq:rank-one-reduced-volume}
        \operatorname{Vol}_{\omega_\eta}(M_\eta)
 =
 \frac{1}{m_\eta}
 \frac{(2\pi)^{n-1}}{(n-1)!}(-K_Z)^{n-1}=\frac{1}{m_\eta}\operatorname{Vol}_\omega(Z).
\end{equation}
Then using coare formula as in \cite[Equation~(7.8)]{LZ} and \eqref{eq:rank-one-reduced-volume}, we obtain 
\[ \Theta_X
 =
 \lim_{\eta\to\xi}
 \frac{T_\eta}{2}
 \operatorname{Vol}_{\omega_{\eta}}(M_{\eta})= \lim_{\eta\to\xi} \frac{T_\eta}{2m_\eta}\operatorname{Vol}_\omega(Z)
 =
\lim_{\eta\to\xi} \frac{\pi}{\alpha_\eta}\operatorname{Vol}_\omega(Z)=\frac{\pi}{\alpha}\operatorname{Vol}_\omega(Z).
\]
which proves \eqref{eq:rank-one-coefficient-formula}.

It remains to bound \(\alpha\). Note that \(\pi\) is a flat morphism since the base is one dimensional.  Put
\(L=-N_nK_X\), which is \(\pi\)-very ample by
\eqref{eq:uniform-relative-embedding}.  At the fixed minimum
\(p\), let
\[
 E=\operatorname{div}(\pi^*h)
\]
be the central Cartier fiber and set
\(a_p=\operatorname{ord}_p(\pi^*h)\).  Then
\begin{align*}
 a_p
 &=\operatorname{mult}_pE
 \leq \deg_LE
 =\deg_LZ\\
 &=N_n^{n-1}(-K_Z)^{n-1}
 \leq N_n^{n-1}B_{n-1}.
\end{align*}
Here the multiplicity--degree inequality follows from
\cite[Proposition~5.1.9]{Laz1}, while equality of the fiber degrees
follows from flatness.

Choose holomorphic eigen-coordinates centered at \(p\) such that
\[
 \xi(z_j)=\sqrt{-1}\lambda_jz_j,\qquad \lambda_j\geq0.
\]
The weight identity at \(p\) gives
\[
 \sum_{j=1}^n\lambda_j=n-R_\omega(p)\leq n.
\]
If
\(\sum_{|I|=a_p}c_Iz^I
\)
is the first nonzero Taylor term of \(\pi^*h\), then every monomial
with \(c_I\neq0\) has \(\xi\)-weight \(\alpha\) since $h$ and $z_j$ are all homogeneous under the $\xi$-action.  Hence
\[
 \alpha
 =\sum_{j=1}^n I_j\lambda_j
 \leq a_p\sum_{j=1}^n\lambda_j
 \leq na_p
 \leq nN_n^{n-1}B_{n-1}.
\]
On the other hand,
Lemma~\ref{lem:homogeneous-function-spectral-gap} gives
\(\alpha\geq1\). 
Substituting these estimates into
\eqref{eq:rank-one-coefficient-formula} gives
\[
 \frac{\pi(2\pi)^{n-1}}
 {(n-1)!\,nN_n^{n-1}B_{n-1}}
 \leq\Theta_X\leq
 \frac{\pi(2\pi)^{n-1}B_{n-1}}{(n-1)!}.
\]
\end{proof}

\section{Uniform section estimates and global H\"ormander lifting}
\label{sec:gaussian-scalar-lifting}

This section develops the analytic tools used in the cone-isolation
argument.  We first prove uniform local estimates and weighted
integrability for homogenous pluri-anticanonical sections on every element of \(\overline{\mathcal{KRS}(n)}\).  We then
lift pluri-anticanonical sections from a limit space to the approximating
smooth shrinkers and construct the sequential peak sections needed in
Section~\ref{sec:limit-cone-setup}.  

\subsection{Uniform estimates}

Let
\[
 (X,d,\omega,J,f,p)\in\overline{\mathcal{KRS}(n)},
\quad
X=\mathcal R\sqcup  \mathcal S, \quad p\in\operatorname*{argmin}_X f.
\]
All tensorial quantities below are understood on \(\mathcal R\), while
metric balls are taken in \(X\).  Let \(h_{\omega^n}\) be the metric on
\(-K_{\mathcal R}\) induced by \(\omega^n\), and set
\[
 h=e^{-f}h_{\omega^n},
 \qquad
 \omega_{k+1}=(k+1)\omega \quad (k\geq0).
\]
The potential estimates \cite{CaoZhou,HM} and the shrinker equation passing to
the limit \cite{LLW} gives that for every \(D<\infty\), there is \(C_D=C(n,D)\) such
that
\begin{equation}\label{eq:uniform-potential-estimates}
 \frac12\bigl(d(p,x)-5\sqrt{2}n\bigr)_+^2
 \leq f(x)+n
 \leq \frac12\bigl(d(p,x)+\sqrt{2n}\bigr)^2,
 \quad
 R_\omega\leq C_D
 \quad\hbox{on }B(p,D)\cap\mathcal R.
\end{equation}

\begin{lemma}
\label{lem:fixed-power}
 For every \(D<\infty\), there are
\(C_D,c_D,r_D>0\), depending only on \(n\) and \(D\), such that the
following holds for every
\(X\in\overline{\mathcal{KRS}(n)}\).
For \(k\geq0\), \(x\in B(p,D)\), and \(0<r\leq r_D\), every
\[
 s\in H^0(\mathcal R,-kK_{\mathcal R})
\]
with finite \(L^2\)-norm on \(B(x,C_Dr)\cap\mathcal R\) satisfies
\begin{equation}\label{eq:scaled-section-estimate}
 \sup_{B(x,r)\cap\mathcal R}|s|_{h^k}^2
 \leq
 \frac{C_D(kr^2+1)}{((k+1)r^2)^n}
 \int_{B(x,C_Dr)\cap\mathcal R}|s|_{h^k}^2\,
       \omega_{k+1}^n.
\end{equation}
At \(\rho_k=c_D(k+1)^{-1/2}\), one also has
\[
 \sup_{B(x,\rho_k)\cap\mathcal R}
 \left(|s|_{h^k}^2+
 |\nabla s|_{\omega_{k+1}\otimes h^k}^2\right)
 \leq C_D\int_{B(x,C_D\rho_k)\cap\mathcal R}|s|_{h^k}^2
       \omega_{k+1}^n.
\]
Moreover, \(|s|_{h^k}\) extends uniquely to a locally Lipschitz function
on \(X\).
\end{lemma}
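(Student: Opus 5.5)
The estimate will come from a Bochner-type subsolution inequality for $|s|^2_{h^k}$ on $\mathcal R$, followed by a uniform local Moser iteration (or mean value inequality) at scale $r\le r_D$. The iteration will rest on two ingredients: the uniform Sobolev/volume-doubling control on $B(p,D)$ supplied by the entropy bound of Theorem~\ref{thm:uniform-entropy} via \cite{LLW}, and the fact that $\mathcal S$ has Minkowski codimension at least four.

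\textbf{Curvature of $h$ and the subsolution inequality.} The curvature of $h=e^{-f}h_{\omega^n}$ on $-K_{\mathcal R}$ is $\operatorname{Ric}(\omega)+\sqrt{-1}\partial\bar\partial f=\omega$ by \eqref{eq:canonical-shrinker-normalization}. Hence $h^k$ has curvature $k\omega$. For a holomorphic section $s$,
\[
\Delta_\omega |s|^2_{h^k}=|\nabla s|^2-k\,n\,|s|^2_{h^k}\cdot(\text{normalized trace})\geq -kn|s|^2_{h^k}.
\]
This inequality involves no curvature of $\omega$, only the curvature of the bundle. It will be rewritten with respect to $\omega_{k+1}=(k+1)\omega$; the extra factor $k+1$ accounts for the $((k+1)r^2)^{-n}$ volume normalization in \eqref{eq:scaled-section-estimate}.

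\textbf{The $L^\infty$--$L^2$ bound \eqref{eq:scaled-section-estimate}.} On $B(x,C_Dr)$, with $x\in B(p,D)$, I will use the following inputs.
\begin{itemize}
\item A uniform Sobolev inequality together with volume comparison $\operatorname{Vol}B(y,t)\asymp t^{2n}$ for $t\le r_D$. The lower bound comes from the noncollapsing given by the entropy bound; the upper bound comes from $\operatorname{Ric}$ being bounded below locally, via $R\le C_D$ and the potential estimates \eqref{eq:uniform-potential-estimates}. Alternatively, both can be taken from the heat kernel bounds of \cite{LLW}.
\item Cutoff functions $\phi_\epsilon$ near $\mathcal S$ with $\int|\nabla\phi_\epsilon|^2\to0$. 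These exist because $\mathcal S$ has codimension at least four, so integration by parts on $\mathcal R$ is justified.
\end{itemize}
Moser iteration applied to $u=|s|^2_{h^k}$ with $\Delta u\ge -knu$ then yields
\[
\sup_{B(x,r)}u\le C(1+kr^2)r^{-2n}\int_{B(x,C_Dr)}u\,\omega^n .
\]
This is exactly \eqref{eq:scaled-section-estimate} once we convert $\omega^n=(k+1)^{-n}\omega_{k+1}^n$.

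\textbf{Gradient bound at scale $\rho_k$.} At $r=\rho_k=c_D(k+1)^{-1/2}$ the factor $kr^2+1$ is bounded, which gives the $C^0$ part. For the gradient, I will use the Bochner formula for $|\nabla s|^2$. It contains $\operatorname{Ric}$ terms, which are controlled by $R\le C_D$ only in the Kähler–Ricci shrinker combination $\operatorname{Ric}=\omega-\sqrt{-1}\partial\bar\partial f$, and the Hessian of $f$ is not a priori bounded. I expect this to be the main obstacle.

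To get around it, I would first rescale by $k+1$ and view the rescaled ball as a unit ball in $\omega_{k+1}$. There $\sqrt{-1}\partial\bar\partial f$ becomes small, and $\nabla f$ is controlled since $|\nabla f|^2\le f+n$. I would then run a Cheng–Yau/Moser argument on $|\nabla s|^2$ using the $\partial\bar\partial$-structure: $\Delta|\nabla s|^2\ge |\nabla\nabla s|^2-C|\nabla s|^2$ after absorbing the Hessian of $f$ through an integral bound. The integral bound comes from $\int|\nabla^2 f|^2$ being controlled by integrating $\Delta f=n-R$ against cutoffs, which is an $L^2$ Hessian bound. Combined with the codimension-four cutoffs, this gives a weighted $L^2$ gradient estimate via a Caccioppoli inequality from $\Delta u\ge|\nabla s|^2-knu$, and then an $L^\infty$ bound on the smaller ball.

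\textbf{Lipschitz extension.} The gradient bound makes $|s|_{h^k}$ locally Lipschitz on $\mathcal R$ with uniform constants near each point. Because $\mathcal R$ is dense, and $X$ is a length space in which $\mathcal R$ is path-geodesically dense (the codimension-four singular set does not disconnect geodesics, by \cite{LLW}), the function extends uniquely to a locally Lipschitz function on $X$.
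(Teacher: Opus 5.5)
\textbf{Verdict: genuine gap.} Your scheme breaks at the gradient bound, and the linear factor you aim for in \eqref{eq:scaled-section-estimate} is not provable.

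\textbf{The paper's route.} The paper does not prove these bounds directly. It quotes the value and gradient estimates of \cite[Lemma~3.10]{HZ}, which hold on singular K\"ahler--Ricci shrinkers with centers anywhere in $X$. Their constants depend only on $n$, $D$ and an entropy lower bound, which Theorem~\ref{thm:uniform-entropy} makes uniform. The paper then specializes to $r\sim(k+1)^{-1/2}$ and proves the Lipschitz extension exactly as in your last step.

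\textbf{The gradient bound.} Your attempt to reprove the cited lemma by elliptic Moser iteration fails at the very obstacle you flag. In the Bochner inequality for $|\nabla s|^2$, the term $\operatorname{Ric}(\omega)=\omega-\sqrt{-1}\partial\bar\partial f$ acts as a potential $V\sim|\nabla^2 f|$. Moser iteration for $\Delta v\ge -Vv$ in real dimension $2n$ needs a uniform bound on $\|V\|_{L^q}$ for some $q>n$, or uniform smallness of the critical $L^n$-norm on small balls.
\begin{itemize}
\item An $L^2$ bound on $\nabla^2 f$ is exactly critical when $n=2$ and too weak when $n\ge 3$.
\item Nothing in your inputs gives the needed smallness near $\mathcal S$.
\item Rescaling by $k+1$ cannot help: the critical norm is scale invariant, $\nabla^2 f$ is not pointwise bounded, and for $k=0$ there is no rescaling at all.
\item An integral bound does not yield the pointwise inequality $\Delta|\nabla s|^2\ge|\nabla\nabla s|^2-C|\nabla s|^2$ that you write down.
\end{itemize}
So the passage from the Caccioppoli $L^2$ bound on $\nabla s$ to an $L^\infty$ bound is unsupported. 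That $L^\infty$ bound is precisely what the Lipschitz extension and the later equicontinuity arguments use. What is missing is a way to control the Ricci term without such bounds on $\nabla^2 f$; the paper gets this from \cite{HZ}.

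\textbf{The $C^0$ step near $\mathcal S$.} This step is also incomplete. Since $\nabla\vartheta_\nu$ is controlled only in $L^q$ with $q<4$, the cutoff errors in the iteration have the form $\int\phi^2|\nabla\vartheta_\nu|^2|s|_{h^k}^{2(p+1)}$. These are not controlled by $|s|_{h^k}\in L^2$ alone, so you first need some a priori local boundedness of $|s|_{h^k}$ across $\mathcal S$. Separately, $R\le C_D$ gives no local Ricci lower bound. The volume upper bound therefore has to come from the shrinker estimates, as in your alternative.

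\textbf{The linear factor $(kr^2+1)$.} Moser iteration does not produce this factor. For $\Delta u\ge-\lambda u$, the $L^1$ mean-value inequality in real dimension $2n$ carries $(1+\lambda r^2)^n$, and this power cannot be lowered. Take the Gaussian shrinker with $\omega$ Euclidean, $f=\tfrac12|z|^2-n$, $x=p=0$, and the section $s=(\partial_{z_1}\wedge\cdots\wedge\partial_{z_n})^{\otimes k}$. Then $|s|^2_{h^k}=c_k e^{-k|z|^2/2}$, so the left side of \eqref{eq:scaled-section-estimate} equals $c_k$. The right side is at most $C_D\,n!\,(2\pi)^n c_k(kr^2+1)(kr^2)^{-n}$, which tends to $0$ as $k\to\infty$ with $r$ fixed once $n\ge 2$.

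Thus the displayed estimate holds only with $(kr^2+1)^n$ in place of $(kr^2+1)$, or equivalently only for $kr^2\lesssim 1$; no argument can give the linear form. This does not affect the natural-scale estimate at $\rho_k$. Nor does it affect the later uses of the lemma, which involve only that scale or a fixed $k$.
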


\begin{proof}
The value and gradient estimates follow from \cite[Lemma~3.10]{HZ}, which
is stated for singular K\"ahler--Ricci shrinkers and allows the center
\(x\) to lie anywhere in \(X\).  Its constants depend only on \(n,D\), and
an entropy lower bound, which is uniform here by
Theorem~\ref{thm:uniform-entropy}.  Taking \(r\) comparable to
\((k+1)^{-1/2}\) gives the natural-scale estimate.
Finally, \(|\nabla|s||\leq|\nabla s|\) on \(\mathcal R\), and the distance
induced by smooth curves in \(\mathcal R\) agrees with \(d\)
\cite[Theorem~1.1]{LLW}.  Hence the gradient estimate gives the asserted
locally Lipschitz extension.
\end{proof}

The next cutoff lemma is standard and follows from the Minkowski codimension 4 estimate and it allows us integrate by parts on the regular locus
without boundary terms along \(\mathcal S\).

\begin{lemma}[Codimension-four cutoffs]
\label{lem:codimension-four-cutoffs}
For every
\((X,d,\omega,J,f,p)\in\overline{\mathcal{KRS}(n)}\), there are smooth
functions \(\vartheta_\nu:\mathcal R\to[0,1]\) which vanish near
\(\mathcal S\), converge to \(1\) on compact subsets of \(\mathcal R\),
and satisfy
\begin{equation}\label{eq:LLW-cutoffs}
 \|\nabla\vartheta_\nu\|_{L^q(B(p,D)\cap\mathcal R)}
 \longrightarrow0
 \qquad\text{for every }D<\infty\text{ and }1\leq q<4.
\end{equation}
\end{lemma}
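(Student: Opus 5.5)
The plan is the standard capacity-type construction, carried out directly from the Minkowski codimension-four bound on \(\mathcal S\). Write \(r_{\mathcal S}(x):=d(x,\mathcal S)\). Fix \(D<\infty\). Minkowski codimension at least four means that for every \(\epsilon>0\) there is \(C_{D,\epsilon}\) with
\[
 \operatorname{Vol}\bigl(\{x\in B(p,2D):\ r_{\mathcal S}(x)<r\}\bigr)\leq C_{D,\epsilon}\,r^{4-\epsilon}\qquad(0<r\leq1).
\]
This is exactly the form of the estimate in \cite[Theorem~1.1]{LLW}. Fix a smooth \(\phi:\mathbb R\to[0,1]\) with \(\phi=0\) on \((-\infty,1]\), \(\phi=1\) on \([2,\infty)\), and \(|\phi'|\leq2\). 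For a small scale \(r\), put \(\tilde\vartheta_r:=\phi(r_{\mathcal S}/r)\). This is Lipschitz on \(X\), vanishes on the \(r\)-neighbourhood of \(\mathcal S\), and equals \(1\) where \(r_{\mathcal S}\geq2r\). Since \(d\) is induced by curves in \(\mathcal R\) \cite[Theorem~1.1]{LLW}, \(r_{\mathcal S}\) is \(1\)-Lipschitz on \(\mathcal R\). Hence \(|\nabla\tilde\vartheta_r|\leq 2r^{-1}\) almost everywhere, and the gradient is supported in the annulus \(\{r\leq r_{\mathcal S}\leq 2r\}\).

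The key estimate then follows directly:
\[
 \int_{B(p,D)\cap\mathcal R}|\nabla\tilde\vartheta_r|^q
 \leq 2^q r^{-q}\operatorname{Vol}\{r_{\mathcal S}<2r\}\cap B(p,D)
 \leq C\,r^{4-\epsilon-q}.
\]
For \(q<4\), choose \(\epsilon<4-q\); the bound then tends to \(0\) as \(r\to0\). The choice of \(\epsilon\) depends on \(q\), but it can be made for each \(q\) separately, so a single sequence \(r_\nu\to0\) works for all \(q\in[1,4)\) and all \(D\) simultaneously.

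Two technical points remain.

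\emph{Smoothness.} \(\tilde\vartheta_r\) is only Lipschitz. I would mollify it on \(\mathcal R\), for instance by replacing \(r_{\mathcal S}\) with a smooth function \(\tilde r\) satisfying \(\tfrac12 r_{\mathcal S}\leq\tilde r\leq r_{\mathcal S}\) and \(|\nabla\tilde r|\leq 2\). Such a function exists by a standard partition-of-unity regularization of distance functions on the open manifold \(\mathcal R\). This changes only the constants. Alternatively, one can mollify \(\tilde\vartheta_r\) locally and control the \(L^q\) gradient error.

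\emph{Convergence on compact sets.} Any compact \(K\subset\mathcal R\) has \(r_{\mathcal S}\geq c_K>0\), since \(\mathcal S\) is closed. So \(\vartheta_\nu=1\) on \(K\) once \(2r_\nu<c_K\), and each \(\vartheta_\nu\) vanishes near \(\mathcal S\).

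The only step I consider a genuine obstacle is ensuring that the Minkowski estimate holds uniformly on balls \(B(p,D)\) of every radius, rather than only for the tangent-cone singular stratum. I would take this from \cite[Theorem~1.1 and (7.15)]{LLW}, applied on each ball separately. If \(\mathcal S\) is empty (the smooth case), \(\vartheta_\nu\equiv1\) trivially.
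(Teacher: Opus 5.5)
Your proposal is correct and is essentially the paper's proof: a one-variable cutoff of $d(\cdot,\mathcal S)/s$, combined with the Minkowski bound $\operatorname{Vol}(\{d(\cdot,\mathcal S)<s\}\cap B(p,D))\le C_{D,\delta}s^{4-2\delta}$ from \cite{LLW}, gives $\int|\nabla\vartheta_s|^q\le C s^{4-q-2\delta}\to0$, followed by smoothing on $\mathcal R$. The only difference is cosmetic: you regularize the distance function once, so a single sequence $r_\nu\to0$ serves every $q<4$ and $D$, whereas the paper smooths afterwards and diagonalizes over integer radii and $q_j\nearrow4$.
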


\begin{proof}
The Minkowski codimension estimate
\cite[Theorem~1.1(a) and the proof of Proposition~8.2]{LLW} gives, for
every \(D<\infty\) and \(0<\delta<1\), constants
\(C_{D,\delta}\) and \(s_{D,\delta}>0\) such that
\begin{equation}\label{eq:LLW-Minkowski}
 \Vol_g\!\left(
   \{d(\,\cdot\,,\mathcal S)<s\}\cap B(p,D)
 \right)
 \leq C_{D,\delta}s^{4-2\delta}
 \qquad(0<s<s_{D,\delta}).
\end{equation}
Fix \(q<4\) and choose \(0<\delta<(4-q)/2\).  A one-variable cutoff of
\(d(\,\cdot\,,\mathcal S)/s\) then satisfies
\[
 \int_{B(p,D)\cap\mathcal R}|\nabla\vartheta_s|^q\,\omega^n
 \leq C_{D,q}s^{-q}
       \Vol_g\!\left(
          \{d(\,\cdot\,,\mathcal S)<2s\}\cap B(p,D+1)
       \right)
 \leq C_{D,q}s^{4-q-2\delta}\longrightarrow0.
\]
Then the lemma follows by smoothing on \(\mathcal R\), diagonalize over integer radii and $q_j\nearrow 4$. 
\end{proof}

\subsection{Gaussian charges and weighted integrability}

We first record the Gaussian identities on a smooth
\((X,\omega,J,f,p)\in\mathcal{KRS}(n)\).  As always, \(H^0\)
denotes algebraic global sections.  Equip \(-K_X\) with
\(h=e^{-f}h_{\omega^n}\).  Then
\begin{equation}\label{eq:gaussian-polarization}
 \sqrt{-1}\Theta(h)
 =\operatorname{Ric}(\omega)+\sqrt{-1}\partial\bar\partial f
 =\omega.
\end{equation}
For every \(k\geq0\), set
\begin{equation}\label{eq:gaussian-inner-products}
 \langle s,t\rangle_{G,k}
 :=\int_X\langle s,t\rangle_{h^k}
       e^{-f}\frac{\omega^n}{n!},
 \qquad s,t\in H^0(X,-kK_X),
\end{equation}
where \(h^0=1\).  For \(k=0\), this is the Gaussian inner product on
holomorphic functions.

Let \(\mathbb T\) be the compact closure of the soliton flow and use its
canonical lift to anticanonical powers, as in \eqref{eq:canonical-lift}.  On
each algebraic character space, define the charge operator by
\begin{equation}\label{eq:gaussian-charge-operator}
 A:=-\sqrt{-1}\,\mathcal L_\xi,
 \qquad
 As=\langle\alpha,\xi\rangle s
 \quad\hbox{for }s\in H^0(X,-kK_X)_\alpha.
\end{equation}
The torus preserves \(f,\omega\), and \(h\).  Its character spaces are
therefore orthogonal for \eqref{eq:gaussian-inner-products}, and \(A\) is
self-adjoint on every
finite torus-invariant Gaussian subspace.

\begin{lemma}
\label{lem:homogeneous-section-weighted-integrability}
Let
\(X\in\overline{\mathcal{KRS}(n)}\),
\(k\in\mathbb Z_{\geq0}\), and
\[
 s\in H^0\!\left(X,\mathcal O_X(-kK_X)\right)
\]
be homogeneous for the canonical lift of the soliton torus.  Then
\begin{equation}\label{eq:limit-section-weighted-integrability}
 \int_{\mathcal R}|s|_{h^k}^2e^{-\tau f}\omega^n<\infty
 \qquad\text{for every }\tau>0.
\end{equation}
If \(s\ne0\) and \(As=\lambda s\), where
\(A=-\sqrt{-1}\mathcal L_\xi\), then
\begin{equation}\label{eq:limit-section-first-moment}
 \frac{\int_{\mathcal R}f|s|_{h^k}^2e^{-f}\omega^n}
      {\int_{\mathcal R}|s|_{h^k}^2e^{-f}\omega^n}
 =\frac{\lambda}{k+1}.
\end{equation}
\end{lemma}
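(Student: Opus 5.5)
The plan is to prove the first-moment identity \eqref{eq:limit-section-first-moment} as a weighted Bochner/virial identity, and to obtain the integrability \eqref{eq:limit-section-weighted-integrability} from the same computation applied with cutoffs.

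\textbf{The smooth case.} Here the computation is explicit. Put \(\phi:=|s|_{h^k}^2\). Since \(\sqrt{-1}\Theta(h^k)=k\omega\) and \(s\) is holomorphic, \(\phi\) is plurisubharmonic modulo curvature, and one has
\[
 \Delta\phi=|\nabla^{1,0}s|^2-nk\,\phi .
\]
Homogeneity, together with the canonical lift \eqref{eq:canonical-lift}, gives the key pointwise relation. Write \(\nabla^{1,0}f\) for the holomorphic gradient, so that \(\xi=J\nabla f\). Holomorphy of \(s\) means \(\bar\partial\)-derivatives vanish, and the canonical lift identifies \(\mathcal L_\xi\) with \(\nabla_\xi\) corrected by the zeroth-order term coming from \(f\). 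From this one gets
\[
 \langle\nabla\phi,\nabla f\rangle=2\bigl(\lambda-kf\bigr)\phi
\]
up to normalization constants, which will be fixed so that they match \eqref{eq:s0-weight}. A cleaner route is to compute \(\Delta_f\phi=\Delta\phi-\langle\nabla f,\nabla\phi\rangle\) and integrate against \(e^{-f}\omega^n\). Using \(\Delta_f f=n-f\), which is the second equation in \eqref{eq:canonical-shrinker-normalization}, we integrate \(\phi\,\Delta_f f\) by parts to get
\[
 \int\phi(n-f)e^{-f}=-\int\langle\nabla\phi,\nabla f\rangle e^{-f}.
\]
We then substitute the homogeneity relation. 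The \(f\)-terms collect into \((k+1)\int f\phi e^{-f}\), and the remaining terms give \(\lambda\int\phi e^{-f}\) (the \(n\)-term cancels against the \(nk\)-type shift in the canonical lift weight). This yields \eqref{eq:limit-section-first-moment}. I would first check this on the Gaussian \(\mathbb C^n\) with \(s=z^I(\partial_z)^{\otimes k}\) to fix constants.

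\textbf{Integrability.} This has to come first, to justify the integration by parts. On a smooth shrinker \(s\) is algebraic, so by the polynomial growth estimates (as in \cite[Lemma~4.2]{LZ}) and \eqref{eq:uniform-potential-estimates}, \(\phi\) grows at most polynomially in \(\rho\). Since the volume growth is polynomial by Theorem~\ref{thm:volume-growth-limit}, \(e^{-\tau f}\) gives finiteness. On a limit space I would argue differently, with no algebraic growth bound available. Consider \(\Phi_\tau(t):=\int_{\{f<t\}}\phi\,e^{-\tau f}\). The identity above, localized to sublevel sets of \(f\) and with the codimension-four cutoffs of Lemma~\ref{lem:codimension-four-cutoffs} removing boundary terms along \(\mathcal S\), gives a differential inequality. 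The boundary term on \(\{f=t\}\) is \(\int\langle\nabla\phi,\nabla f\rangle/|\nabla f|\), which the homogeneity relation controls by \((\lambda-kt)\)-times the flux of \(\phi\). Consequently \(\frac{d}{dt}\bigl(\int_{\{f=t\}}\phi|\nabla f|\bigr)\) is comparable to a linear-in-\(t\) multiple of the same quantity. A Gronwall argument then shows that the level-set integrals of \(\phi\) grow at most like \(t^{C(\lambda,k,n)}\). Integrating against \(e^{-\tau t}\) gives \eqref{eq:limit-section-weighted-integrability}. Local boundedness of \(\phi\) from Lemma~\ref{lem:fixed-power} handles the compact part, and its Lipschitz extension makes the level-set arguments legitimate, since the sublevel sets of \(f\) are compact.

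\textbf{Main obstacle.} The hardest step is the integration by parts on the singular space. The terms \(\langle\nabla\phi,\nabla\vartheta_\nu\rangle\) are controlled using the gradient bound of Lemma~\ref{lem:fixed-power}, which gives \(|\nabla s|\) locally bounded, together with \(\|\nabla\vartheta_\nu\|_{L^1}\to0\). Since \(q=1<4\) is allowed in \eqref{eq:LLW-cutoffs}, these terms vanish in the limit. Making the polynomial-growth Gronwall step rigorous without algebraic input is the second delicate point. If it fails, I would instead lift \(s\) to nearby smooth shrinkers via the approximation scheme and pass the identity to the limit using uniform bounds.
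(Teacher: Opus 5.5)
Your first-moment computation uses the same mechanism as the paper: integrate the charge relation against $e^{-f}\omega^n$ and use the cutoffs of Lemma~\ref{lem:codimension-four-cutoffs} along $\mathcal S$. However, the integrability step, which you rightly treat as logically prior, has a genuine gap.

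\textbf{The gap.} Your Gronwall argument works with the unweighted flux $I(t)=\int_{\{f=t\}}\phi|\nabla f|$. Its derivative is an integral over $\{f=t\}$ of $\bigl(c(\lambda-kt)+c'(n-R_\omega)\bigr)\phi/|\nabla f|$. Two steps then need more than you have. First, closing this as a differential inequality in $I(t)$. Second, converting $I(t)$ into the coarea density $\int_{\{f=t\}}\phi/|\nabla f|$, which is what computes $\int\phi e^{-\tau f}$. Both require $|\nabla f|^2=\rho^2-R_\omega\gtrsim t$ on the level set, i.e.\ a bound $R_\omega\leq(1-c)\rho^2$. Nothing like this is available without curvature assumptions, least of all on a limit space, and for $k=0$ this is exactly where the argument breaks. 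Moreover, an inequality $I'\leq c(t)I$ with $c$ linear in $t$, as you phrase it, only gives $I\lesssim e^{\lambda t}$ when $k=0$. That is not integrable against $e^{-\tau t}$ for $\tau<\lambda$.

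\textbf{The missing idea.} Localize the weighted identity itself. On $\{\rho^2<T\}\cap\mathcal R$, with the codimension-four cutoffs, the divergence theorem gives
\[
 0\leq e^{n-T}\int_{\{\rho^2=T\}\cap\mathcal R}\phi|\nabla f|\,dA
 =2\int_{\{\rho^2<T\}\cap\mathcal R}\bigl(\lambda-(k+1)f\bigr)\phi e^{-f}\omega^n .
\]
This works for three reasons. The weight removes $R_\omega$, since $\Delta_f f=-f$ exactly. The boundary flux has a sign. The bulk coefficient is negative for large $f$ even when $k=0$. Splitting at a fixed level $\rho^2=B$ then bounds the Gaussian norm and the first moment uniformly in $T$, with no growth estimate at all.

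\textbf{Completing the argument.} The case $\tau\geq1$ is immediate. For $0<\tau<1$, the paper applies the same bound to $s^{\otimes q}$, which has charge $q\lambda$, and then uses H\"older against $\int e^{-\sigma f}\omega^n<\infty$. Letting $T\to\infty$ yields the first-moment identity. Indeed, the boundary term converges, and it is integrable in $T$ by coarea and the finite first moment, so its limit is zero.

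\textbf{Constants.} In the normalization \eqref{eq:canonical-shrinker-normalization} one has $\Delta_f f=-f$. It is $\rho^2=f+n$ that satisfies $\Delta_f\rho^2=n-\rho^2$. The exact relation is $-\sqrt{-1}\mathcal L_\xi=\nabla^{h^k}_{\nabla^{1,0}f}+kf$, which gives $\operatorname{div}(e^{-f}\phi\nabla f)=2(\lambda-(k+1)f)\phi e^{-f}$. So there is no $n$-term to cancel. With your $\Delta_f f=n-f$ you would obtain $(\lambda+n)/(k+1)$ instead of $\lambda/(k+1)$.

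\textbf{Your fallbacks.} Neither repairs the integrability step. The polynomial-growth input from \cite[Lemma~4.2]{LZ} is a statement about holomorphic functions on smooth shrinkers and does not reach limit spaces. Lifting $s$ via Proposition~\ref{prop:global-plurianticanonical-lifting} is circular, because that proposition assumes \eqref{eq:limit-section-gaussian-integrability}, which is what you are trying to prove.
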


\begin{proof}
By Lemma \ref{lem:fixed-power}, we know that
\(|s|_{h^k}\) is locally bounded.
 Note that 
 the canonical lift and the Chern connection are
related by
\begin{equation}\label{eq:charge-connection-operator}
 -\sqrt{-1}\,\mathcal L_\xi=\nabla^{h^k}_{\nabla^{1,0} f}+kf
 \qquad\hbox{on }H^0(X,-kK_X).
\end{equation}
Indeed, in a holomorphic coordinate frame
\(e=\partial_{z_1}\wedge\cdots\wedge\partial_{z_n}\) of \(-K_X\),
\[
 |e|_h^2=e^{-f}\det(g_{i\bar j}),\qquad
 \nabla^h_{\nabla^{1,0} f}e=\bigl(\nabla^{1,0} f(\log\det g)-|\nabla^{1,0} f|^2\bigr)e.
\]
Since $\xi=J\nabla f$ is a holomorphic vector field, 
\(
 \mathcal L_\xi e=-\sqrt{-1}\,\partial_j(g^{i\bar j}\partial_{\bar j}f)e.
\)
Consequently,
\[
 \begin{split}
 \bigl(-\sqrt{-1}\mathcal L_\xi-\nabla^h_{\nabla^{1,0} f}\bigr)e
 &=\bigl(-\Delta_\omega f+|\nabla^{1,0}f|^2\bigr)e
 =fe.
 \end{split}
\]
 Then \eqref{eq:charge-connection-operator} follows from tensoriality.

Assume \(s\ne0\) and write \(As=\lambda s\). On \(\mathcal R\), \eqref{eq:charge-connection-operator} gives
\begin{equation}\label{eq:limit-section-charge-divergence}
 \nabla^{h^k}_{\nabla^{1,0}f}s=(\lambda-kf)s,
 \qquad
 \operatorname{div}\!\left(
   e^{-f}|s|_{h^k}^2\nabla f\right)
 =2\bigl(\lambda-(k+1)f\bigr)|s|_{h^k}^2e^{-f}.
\end{equation}

For almost every regular value \(T\) of \(\rho^2\), apply the divergence
theorem on \(\{\rho^2<T\}\cap\mathcal R\) after inserting the cutoffs from
Lemma~\ref{lem:codimension-four-cutoffs}.  Local boundedness of \(|s|\)
and \(|\nabla f|\) makes the extra term at the singular set at most
\(C_T\|\nabla\vartheta_\nu\|_{L^1}\to0\).  Thus
\[
 0\leq e^{n-T}\int_{\{\rho^2=T\}\cap\mathcal R}
       |s|_{h^k}^2|\nabla f|\,dA
 =2\int_{\{\rho^2<T\}\cap\mathcal R}
   \bigl(\lambda-(k+1)f\bigr)|s|_{h^k}^2e^{-f}\omega^n.
\]
Choose \(B\) so that \((k+1)(B-n)-\lambda\geq1\).  Splitting the last
integral at \(\{\rho^2\leq B\}\) gives, for regular \(T>B\),
\[
 \int_{\{B<\rho^2<T\}}\bigl((k+1)f-\lambda\bigr)
       |s|_{h^k}^2e^{-f}\omega^n
 \leq
 \int_{\{\rho^2\leq B\}}
  |\lambda-(k+1)f|\,|s|_{h^k}^2e^{-f}\omega^n.
\]
The right side is finite and independent of \(T\).  Since the coefficient
on the left grows linearly in \(\rho^2\), the Gaussian norm and first
\(\rho^2\)-moment are finite.   This proves
\eqref{eq:limit-section-weighted-integrability} for \(\tau=1\).

Applying the same argument to the reflexive tensor power
\(s^{\otimes q}\), which has charge \(q\lambda\), gives
\[
 \int_{\mathcal R}|s|_{h^k}^{2q}e^{-f}\omega^n<\infty
\]
for every integer \(q\geq1\).  For \(0<\tau<1\), choose an integer
\(q\geq2\) with \(q\tau>1\).
H\"older's inequality, with
\(\sigma=(q\tau-1)/(q-1)>0\), yields
\[
 \int_{\mathcal R}|s|_{h^k}^2e^{-\tau f}\omega^n
 \leq
 \left(\int_{\mathcal R}|s|_{h^k}^{2q}e^{-f}\omega^n\right)^{1/q}
 \left(\int_{\mathcal R}e^{-\sigma f}\omega^n
       \right)^{(q-1)/q}<\infty.
\]
The last integral is finite by \eqref{eq:uniform-potential-estimates} and
the standard volume upper bound for shrinkers.
\end{proof}

\subsection{Global H\"ormander lifting and peak sections}

We now fix a centered convergence
\eqref{eq:general-LLW-convergence}.  The cutoff lemma lets us approximate
sections on \(\mathcal R\) by compactly supported smooth sections;
the H\"ormander estimate then corrects their \(\bar\partial\)-errors on
the smooth spaces \(X_i\).  Write
\[
 h_i=e^{-f_i}h_{\omega_i^n},
 \qquad
 h_\infty=e^{-f_\infty}h_{\omega_\infty^n}.
\]

Lemma~\ref{cor:ab-base-function-lifting} follows from the case $k=0$ of the following result.

\begin{proposition}
\label{prop:global-plurianticanonical-lifting}
Let \eqref{eq:general-LLW-convergence} be a centered
convergent sequence.  Fix \(k\geq0\), and let
\(s\in H^0(\mathcal R,-kK_{\mathcal R})\),
 satisfying
\begin{equation}\label{eq:limit-section-gaussian-integrability}
 \int_{\mathcal R}|s|_{h_\infty^k}^2e^{-f_\infty}
       \omega_\infty^n<\infty.
\end{equation}
There are sections
\[
 s_{i}\in H^0(X_i,-kK_{X_i}),
\]
which converge to \(s\) smoothly on
compact subsets of \(\mathcal R\).  Their norms converge uniformly
on every fixed pointed metric ball to the continuous extensions of
\(|s|_{h_\infty^k}\).  When \(k=0\), the functions \(s\) extend to
\(\mathcal O(X_\infty)\), and \(s_{i}\to s\) uniformly on every fixed
pointed ball.
\end{proposition}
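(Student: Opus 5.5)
The plan is the standard cutoff--transplant--H\"ormander scheme, done globally with the Gaussian weight. Fix an exhaustion of \(\mathcal R\) by compact sets \(K_j\) and the compatible embeddings \(\phi_i:K_j\to X_i\) from the \(\widehat C^\infty\) convergence, under which \(\phi_i^*(\omega_i,J_i,f_i)\to(\omega_\infty,J_\infty,f_\infty)\) smoothly.

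\emph{Step 1: truncation.} Given \(\varepsilon>0\), set \(\tilde s=\chi(\rho^2/T)\,\vartheta_\nu\, s\). Here \(\chi\) is a one-variable cutoff in the potential, and \(\vartheta_\nu\) comes from Lemma~\ref{lem:codimension-four-cutoffs}. The weighted \(L^2\) error \(\int|\bar\partial\tilde s|^2_{h_\infty^k}e^{-f_\infty}\omega_\infty^n\) splits into two pieces:
\begin{itemize}
\item the far part, which is bounded by the tail of \eqref{eq:limit-section-gaussian-integrability} and is therefore small for large \(T\);
\item the part near \(\mathcal S\), which is at most \(\sup_{\{\rho^2<2T\}}|s|^2\cdot\|\nabla\vartheta_\nu\|_{L^2}^2\). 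This tends to \(0\) by local boundedness of \(|s|\) (Lemma~\ref{lem:fixed-power}) and \eqref{eq:LLW-cutoffs} with \(q=2<4\).
\end{itemize}
Also \(\tilde s\to s\) in weighted \(L^2\).

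\emph{Step 2: transplant and correct.} Push \(\tilde s\) to \(X_i\) via \(\phi_i\). Since the complex structures converge only asymptotically, \(\phi_{i*}\tilde s\) is a smooth section of \(-kK_{X_i}\) rather than a holomorphic one: one identifies the bundles using the \(J_i\)-holomorphic frame obtained from \(\omega_i^n\), or a \(C^\infty\) bundle isomorphism close to the identity. Its \(\bar\partial_{J_i}\) is then small in \(L^2(h_i^k e^{-f_i})\) for \(i\) large.

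On \(X_i\), the metric \(h_i^k e^{-f_i}\) on \(-kK_{X_i}\) has curvature \((k+1)\omega_i\) by \eqref{eq:gaussian-polarization}. So we write the section as an \((n,0)\)-form with values in \(-(k+1)K_{X_i}\) with weight \(e^{-(k+1)f_i}h_{\omega_i^n}^{k+1}\), and apply H\"ormander's \(L^2\) estimate on the complete K\"ahler manifold \((X_i,\omega_i)\) with curvature lower bound \(k+1\geq1\). This gives \(u_i\) with \(\bar\partial u_i=\bar\partial(\phi_{i*}\tilde s)\) and \(\|u_i\|^2\leq \|\bar\partial \phi_{i*}\tilde s\|^2\), uniformly in \(i\). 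Set \(s_i=\phi_{i*}\tilde s-u_i\), which is holomorphic with finite Gaussian norm. By Lemma~\ref{lem:homogeneous-section-weighted-integrability}-type growth (polynomial growth of \(L^2\)-Gaussian holomorphic sections) and the algebraicity results of \cite{SZ}, \(s_i\) is an algebraic section. Then diagonalize \(\varepsilon=\varepsilon_i\to0\).

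\emph{Step 3: convergence.} Because \(\|u_i\|\to0\) in weighted \(L^2\), the mean value estimate of Lemma~\ref{lem:fixed-power} on smooth \(X_i\) (constants uniform in \(i\) by Theorem~\ref{thm:uniform-entropy}) gives \(\sup_{B(p_i,D)}|u_i|\to0\) away from the support of \(\bar\partial\tilde s\). Near \(\mathcal S\), instead use that \(s_i\) is holomorphic: the value and gradient bounds give equicontinuity of \(|s_i|_{h_i^k}\) on pointed balls. Combining this with the smooth convergence \(s_i\to s\) on compact subsets of \(\mathcal R\) (interior elliptic estimates for \(\bar\partial\)) and the density of \(\mathcal R\), we get uniform convergence of norms to the Lipschitz extension of \(|s|\). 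For \(k=0\), the functions are locally bounded and holomorphic on \(\mathcal R\), and \(\mathcal S\) has codimension \(\geq 4\); normality of \(X_\infty\) \cite{HZ} gives the extension to \(\mathcal O(X_\infty)\). Uniform convergence on balls follows from the equicontinuity just described, via Arzel\`a--Ascoli in the Gromov--Hausdorff sense.

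\emph{Main obstacle.} I expect the hardest step to be Step 2's bookkeeping: controlling \(\bar\partial_{J_i}\phi_{i*}\tilde s\) uniformly as \(T,\nu\) vary. The support of \(\tilde s\) grows, so the choice of \(i\) must depend on \((T,\nu)\). A clean diagonal argument handles this, since for fixed \((T,\nu)\) the support is a fixed compact subset of \(\mathcal R\). A secondary issue is verifying algebraicity of \(s_i\), which I would take from finite Gaussian norm as in \cite{SZ}.
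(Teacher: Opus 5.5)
Your main line is the paper's proof. You cut off in the potential and with the codimension-four cutoffs of Lemma~\ref{lem:codimension-four-cutoffs}, transplant to $X_i$ through determinant gauges, and apply H\"ormander after rewriting $-kK_{X_i}$-valued sections as $(n,0)$-forms with values in $-(k+1)K_{X_i}$ carrying $h_i^{k+1}$ of curvature $(k+1)\omega_i$. You then choose the cutoff parameters diagonally and conclude with interior elliptic estimates on $\mathcal R$, plus equicontinuity from Lemma~\ref{lem:fixed-power} and Arzel\`a--Ascoli. One slip: your preliminary remark that $h_i^ke^{-f_i}$ on $-kK_{X_i}$ has curvature $(k+1)\omega_i$ is wrong, since its curvature is $k\omega_i+\sqrt{-1}\partial\bar\partial f_i$. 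The twisted formulation you actually use is the correct one, and it is exactly the paper's.

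The step that does not hold is your argument that $s_i=\phi_{i*}\tilde s-u_i$ is algebraic. Holomorphic sections of finite Gaussian norm need not have polynomial growth. On the Gaussian shrinker with $k=0$ the weight is a Gaussian $e^{-c|z|^2}$, and $e^{z_1}$ has finite Gaussian norm without being a polynomial. Nothing in the H\"ormander construction excludes such output; if $u_i$ is the minimal solution, $s_i$ is the Bergman projection of $\phi_{i*}\tilde s$, which is typically transcendental. Also, Lemma~\ref{lem:homogeneous-section-weighted-integrability} goes the other way (homogeneous implies integrable), so it cannot supply polynomial growth.

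The paper's own proof stops at holomorphic sections of finite Gaussian norm, which is all its later uses need: factoring functions through $\pi_i$ and restricting sections to the compact central fiber. If you do want algebraic $s_i$, use the soliton torus. It acts unitarily on the Gaussian $L^2$ space of holomorphic sections of $-kK_{X_i}$, so $s_i$ is the $L^2$-limit of finite sums of its weight components. Each component is a torus eigensection, hence algebraic by the finiteness results of \cite{SZ}. Replacing $s_i$ by a long enough partial sum changes it by an arbitrarily small weighted $L^2$ amount. By Lemma~\ref{lem:fixed-power} and Cauchy estimates, that change is also arbitrarily small uniformly on fixed pointed balls and smoothly on compact subsets of $\mathcal R$. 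After re-diagonalizing, all conclusions of the proposition survive.
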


\begin{proof}
The bound \eqref{eq:limit-section-gaussian-integrability} implies
local \(L^2\) control because \(f_\infty\) is locally bounded.
Lemma~\ref{lem:fixed-power} therefore extends
\(|s|_{h_\infty^k}\) locally Lipschitz to \(X_\infty\).  When \(k=0\),
the gradient estimate applies to \(s\) itself; the resulting continuous
function is holomorphic across the singular set by normality and the
Riemann extension theorem.

Let \(\chi_R\) be
a smooth function of \(f_\infty\), equal to one on
\(\{f_\infty\leq R\}\), equal to zero on
\(\{f_\infty\geq2R\}\), and satisfying \(|\chi_R'|\leq C/R\).  The estimate
\(|\nabla f_\infty|^2\leq f_\infty+n\) and
\eqref{eq:limit-section-gaussian-integrability} give
\begin{equation}\label{eq:outer-potential-cutoff-error}
 \|s\,\bar\partial\chi_R\|_{L^2(h_\infty^k e^{-f_\infty})}^2
 \leq \frac{C}{R}
  \int_{\{f_\infty\geq R\}}|s|_{h_\infty^k}^2e^{-f_\infty}
       \omega_\infty^n\longrightarrow0.
\end{equation}
For fixed \(R\), choose the cutoffs \(\vartheta_\nu\) from
Lemma~\ref{lem:codimension-four-cutoffs} on a centered ball containing
\(\{f_\infty\leq2R\}\).  The compact-sublevel bound and that lemma give
\begin{equation}\label{eq:general-singular-cutoff-error}
 \|\chi_Rs\,\bar\partial\vartheta_\nu
   \|_{L^2(h_\infty^k e^{-f_\infty})}\longrightarrow0.
\end{equation}
Thus \(v_{R,\nu}:=\chi_R\vartheta_\nu s\) is a compactly supported
smooth section of \(-kK_{\mathcal R}\), and its
\(\bar\partial\)-error tends to zero when first \(\nu\to\infty\) and then
\(R\to\infty\).

On each fixed support, the determinant of the complex-linear differential
of a compatible regular-locus embedding induces an identification of the
anticanonical powers.  In these gauges the bundle metrics and Chern
connections converge smoothly.  Transport \(v_{R,\nu}\) and extend it by
zero across its fixed collar to obtain
\[
 v_{i;R,\nu}\in C_c^\infty(X_i,-kK_{X_i}),
 \qquad
 \alpha_{i;R,\nu}:=\bar\partial v_{i;R,\nu}.
\]
Choose \(R_i,\nu_i\to\infty\) sufficiently slowly, and set
\(v_{i}:=v_{i;R_i,\nu_i}\) and
\(\alpha_{i,a}:=\bar\partial v_{i}\), and we can obtain
\begin{equation}\label{eq:global-section-cutoff-error}
 \int_{X_i}|\alpha_{i}|_{\omega_i\otimes h_i^k}^2e^{-f_i}
       \omega_i^n\longrightarrow0.
\end{equation}

To apply H\"ormander's estimate, we use the canonical identifications
\[
 -kK_{X_i}
 \simeq \Lambda^{n,0}\otimes(-(k+1)K_{X_i}),
 \qquad
 \Lambda^{0,1}\otimes(-kK_{X_i})
 \simeq \Lambda^{n,1}\otimes(-(k+1)K_{X_i}).
\]
Equip the form bundles with the Hermitian metrics induced by \(\omega_i\).
If \(\widetilde u\) and \(\widetilde\alpha\) denote the forms corresponding
to \(u\) and \(\alpha\), respectively, then
\[
 |\widetilde u|_{\omega_i\otimes h_i^{k+1}}^2
 =e^{-f_i}|u|_{h_i^k}^2, \quad  |\widetilde\alpha|_{\omega_i\otimes h_i^{k+1}}^2
 =e^{-f_i}|\alpha|_{\omega_i\otimes h_i^k}^2.
\]
Thus \cite[Theorem~5.1]{Demailly}, applied to the associated
\((n,1)\)-forms, gives smooth sections \(u_{i}\) of \(-kK_{X_i}\)
satisfying
\begin{equation}\label{eq:global-section-weighted-solution}
 \bar\partial u_{i}=\alpha_{i},\qquad
 \int_{X_i}|u_{i}|_{h_i^k}^2e^{-f_i}\omega_i^n
 \leq\frac1{k+1}\int_{X_i}
 |\alpha_{i}|_{\omega_i\otimes h_i^k}^2e^{-f_i}\omega_i^n
 \longrightarrow0.
\end{equation}
Then we let \(s_{i}=v_{i}-u_{i}\).
On each compact regular set, the transported sections converge smoothly to
\(s\), their \(\bar\partial\)-errors converge smoothly to zero, and
\eqref{eq:global-section-weighted-solution} gives local \(L^2\)-convergence
of \(u_{i}\) to zero.  Interior elliptic estimates therefore give smooth
convergence \(s_{i}\to s\) through the determinant gauges.

On every fixed centered ball, the transported sections have uniformly
bounded local \(L^2\)-norms, while
\eqref{eq:global-section-weighted-solution} controls the corrections.
Lemma~\ref{lem:fixed-power} gives equicontinuity of
\(|s_{i}|_{h_i^k}\).
Arzel\`a--Ascoli compactness and density of \(\mathcal R\) identify
every subsequential limit with the continuous extension of
\(|s|_{h_\infty^k}\).  When \(k=0\), the same argument applied to the
functions themselves gives pointed uniform convergence.
\end{proof}

Using Proposition~\ref{prop:global-plurianticanonical-lifting},
we may adapt the peak-section construction of Donaldson--Sun
\cite[Section~2.2]{DS}. This yields the following result; see also
\cite[Lemmas~4.1--4.2]{HZ}.
\begin{proposition}[Sequential Gaussian peak sections]
\label{prop:sequential-peaks}
For the convergence in \eqref{eq:general-LLW-convergence},
fix \(D\geq1\), \(x\in B(p_\infty,D)\),
\(x_i\to x\), and \(0<\epsilon<1\).
There is an integer \(q\geq\epsilon^{-1}\),  independently of
\(i\), and, for all sufficiently large \(i\), sections
\(
   s_{i,x}\in H^0(X_i,-qK_{X_i})
\)
such that
\begin{equation}
\label{eq:sequential-gaussian-peak}
 \sup_{y\in B(p_i,2D)}
 \left|
   |s_{i,x}(y)|_{h_i^q}^{2}
   -e^{-q d_i(x_i,y)^2/2}
 \right|<\epsilon
\end{equation}
and
\begin{equation}
\label{eq:sequential-peak-L2}
 \int_{B(p_i,4D)}|s_{i,x}|_{h_i^q}^{2}
       (q\omega_i)^n\leq C(n,D).
\end{equation}
In particular, there is an integer \(q_0\geq1\), and there are
constants \(a,c>0\), such that, for all large
\(i\), there are sections
\begin{equation}
\label{eq:vertex-sequential-peak}
 u_i\in H^0(X_i,-q_0K_{X_i}),
 \qquad
 \inf_{B(p_i,4a)}|u_i|_{h_i^{q_0}}\geq c.
\end{equation}
\end{proposition}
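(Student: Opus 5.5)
We follow the Donaldson--Sun peak-section strategy \cite[Section~2.2]{DS}: build the Gaussian peak section on the limit space \(X_\infty\), then transfer it to \(X_i\) with Proposition~\ref{prop:global-plurianticanonical-lifting}. The steps are as follows.

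\textbf{Step 1: a model section on \(X_\infty\).} Given \(\epsilon\), pick a small scale and a point \(x'\in\mathcal R\) very close to \(x\), with the ball around \(x'\) having small volume-density defect. Such \(x'\) exists because \(\mathcal S\) has codimension four, and the density of \(\mathcal R\) lets us work near \(x'\) instead of \(x\) at the cost of a small error in \(d(x,\cdot)\).

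Rescale by \(q\): the metrics \(q\omega_\infty\) on \(B(x',C q^{-1/2})\) converge to flat \(\mathbb C^n\). For \(q\ge\epsilon^{-1}\) large, chosen once and independently of \(i\), the curvature of \(h_\infty^q\), which is \(q\omega_\infty\), is close to the flat model. Hence there is a local holomorphic trivialization in which the model Gaussian section \(\sigma_0\) satisfies \(|\sigma_0|^2\approx e^{-q|z|^2/2}\).

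Cut off \(\sigma_0\) with \(\chi\) supported on a ball of radius about \(Kq^{-1/2}\). The \(\bar\partial\)-error \(\sigma_0\bar\partial\chi\) is then of size \(e^{-K^2/4}\) in the Gaussian \(L^2\)-norm. Solve \(\bar\partial u=\sigma_0\bar\partial\chi\) on \(\mathcal R\) using the H\"ormander estimate with the weight \(h^q e^{-f}\), whose curvature is at least \((q+1)\omega\). To make this legitimate on the singular limit, apply it on \(X_i\) directly instead: transport \(\chi\sigma_0\) as in the proof of Proposition~\ref{prop:global-plurianticanonical-lifting}, using the compatible embeddings near the regular point \(x'\) (the support is compact in \(\mathcal R\)), and correct with \cite[Theorem~5.1]{Demailly} on \(X_i\). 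This gives
\[
s_{i,x}=\chi\sigma_0-u_i,\qquad \|u_i\|_{L^2}\lesssim e^{-K^2/8}.
\]

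\textbf{Step 2: estimates.} The \(L^2\) bound \eqref{eq:sequential-peak-L2} is immediate: the Gaussian has mass \(\sim C(n)\), and \(e^{-f}\) is comparable to a constant on \(B(p,4D)\) by \eqref{eq:uniform-potential-estimates}.

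For the \(C^0\) estimate \eqref{eq:sequential-gaussian-peak}, split into two regions. On \(B(x_i,Kq^{-1/2})\), use the explicit form of \(\chi\sigma_0\) together with the mean-value estimate of Lemma~\ref{lem:fixed-power} applied to the holomorphic part of \(u_i\). Away from that ball, both \(|s_{i,x}|^2\) and \(e^{-qd^2/2}\) are below \(\epsilon\): the former by the same mean-value estimate with \(L^2\)-smallness, the latter for \(K\) large. Because Lemma~\ref{lem:fixed-power} is uniform on \(\overline{\mathcal{KRS}(n)}\), the errors are uniform in \(i\) and in \(y\in B(p_i,2D)\). Finally, \(d_i(x_i,y)\) is close to \(d_i(x'_i,y)\).

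\textbf{Main obstacle.} The hard part is the near-peak comparison \(|\chi\sigma_0|^2\approx e^{-q d_i(x_i,y)^2/2}\). It needs \(C^{1,\alpha}\)-closeness of \(q\omega_i\) to the flat metric on the rescaled ball, uniformly in \(i\). This holds only around regular points, which is why we pass to \(x'\in\mathcal R\) and use smooth convergence there. Since the Gaussian profile at scale \(q^{-1/2}\) is continuous in the center, moving from \(x'\) back to \(x\) costs only a small error in the profile.

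\textbf{Vertex statement.} For \eqref{eq:vertex-sequential-peak}, take \(x=p_\infty\) with \(\epsilon\) small and \(q_0=q\), and choose \(a\) with \(q_0(8a)^2/2\le 1\). Then on \(B(p_i,4a)\) we have
\[
|u_i|^2\ge e^{-1}-\epsilon,
\]
so we can take \(c=(e^{-1}/2)^{1/2}\).
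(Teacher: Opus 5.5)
\textbf{There is a genuine gap at singular points.} Your scheme works when $x\in\mathcal R$: a model section, transplanted to $X_i$ through the regular-locus embeddings, corrected by H\"ormander with weight $h_i^qe^{-f_i}$, with $C^0$ control from Lemma~\ref{lem:fixed-power}. It fails when $x\in\mathcal S$, and that is the case that matters. The vertex statement is used in Theorem~\ref{thm:cone-isolation} with $x=p_\infty$, the vertex of a generally singular Ricci-flat cone.

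Your reduction to a nearby regular point $x'$ imposes two incompatible conditions:
\begin{enumerate}
\item Replacing $e^{-qd(x',\cdot)^2/2}$ by $e^{-qd(x,\cdot)^2/2}$ within $\epsilon$ requires $\sqrt q\,d(x,x')\ll 1$. The profile lives at scale $q^{-1/2}$, not at scale $1$, so ``a small error in $d(x,\cdot)$'' is not enough.
\item Your flat-model Gaussian at $x'$ requires $(B(x',Kq^{-1/2}),q\omega_\infty)$ to be nearly Euclidean, hence contained in $\mathcal R$. This forces $d(x',x)\geq d(x',\mathcal S)\gtrsim Kq^{-1/2}$, i.e.\ $\sqrt q\,d(x,x')\gtrsim K$.
\end{enumerate}
Equivalently: in the rescaled metric $q\omega_\infty$ the singular point $x$ lies at distance $\ll1$ from $x'$. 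As $q\to\infty$ these rescaled balls converge to the tangent cone at $x$, which is not $\mathbb C^n$. So the step you call harmless, ``the Gaussian profile is continuous in the centre,'' is exactly where the argument breaks.

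Your construction also misses an arithmetic obstruction: nothing in it singles out particular values of $q$. But if $x$ were, say, an orbifold point of type $\frac13(1,1)$, every section of $-qK$ with $3\nmid q$ would have norm tending to $0$ at $x$. This is why the proposition only asserts that \emph{some} $q\geq\epsilon^{-1}$ exists.

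\textbf{What is missing} is the Donaldson--Sun model at the singular point itself \cite{DS}:
\begin{enumerate}
\item Take a tangent cone $C$ at $x$; it is a Ricci-flat K\"ahler cone whose singular set has Minkowski codimension at least four.
\item On $C$, take the section of the trivial bundle whose metric $e^{-r^2/2}$ has curvature $\omega_C$.
\item Cut it off near the singular set of $C$, including the vertex, and at large $r$. Use codimension-four cutoffs as in Lemma~\ref{lem:codimension-four-cutoffs}, so that the $\bar\partial$-error is small in $L^2$.
\item Choose $q$ along a suitable sequence, e.g.\ multiples of a fixed integer, so that $(-qK,h^q)$ over the regular part of the rescaled annulus is close to the model bundle. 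In other words, its holonomy relative to the model must be nearly trivial.
\end{enumerate}
With this model in place, the rest of your argument goes through: transplant and correct on $X_i$, or (as the paper indicates) build the section on $X_\infty$ and lift it by Proposition~\ref{prop:global-plurianticanonical-lifting}. The mean-value estimates of Lemma~\ref{lem:fixed-power}, which allow singular centres, then give \eqref{eq:sequential-gaussian-peak} and \eqref{eq:sequential-peak-L2}. Your deduction of \eqref{eq:vertex-sequential-peak} from those two estimates is correct as written.
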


\section{Cone isolation and scalar curvature gap}
\label{sec:limit-cone-setup}

\subsection{Cone isolation}
Assume that the limit in \eqref{eq:general-LLW-convergence} is a
(singular) Ricci-flat K\"ahler metric cone
\(\mathcal C\), with vertex \(o=p_{\infty}\). Then in this case the associated Fano fibration of $\mathcal{C}$ is the identify map and $\mathcal{C}$ itself is a polarized affine cone with log terminal singularities \cite{HZ}.

\begin{proposition}
\label{prop:localized}
Assume that the limit in \eqref{eq:general-LLW-convergence} is a singular
Ricci--flat K\"ahler cone, and let
\(\pi_i:X_i\to(Y_i,o_i)\) be the natural Fano fibrations.  Then for \(E_i:=\pi_i^{-1}(o_i)\),
\[
          \sup_{x\in E_i}d_i(p_i,x)\longrightarrow0.
       \]
\end{proposition}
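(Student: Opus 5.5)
Since \(\mathcal C\) is a Ricci-flat cone, its Fano fibration is the identity and its ring of regular functions \(\mathcal O(\mathcal C)\) is finitely generated by homogeneous functions \(h_1,\dots,h_N\) with positive weights under the Reeb field. These functions define an embedding \(H_\infty=(h_1,\dots,h_N):\mathcal C\hookrightarrow\mathbb C^N\) with \(H_\infty^{-1}(0)=\{o\}\).

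We lift each \(h_j\) by Lemma~\ref{cor:ab-base-function-lifting} to regular functions \(h_{j,i}\) on \(X_i\). These converge to \(h_j\) uniformly on every fixed pointed ball. Set \(H_i=(h_{1,i},\dots,h_{N,i})\). Each \(h_{j,i}\) is a regular function on \(X_i\), so it factors through \(\pi_i\), because \(\pi_{i*}\mathcal O_{X_i}=\mathcal O_{Y_i}\) and \(Y_i\) is affine. Hence \(H_i\) is constant on \(E_i\), which is connected since \(\pi_i\) has connected fibers. We may also arrange that the lifts are \(\mathbb T_i\)-homogeneous, by taking weight components. Projecting onto the correct character space preserves the convergence: the soliton fields converge on \(\mathcal R\), and the weights on the limit are isolated, so we can average over the torus orbit. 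The orbits stay in bounded balls because \(f_i\) is invariant. With homogeneous lifts of positive weight, \(h_{j,i}\) vanishes at the \(\mathbb T_i\)-fixed minimum \(p_i\), which lies over the vertex \(o_i\). Therefore \(H_i\equiv0\) on \(E_i\), i.e.\ \(E_i\subset H_i^{-1}(0)\).

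Given \(\delta>0\), let \(c_\delta=\inf\{|H_\infty(y)|: \delta\le d_\infty(o,y)\le 1\}>0\), which is positive by compactness and because \(H_\infty^{-1}(0)=\{o\}\). Uniform convergence on \(B(p_i,2)\) gives \(|H_i|\ge c_\delta/2\) on the annulus \(\{\delta\le d_i(p_i,\cdot)\le1\}\) for large \(i\). So \(E_i\cap B(p_i,1)\subset B(p_i,\delta)\). Since \(E_i\) is connected and contains \(p_i\), it cannot leave \(B(p_i,\delta)\) without crossing the annulus. Hence \(E_i\subset B(p_i,\delta)\), which is the claim.

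The main obstacle is the vanishing \(H_i(p_i)=0\), that is, homogeneity of the lifts with \emph{positive} weight for the possibly irrational soliton tori \(\mathbb T_i\). If weight projection is awkward, I would instead first subtract the constants \(h_{j,i}(p_i)\to h_j(o)=0\). This keeps uniform convergence, and it is enough that \(H_i\) is constant on the connected set \(E_i\ni p_i\), so \(H_i\equiv0\) on \(E_i\) after normalization. The connectedness argument above then goes through with this simpler route, and homogeneity is not needed at all.
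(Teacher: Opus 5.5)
Your proposal is correct and follows essentially the paper's route. You lift the coordinate functions of the cone via Lemma~\ref{cor:ab-base-function-lifting}, bound them below on a shell $\{\epsilon\le d_i(p_i,\cdot)\le 2\epsilon\}$, use smallness at $p_i$, and use that holomorphic functions are constant on the connected fiber $E_i\ni p_i$. Your normalization $h_{j,i}-h_{j,i}(p_i)$ plays exactly the role of the paper's smallness condition on a small ball around $p_i$, so the weight-projection detour is unnecessary. The fact $p_i\in E_i$, which both arguments use, holds because $p_i$ is a zero of the soliton field, so $\pi_i(p_i)$ is the unique torus-fixed point $o_i$ of $Y_i$.
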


\begin{proof}
Since the limit is a metric cone and hence a polarized affine cone, then we can find finitely many holomoprhic function whose common zero set is the vertext. Then using Lemma \ref{cor:ab-base-function-lifting}, we can obtain finitely many holomorphic functions $g_{i,a}$ on $X_i$ satisfying the folowing for any $\epsilon>0$,
 for sufficiently large \(i\geq i(\epsilon)\),
\begin{equation}\label{eq:full-shell-separation}
 \sum_{a=1}^N|g_{i,a}(x)|^2>\delta(\epsilon)>0
 \qquad\text{whenever}\qquad
 0<\epsilon\leq d_i(p_i,x)\leq 2\epsilon,
\end{equation}and 
\begin{equation}\label{inside small}
        \sum_{a=1}^N|g_{i,a}(x)|^2\leq \frac{\delta(\epsilon)}{2}
 \qquad\text{whenever}\qquad
 d_i(p_i,x)\leq \epsilon_1\ll \epsilon.
\end{equation}
  The maximum principle makes every holomorphic function constant on
\(E_i\), since $E_i$ is connected.
The desired collapse of the central fiber then follows from
\eqref{eq:full-shell-separation} and \eqref{inside small}.
\end{proof}

\begin{theorem}[=Theorem~\ref{int:cone-isolation}]\label{thm:cone-isolation}
Let \((X_i^n,\omega_i,J_i,f_i,p_i)\) be a sequence of 
smooth K\"ahler--Ricci shrinkers centered at minima of
their soliton potentials
and assume that
\[
   (X_i,\omega_i,J_i, f_i,p_i)
   \longrightarrow
   (\mathcal C,\omega_{\mathcal C},J_{\mathcal C},f_{\infty},o)
\]
in the pointed \(\widehat C^\infty\)-Cheeger--Gromov sense,
where the target is a (possibly singular) Ricci-flat K\"ahler metric
cone.  Then
\(\mathcal C\) and $X_i$ for every sufficiently large
\(i\), are the Gaussian shrinker.
\end{theorem}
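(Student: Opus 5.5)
Since \(\mathcal C\) is Ricci-flat on its regular locus and carries a shrinker structure, the shrinker equation on \(\mathcal R\) gives \(\sqrt{-1}\partial\bar\partial f_\infty=\omega_{\mathcal C}\). Hence \(f_\infty+n=\tfrac12 r^2\) with \(r=d(o,\cdot)\), \(R_{\omega_\infty}\equiv 0\), and the soliton field is the Reeb field of the cone. The plan is to show, via the weighted volume, that \(\mathbb W(\xi_i)\to 1\). The equality case of the entropy then forces each \(X_i\) to be Gaussian for large \(i\), and hence the limit \(\mathcal C\) is \(\mathbb C^n\).

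\textbf{Step 1: the central fibers collapse.} By Proposition~\ref{prop:localized}, \(E_i=\pi_i^{-1}(o_i)\) collapses to \(p_i\) in the Gromov--Hausdorff sense. In particular, \(X_i^{\mathbb T}\subset E_i\), since the fixed locus of the soliton torus maps to the vertex of the affine cone \(Y_i\). So every critical point of \(f_i\) lies within \(o(1)\) of \(p_i\).

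\textbf{Step 2: \(f_i(q)\to -n\) and \(R_{\omega_i}(q)\to0\) at all critical points.}
\begin{itemize}
\item The limit scalar curvature vanishes. Bounded scalar curvature on \(B(p_i,1)\) together with the smooth convergence does not directly give \(R_{\omega_i}(p_i)\to 0\), because \(o\) may be singular.
\item Instead I would use \(\rho_i^2=f_i+n\) and the uniform convergence \(f_i\to f_\infty\) on pointed balls supplied by the LLW convergence. This gives \(f_i(p_i)\to f_\infty(o)=-n\).
\item At a critical point, \(\rho^2=R\) and \(R\ge0\), so \(R_{\omega_i}(q)=f_i(q)+n\to0\) for all \(q\in X_i^{\mathbb T}\).
\end{itemize}

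\textbf{Step 3: a lower bound on the weighted volume at a minimum.} Rerun the proof of Theorem~\ref{thm:uniform-entropy} at the minimum \(q=p_i\). There the weights satisfy \(\lambda_j\ge0\) and \(\sum\lambda_j=-f_i(p_i)=n-R_{\omega_i}(p_i)\). Combined with Step 2, and by a localization (Duistermaat--Heckman) evaluation of \eqref{eq:character-formula} at fixed points, this gives
\[
\mathbb W(\xi_i)=\sum_F\int_F\frac{e^{-\dots}}{\dots}.
\]
\begin{itemize}
\item By Step 1 the fixed set is a single small component near \(p_i\). The minimum component has weight vector \(\lambda^{(i)}\) with \(|\lambda^{(i)}|_1\to n\).
\item Lemma~\ref{lem:homogeneous-function-spectral-gap} shows every nonzero weight of a homogeneous function is \(\ge1\). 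Since the \(z_j\) can be taken to be lifted homogeneous functions, as in Lemma~\ref{cor:ab-base-function-lifting} and the vertex peak sections of Proposition~\ref{prop:sequential-peaks}, each \(\lambda_j\ge1\).
\item Together with \(\sum\lambda_j\le n\), this forces \(\lambda_j=1\) for all \(j\) and \(R_{\omega_i}(p_i)=0\).
\end{itemize}

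\textbf{Step 4: conclude Gaussian.} Once \(R_{\omega_i}(p_i)=0\), the strong maximum principle for \(\Delta_f R=R-|\mathrm{Ric}|^2\), applied with \(R\ge0\), gives \(R\equiv0\). Then \(\mathrm{Ric}\equiv0\), so \(\sqrt{-1}\partial\bar\partial f=\omega\), and a Ricci-flat shrinker is the flat Gaussian \(\mathbb C^n\).

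\textbf{Main obstacle.} The key step is showing that each weight \(\lambda_j\ge1\) exactly at \(p_i\), i.e., that local eigen-coordinates at \(p_i\) can be realized by global homogeneous holomorphic functions on \(X_i\). This needs \(E_i=\{p_i\}\); only \(\operatorname{diam}E_i\to0\) is known.

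What I would try first:
\begin{itemize}
\item Lift the cone's linear coordinates (homogeneous generators of degree \(\ge1\) under the Reeb field) via Lemma~\ref{cor:ab-base-function-lifting}, and project onto \(\xi_i\)-weight spaces, using the Gaussian orthogonality of characters.
\item Use the lifted functions \(H_i\) and the smoothness of \(X_i\) to show that \(\pi_i\) is finite near \(p_i\). Then \(E_i\) is a point, and \(\pi_i\) is an isomorphism onto a smooth affine cone, which must be \(\mathbb C^n\).
\item The weight rigidity then follows, since a smooth affine cone with Reeb weights \(\lambda_j\ge1\) and \(\sum\lambda_j=n\) has all \(\lambda_j=1\).
\end{itemize}
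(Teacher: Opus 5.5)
There is a genuine gap, and it sits exactly where you place the main obstacle: nothing in the proposal shows that the central fiber $E_i=\pi_i^{-1}(o_i)$ is a point, and Step~3 presupposes it. Lemma~\ref{lem:homogeneous-function-spectral-gap} only controls weights of \emph{global} homogeneous holomorphic functions. But every global holomorphic function on $X_i$ factors through $\pi_i$, so it is constant on the compact connected fiber $E_i\ni p_i$. Such functions can serve as coordinates at $p_i$ only if $\dim E_i=0$.

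Your suggested remedy fails for the same reason. The functions $H_i$ from Lemma~\ref{cor:ab-base-function-lifting} factor through $\pi_i$ and cannot see $E_i$ at all. Smoothness of $X_i$ does not force finiteness either: the Feldman--Ilmanen--Knopf shrinker on the blow-up of $\mathbb C^n$ at the origin is smooth, with central fiber $\mathbb P^{n-1}$.

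Step~2 and the opening weighted-volume plan do not close the gap. First, $R_{\omega_i}(p_i)\to0$ is only asymptotic. Second, by weighted-volume continuity $\mathbb W(\xi_i)$ converges to $\Theta_{\mathcal C}$ times the Gaussian value. So showing that $\mathbb W(\xi_i)$ approaches the Gaussian value is the same as showing that $\mathcal C$ is flat, which is circular. Even then it would give only closeness, not exact Gaussianity, without a further gap theorem.

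The missing idea is to test $E_i$ with a section of the relatively ample bundle instead of with functions. This is what the paper does:
\begin{itemize}
\item Proposition~\ref{prop:sequential-peaks} gives $u_i\in H^0(X_i,-q_0K_{X_i})$ with $|u_i|_{h_i^{q_0}}\ge c$ on $B(p_i,4a)$.
\item Proposition~\ref{prop:localized} gives $E_i\subset B(p_i,a)$ for large $i$.
\item Hence $u_i|_{E_i}$ is a nowhere-vanishing section of the ample bundle $-q_0K_{X_i}|_{E_i}$ on the projective variety $E_i$, which forces $\dim E_i=0$.
\item Upper semicontinuity of fiber dimension and the contracting torus action then make $\pi_i$ finite. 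Since $\pi_{i*}\mathcal O_{X_i}=\mathcal O_{Y_i}$, it is an isomorphism.
\item A smooth polarized affine cone is $\mathbb C^n$ with homogeneous linear coordinates. The paper then concludes by \cite[Theorem~1.5]{LZ}.
\end{itemize}

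Once $E_i$ is known to be a point, your Steps~3--4 give a valid and more self-contained ending. The global coordinates have weights $\lambda_j\ge1$ by Lemma~\ref{lem:homogeneous-function-spectral-gap}. The identity $\sum_j\lambda_j=n-R_{\omega_i}(p_i)$ then forces $R_{\omega_i}(p_i)=0$. The strong maximum principle for the scalar-curvature equation gives Ricci-flatness, hence the Gaussian shrinker. This ending needs neither Step~2 nor the uniqueness theorem.
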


\begin{proof}
        Proposition~\ref{prop:sequential-peaks} gives an integer \(q\geq1\) and
constants \(a,c>0\) such that, for all sufficiently large \(i\), there are sections
\[
   \sigma_i\in H^0(X_i,-qK_{X_i})
\]
such that
\begin{equation}\label{eq:peak-nonvanishing}
   \inf_{B(p_i,2a)}|\sigma_i|_{h_i^q}\geq c.
\end{equation}
Proposition~\ref{prop:localized}(i) gives
\[
   E_i=\pi_i^{-1}(o_i)\subset B(p_i,a)
\]
for all sufficiently large \(i\).  Hence
the restriction \(\sigma_i|_{E_i}\) to the central fiber is nowhere zero. However, \(-K_{X_i}\) is
relatively ample. We obtain that the central fiber is zero-dimensional and hence \(\pi_i\) is an
isomorphism. Therefore by the standard result in commutative algebra, we obtain
\[
   (X_i,p_i)\simeq(Y_i,o_i)\simeq(\C^n,0).
\]
The uniqueness theorem \cite[Theorem~1.5]{LZ} therefore makes
\((X_i,\omega_i)\) Gaussian; see also
\cite[Corollary B]{conlon-deruelle}. 
\end{proof}

\subsection{Scalar curvature gap}

\begin{theorem}[=Theorem \ref{int:scalar-curvature-gap}]
\label{thm:entropy-scalar-gap}
For every \(n\geq1\), there is \(\epsilon_n>0\) such that a smooth shrinker
\((X^n,g,J,f,p)\) with 
\(p\in\operatorname*{argmin}_Xf\), is either Gaussian or satisfies
\begin{equation}
\label{eq:scalar-gap}
   \inf_{B_g(p,1)}R_\omega\geq\epsilon_n.
\end{equation}
\end{theorem}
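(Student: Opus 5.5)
We argue by contradiction and reduce to the cone-isolation theorem, following the strategy of \cite{LLW,LW}. Suppose the gap fails. Then there are non-Gaussian smooth K\"ahler--Ricci shrinkers \((X_i^n,\omega_i,J_i,f_i,p_i)\), with \(p_i\in\operatorname*{argmin}f_i\), and points \(y_i\in B_{g_i}(p_i,1)\) such that \(R_{\omega_i}(y_i)\to0\). By Theorem~\ref{thm:uniform-entropy} and Corollary~\ref{cor:weak-compactness}, after passing to a subsequence we have pointed \(\widehat C^\infty\)-Cheeger--Gromov convergence to a limit \((X_\infty,d_\infty,\omega_\infty,J_\infty,f_\infty,p_\infty)\) as in \eqref{eq:general-LLW-convergence}, and \(y_i\to y_\infty\in\overline{B(p_\infty,1)}\).

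The first step is to show that the scalar curvature of the limit vanishes somewhere on the regular part, and hence everywhere. On each smooth \(X_i\) the scalar curvature satisfies the shrinker equation \(\Delta_f R=R-2|\mathrm{Ric}|^2\), i.e. \(\Delta_f R\le R\), with \(R>0\) (strict positivity for non-flat shrinkers \cite{ChenScalar}). The difficulty is that \(y_\infty\) might be singular, so pointwise smooth convergence is not available at \(y_\infty\). I would use a uniform local Harnack / weak mean-value inequality for nonnegative \(f\)-supersolutions, \(\Delta_f R - R\le 0\), on \(B(y_i,1)\). This holds with constants depending only on \(n\) and on the uniform non-collapsing and Sobolev constants supplied by the entropy bound; it is the argument of \cite{LLW,LW}. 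It gives
\[
\fint_{B(y_i,1)}R_{\omega_i}^{\,\gamma}\le C\,R_{\omega_i}(y_i)^{\gamma}\to0
\]
for some small \(\gamma>0\). Smooth convergence on compact subsets of \(\mathcal R\) then yields \(R_{\omega_\infty}\equiv0\) on an open subset of \(\mathcal R\cap B(y_\infty,1)\). Since \(\mathcal R\) is connected (its complement has codimension four) and \(R_\infty\ge0\) satisfies \(\Delta_f R_\infty=R_\infty-2|\mathrm{Ric}|^2\) on \(\mathcal R\), the strong maximum principle propagates this to \(R_\infty\equiv0\) on all of \(\mathcal R\).

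With \(R_\infty\equiv0\), the same identity forces \(\mathrm{Ric}(\omega_\infty)=0\) on \(\mathcal R\). Therefore \(\sqrt{-1}\partial\bar\partial f_\infty=\omega_\infty\) and \(\nabla^2 f_\infty=g_\infty\) on \(\mathcal R\). Moreover, \(|\nabla f_\infty|^2=f_\infty+n\) with \(\min f_\infty=-n\) at \(p_\infty\). Using the cutoffs of Lemma~\ref{lem:codimension-four-cutoffs}, integrated along gradient lines, one identifies \(2(f_\infty+n)=d_\infty(p_\infty,\cdot)^2\), so \(X_\infty\) is a (singular) Ricci-flat K\"ahler metric cone with vertex \(p_\infty\). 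Theorem~\ref{thm:cone-isolation} then implies that \(X_i\) is Gaussian for all large \(i\), contradicting our choice of the sequence.

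The main obstacle is the first step. We must pass from \(R_{\omega_i}(y_i)\to0\) at possibly degenerating points to vanishing of \(R_\infty\) on an open regular set, which requires the uniform Moser-type weak Harnack inequality on spaces with only entropy control. This is exactly where I would invoke the integral estimates of \cite{LW,LLW} (including the \(L^{2-\epsilon}\) curvature bound \cite[Corollary~6.24]{LWII}) rather than pointwise smooth convergence. If no such Harnack inequality is directly available, an alternative is to use \(\Delta_f R\le R\) together with the heat kernel lower bound coming from non-collapsing to obtain \(\int_{B(y_i,1)}R_{\omega_i}\,e^{-f_i}\to0\), which suffices for the same conclusion.
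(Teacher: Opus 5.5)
Your proposal is correct and follows essentially the paper's route: contradiction, compactness from the uniform entropy bound, a regular limit point where $R_{\omega_\infty}=0$, identification of the limit as a Ricci-flat K\"ahler cone with vertex $p_\infty$, and then Theorem~\ref{thm:cone-isolation}. Two points of packaging differ. The paper extracts points $z_i$ with $f_i(z_i)\leq C$, $R_{\omega_i}(z_i)\leq C\sigma_i$ and harmonic radius at least $r_0$, so $z_\infty\in\mathcal R$ directly, and it cites \cite[Proposition~9.9]{LLW} for the cone step (which also covers your unjustified claim $\min f_\infty=-n$), whereas you use integral smallness of $R$ on a ball together with density of $\mathcal R$, and then reprove the cone structure by hand; for the smallness step your heat-kernel variant is the safer one, since Moser's weak Harnack inequality needs a Poincar\'e inequality and not only Sobolev and non-collapsing.
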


\begin{proof}
Suppose the assertion fails.  Then there is a sequence of non-Gaussian
K\"ahler--Ricci shrinkers
\(
 (X_i,g_i,J_i,f_i,p_i)
\)
such that
\[
 \sigma_i:=\inf_{B_{g_i}(p_i,1)}R_{\omega_i}\longrightarrow0.
\]
By the uniform entropy bound established in Theorem \ref{int:uniform-entropy}, their shrinker entropies
are bounded uniformly from below.

Then we record the following consequence of the argument in \cite{LW} and 
\cite[Section~9]{LLW}: there exist constants \(C<\infty\) and
\(r_0>0\), depending only on the dimension and the entropy lower bound,
such that, whenever
\[
 0<\sigma:=\inf_{B_g(p,1)}R_\omega,
\]
there is a point \(z\in X\) satisfying
\begin{equation}\label{eq:regular-low-scalar-point}
 f(z)\leq C,\qquad
 R_\omega(z)\leq C\sigma,\qquad
 \operatorname{hr}_g(z)\geq r_0.
\end{equation}
Here \(\operatorname{hr}\) denotes the harmonic radius used in
\cite{LLW}.

Applying \eqref{eq:regular-low-scalar-point} to \(X_i\), choose
\(z_i\in X_i\) satisfying \eqref{eq:regular-low-scalar-point} for $\sigma_i$. Then
 after passing to a subsequence,
\[
 (X_i,g_i,f_i,p_i)
 \longrightarrow
 (X_\infty,g_\infty,f_\infty,p_\infty), \quad z_i\to
z_\infty \in \mathcal{R}
\]
in the pointed \(\widehat C^\infty\)-Cheeger--Gromov sense.   Then the smooth convergence gives
\[
 R_{\omega_\infty}(z_\infty)
 =
 \lim_{i\to\infty}R_{\omega_i}(z_i)
 =0.
\]
It follows from \cite[Proposition~9.9]{LLW} that
\((X_\infty,g_\infty,p_\infty)\) is a Ricci-flat metric cone with
vertex \(p_\infty\). Since $X_i$ are K\"ahelr, the limit is a Ricci-flat K\"ahler cone.
Finally,
Theorem~\ref{thm:cone-isolation} implies
that \(X_i\) is Gaussian for all sufficiently large \(i\), contradicting
the choice of the sequence.

\end{proof}

\section{Scalar decay and asymptotic conicality}
\label{sec:scalar-decay-ac}

In this section, we prove Theorem~\ref{int--thm:quadratic-scalar-decay-implies-ac}.
The main techniques are similar to those in \cite{LZ}, where methods from the
compact K\"ahler--Ricci flow were extended to the noncompact setting using
suitable barrier functions.

\subsection{Self-similar potential estimates}

Let $X$ be a non-compact smooth K\"ahler--Ricci shrinker. Put
\(
        \rho:=\sqrt{f+n}.
\)
Thus \(\rho^2=f+n\) is the standard nonnegative soliton potential,
\[
 R_\omega+|\nabla^{1,0}f|^2=\rho^2,
 \qquad
 \Delta_f\rho^2=n-\rho^2.
\]
Let \(\Phi_t\), \(t\in[-1,0)\), be the self-similar biholomorphisms generated
by \((2|t|)^{-1}\nabla f\), normalized by \(\Phi_{-1}=\operatorname{id}\), and
set
\begin{equation}\label{eq:ac-self-similar-flow}
        \omega_t=|t|\Phi_t^*\omega,
        \qquad
        F(x,t)=|t|\rho^2(\Phi_t(x)).
\end{equation}
Then \(\omega_t\) solves the K\"ahler--Ricci flow on \([-1,0)\), and
\[
        \partial_tF(x,t)=-R_\omega(\Phi_t(x)).
\]
Since complete ancient Ricci flows have nonnegative scalar curvature
\cite[Corollary~2.5]{ChenScalar}, \(F(x,t)\) is nonincreasing as
\(t\nearrow0\).

We let
\[
        \hat\omega_t=\omega-(t+1)\operatorname{Ric}(\omega),
        \qquad
        \varphi_t(x)=
        \int_{-1}^t
        \log\frac{\omega_s^n}{\omega^n}(x)\,ds .
\]
Then
\[
        \omega_t=\hat\omega_t+\sqrt{-1}\partial\bar\partial\varphi_t,
        \qquad
        \dot\varphi_t=\log\frac{\omega_t^n}{\omega^n},
        \qquad
        \partial_t\dot\varphi_t=-R_{\omega_t}.
\]

\begin{lemma}
\label{lem:ac-curvature-free-potential-estimate}
 One has
\begin{align}
 \dot\varphi_t(x)
 &=-\int_{-1}^t\frac{R_\omega(\Phi_s(x))}{|s|}\,ds,
 \label{eq:ac-potential-exact-integral}\\
 \varphi_t(x)
 &=-\int_{-1}^t(t-s)\frac{R_\omega(\Phi_s(x))}{|s|}\,ds.
 \label{eq:ac-potential-exact-integral-phi}
\end{align}
In particular,
\begin{equation}\label{eq:ac-curvature-free-potential-bound}
        -\rho^2(x)\le\varphi_t(x)\le0,
        \qquad
        0\le t\dot\varphi_t(x)\le \rho^2(x).
\end{equation}
Thus \(\varphi_t\) and \(t\dot\varphi_t\) are uniformly bounded on every
compact subset of \(X\), independently of \(t\).
\end{lemma}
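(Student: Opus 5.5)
The plan is to integrate the two evolution identities already recorded, \(\dot\varphi_t=\log(\omega_t^n/\omega^n)\) and \(\partial_t\dot\varphi_t=-R_{\omega_t}\), using the self-similar structure \eqref{eq:ac-self-similar-flow} to express \(R_{\omega_t}\) through the fixed shrinker. The integrated identities then combine with nonnegativity of scalar curvature and monotonicity of \(F\).

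\textbf{Step 1: scaling of the scalar curvature.} Since \(\omega_t=|t|\Phi_t^*\omega\) and \(\Phi_t\) is a biholomorphism, the Ricci form is pulled back unchanged, while the trace against \(\omega_t\) picks up a factor \(|t|^{-1}\). Hence
\[
R_{\omega_t}(x)=\frac{R_\omega(\Phi_t(x))}{|t|}.
\]

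\textbf{Step 2: the formula for \(\dot\varphi_t\).} At \(t=-1\) we have \(\omega_{-1}=\omega\), so \(\dot\varphi_{-1}=\log 1=0\). The identity \(\partial_t\dot\varphi_t=-R_{\omega_t}\) is the trace of the Kähler--Ricci flow equation \(\partial_t\omega_t=-\operatorname{Ric}(\omega_t)\), since \(\partial_t\log\omega_t^n=\operatorname{tr}_{\omega_t}\partial_t\omega_t\). Integrating in \(t\) from \(-1\) and using Step 1 gives \eqref{eq:ac-potential-exact-integral}.

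\textbf{Step 3: the formula for \(\varphi_t\).} By definition \(\varphi_t=\int_{-1}^t\dot\varphi_u\,du\). Substituting Step 2 and exchanging the order of integration (Fubini on the triangle \(-1\le s\le u\le t\), with nonnegative integrand) yields the kernel \(\int_s^t du=t-s\). This is \eqref{eq:ac-potential-exact-integral-phi}.

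\textbf{Step 4: the bounds.} By \cite[Corollary~2.5]{ChenScalar} we have \(R_\omega\ge0\). For \(s\in[-1,t]\) we also have \(t-s\ge0\), so both integrands are nonnegative. This gives \(\varphi_t\le0\) and \(\dot\varphi_t\le0\), and since \(t<0\) it gives \(t\dot\varphi_t\ge0\). For the other side I use two elementary kernel bounds valid for \(-1\le s\le t<0\):
\[
\frac{|t|}{|s|}\le1,\qquad \frac{t-s}{|s|}=\frac{|s|-|t|}{|s|}\le1 .
\]
Consequently both \(t\dot\varphi_t(x)=|t|\int_{-1}^tR_\omega(\Phi_s x)|s|^{-1}ds\) and \(-\varphi_t(x)\) are bounded above by \(\int_{-1}^tR_\omega(\Phi_s(x))\,ds\). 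Since \(\partial_sF=-R_\omega(\Phi_s(x))\), this integral equals
\[
F(x,-1)-F(x,t)\le F(x,-1)=\rho^2(x),
\]
where the inequality uses \(F\ge0\), because \(\rho^2=f+n\ge0\). This proves \eqref{eq:ac-curvature-free-potential-bound}. The final sentence of the lemma follows because \(\rho^2\) is continuous, hence bounded on compact sets.

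No step is a genuine obstacle. The only points needing care are the sign bookkeeping with \(t<0\) and confirming the scaling \(R_{\omega_t}=|t|^{-1}R_\omega\circ\Phi_t\). The latter could also be checked directly by differentiating \(\log\det(d\Phi_t)\) along the flow of \((2|t|)^{-1}\nabla f\), using \(\Delta f=n-R_\omega\). In that computation the contribution \(n\log|t|\) and the divergence of the vector field combine to give exactly \(-R_\omega(\Phi_t)/|t|\).
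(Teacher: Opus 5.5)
Your proposal is correct and matches the paper's proof essentially line by line. Both arguments use the scaling $R_{\omega_t}=|t|^{-1}R_\omega\circ\Phi_t$, two integrations starting from $\dot\varphi_{-1}=0$, nonnegativity of scalar curvature, and the kernel bounds $|t|/|s|\le 1$ and $(t-s)/|s|\le 1$, which reduce both estimates to $\int_{-1}^t R_\omega(\Phi_s(x))\,ds=\rho^2(x)-F(x,t)\le\rho^2(x)$; the only difference is that the paper first substitutes $t=-\tau$, $s=-u$, while you work directly in $s$.
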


\begin{proof}
Since \(R_{\omega_t}(x)=|t|^{-1}R_\omega(\Phi_t(x))\) and
\(\dot\varphi_{-1}=0\), integration of
\(\partial_t\dot\varphi_t=-R_{\omega_t}\) gives
\eqref{eq:ac-potential-exact-integral}.  A second integration gives
\eqref{eq:ac-potential-exact-integral-phi}.  Scalar curvature is nonnegative,
so \(\dot\varphi_t\le0\) and \(\varphi_t\le0\).

Write \(t=-\tau\), where \(0<\tau\le1\).  Since
\[
    \frac{d}{du}F(x,-u)=R_\omega(\Phi_{-u}(x)),
        \qquad F(x,-1)=\rho^2(x),
\]
the integral formulas and \(a/u\le1\), \((u-a)/u\le1\), give
\begin{align*}
 t\dot\varphi_t(x)
 &=\tau\int_\tau^1\frac{R_\omega(\Phi_{-u}(x))}{u}\,du
 \le \int_\tau^1R_\omega(\Phi_{-u}(x))\,du,\\
 -\varphi_t(x)
 &=\int_\tau^1\frac{u-\tau}{u}R_\omega(\Phi_{-u}(x))\,du
 \le \int_\tau^1R_\omega(\Phi_{-u}(x))\,du.
\end{align*}
The last integral equals \(\rho^2(x)-F(x,t)\le \rho^2(x)\), proving
\eqref{eq:ac-curvature-free-potential-bound}.
\end{proof}

\begin{lemma}[Convergence of the K\"ahler currents]
\label{lem:ac-current-convergence}
There is a locally bounded function \(\varphi_0\)
such that \(\varphi_t\downarrow\varphi_0\) pointwise and in
\(L^1_{\rm loc}\), and
\[
        \omega_t\longrightarrow
        \omega_0=\sqrt{-1}\partial\bar\partial(f+\varphi_0)
\]
in the sense of currents as \(t\nearrow0\).
Moreover, \(F_0:=\lim_{t\nearrow0}F(\cdot,t)\) exists and
\(F_0=\rho^2+\varphi_0\).
\end{lemma}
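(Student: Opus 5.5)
The plan is to exploit the monotonicity furnished by Lemma~\ref{lem:ac-curvature-free-potential-estimate}, together with the identity $\omega_t=\hat\omega_t+\sqrt{-1}\partial\bar\partial\varphi_t$. The key observation is that $\hat\omega_t=\omega-(t+1)\operatorname{Ric}(\omega)$ is affine in $t$. Using the shrinker equation $\operatorname{Ric}(\omega)=\omega-\sqrt{-1}\partial\bar\partial f$, we can rewrite it as
\[
 \hat\omega_t=-t\,\omega+(t+1)\sqrt{-1}\partial\bar\partial f .
\]
This converges smoothly to $\sqrt{-1}\partial\bar\partial f$ on compact sets as $t\nearrow0$. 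Hence it suffices to prove that $\varphi_t$ converges in $L^1_{\rm loc}$ to a locally bounded limit $\varphi_0$; convergence of $\omega_t$ as currents then follows, since $\sqrt{-1}\partial\bar\partial$ is continuous under $L^1_{\rm loc}$ convergence of distributions.

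For the monotonicity, differentiate \eqref{eq:ac-potential-exact-integral-phi} (or simply use $\partial_t\varphi_t=\dot\varphi_t\le0$, which follows from \eqref{eq:ac-potential-exact-integral} and $R_\omega\ge0$). This shows that $t\mapsto\varphi_t(x)$ is nonincreasing as $t\nearrow0$. By \eqref{eq:ac-curvature-free-potential-bound} it is also bounded below by $-\rho^2(x)$, which is locally bounded. So the pointwise limit $\varphi_0(x):=\lim_{t\nearrow0}\varphi_t(x)$ exists and satisfies $-\rho^2\le\varphi_0\le0$. Because the $\varphi_t$ are smooth and uniformly bounded on compact sets, dominated (or monotone) convergence gives $\varphi_t\to\varphi_0$ in $L^1_{\rm loc}$. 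Pairing with test forms then gives $\omega_t\to\sqrt{-1}\partial\bar\partial f+\sqrt{-1}\partial\bar\partial\varphi_0=\sqrt{-1}\partial\bar\partial(f+\varphi_0)$ weakly. As a side remark, $f+\varphi_0$ is the decreasing limit of the functions $-t\,\cdot\,(\text{local potential of }\omega)+(t+1)f+\varphi_t$, which are plurisubharmonic up to smooth errors, so $\omega_0$ is a closed positive current. This is not needed for the statement.

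For $F_0$, recall that $F(x,t)$ is nonincreasing in $t$ and nonnegative because $\rho^2\ge0$, so $F_0$ exists pointwise. To identify it, the plan is to compare the integrands: $\rho^2(x)-F(x,t)=\int_\tau^1R_\omega(\Phi_{-u}(x))\,du$ with $\tau=-t$, while
\[
 -\varphi_t(x)=\int_\tau^1\Bigl(1-\frac{\tau}{u}\Bigr)R_\omega(\Phi_{-u}(x))\,du .
\]
Their difference is $\tau\int_\tau^1 R_\omega(\Phi_{-u}(x))\,u^{-1}\,du=t\dot\varphi_t(x)$. The main obstacle is therefore to show that $t\dot\varphi_t(x)\to0$ as $\tau\to0$, given only the bound $0\le t\dot\varphi_t\le\rho^2$. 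I would handle this with a splitting argument: fix $\delta\in(0,1)$ and split the integral at $u=\delta$. On $[\delta,1]$ the integral is bounded by $\tau\delta^{-1}\rho^2(x)$, which tends to $0$. On $[\tau,\delta]$, use $\tau/u\le1$ to bound the contribution by $\int_\tau^\delta R_\omega(\Phi_{-u}(x))\,du=F(x,-\delta)-F(x,t)$. Letting $\tau\to0$ and then $\delta\to0$, this tends to $0$ because the monotone limit $F_0$ exists. Hence $t\dot\varphi_t\to0$ and $F_0=\rho^2+\varphi_0$.
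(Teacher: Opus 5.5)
Your proof is correct and follows the paper's argument. Monotonicity of $\varphi_t$ together with $-\rho^2\le\varphi_t\le0$ gives $\varphi_0$ and $L^1_{\rm loc}$ convergence; the current limit comes from $\omega_t=\hat\omega_t+\sqrt{-1}\partial\bar\partial\varphi_t$ together with $\omega-\operatorname{Ric}(\omega)=\sqrt{-1}\partial\bar\partial f$; and $F_0$ is identified from the integral representations. The only difference is bookkeeping: the paper lets $t\nearrow0$ directly inside \eqref{eq:ac-potential-exact-integral-phi}, leaving the monotone-convergence step implicit, whereas you use the identity $F(\cdot,t)=\rho^2+\varphi_t-t\dot\varphi_t$ and show $t\dot\varphi_t\to0$ by splitting at $u=\delta$, which makes that interchange of limit and integral explicit.
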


\begin{proof}
Lemma \ref{lem:ac-curvature-free-potential-estimate} gives both the
monotonicity of \(\varphi_t\) and the time-independent local bound
\(-\rho^2\le\varphi_t\le0\).  Hence \(\varphi_t\) has a locally bounded pointwise
limit \(\varphi_0\), and dominated convergence on compact sets gives
convergence in \(L^1_{\rm loc}\).  Passing to the limit against compactly supported test
forms in
\[
 \omega_t=\omega-(t+1)\operatorname{Ric}(\omega)
       +\sqrt{-1}\partial\bar\partial\varphi_t
\]
and using \(\omega-\operatorname{Ric}(\omega)
=\sqrt{-1}\partial\bar\partial f\) proves the assertion.
Since \(\partial_tF=-R_\omega(\Phi_t(\cdot))\le0\) and \(F\ge0\), the
pointwise limit \(F_0\) exists.  Letting \(t\nearrow0\) in
\eqref{eq:ac-potential-exact-integral-phi} gives
\[
        \varphi_0(x)
        =-\int_{-1}^0R_\omega(\Phi_s(x))\,ds,
        \qquad
        F_0(x)=\rho^2(x)+\varphi_0(x).
\]
\end{proof}

\subsection{Estimates along projective factor}

Fix the homogeneous ambient system furnished by the polarized
Fano fibration \cite{SZ}.  Thus
\[
 \pi=(h_1,\ldots,h_N):X\longrightarrow Y\subset\mathbb C^N,
 \qquad
 \Psi=[s_0:\cdots:s_M]:X\longrightarrow\mathbb P^M,
\]
where the \(h_\alpha\) are homogeneous affine generators and the
basepoint-free eigensections \(s_j\) belong to
\(H^0(X,(-K_X)^\ell)\) for one \(\ell\geq1\).  We choose the system so
that \((\pi,\Psi)\) is an embedding; hence the form
\(\omega_{\mathcal A}\) below is K\"ahler.  Let
\[
 \omega_{\rm aff}:=\sqrt{-1}\sum_{\alpha=1}^N
       \partial h_\alpha\wedge\bar\partial\overline{h_\alpha},
 \qquad
 \omega_{\mathcal A}:=\omega_{\rm aff}
       +\frac1\ell\Psi^*\omega_{\rm FS}.
\]
  Let \(h_\omega\) denote
the Hermitian metric on \(-K_X\) induced by \(\omega^n\), and set
\[
 S:=\sum_{j=0}^M|s_j|_{h_\omega^\ell}^2,
 \qquad
 \psi:=f-\frac1\ell\log S.
\]
Then we have 
\begin{equation}\label{equation for S}
        \omega
 =\frac1\ell\Psi^*\omega_{\rm FS}
  +\sqrt{-1}\partial\bar\partial\psi\qquad  \operatorname{Ric}(\omega)
 =\frac1\ell\Psi^*\omega_{\rm FS}
  +\sqrt{-1}\partial\bar\partial(-\frac1\ell\log S). 
\end{equation}

\begin{lemma}[Logarithmic control of the projective denominator]
\label{lem:projective-denominator-curvature-free}
If \(R_\omega\) is bounded, then there exists $C>0$ such that
\[
       |\log S|\leq C\log(1+\rho^2)
\]
\end{lemma}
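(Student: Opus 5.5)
The plan is to derive a differential inequality for $\log S$ along the flow of $\nabla f$ and integrate it. First, $\log S$ is homogeneous up to a bounded error in the following sense. Each $s_j$ is a $\mathbb T$-eigensection of $-\ell K_X$, and $S$ is computed with $h_\omega^\ell=e^{\ell f}h^\ell$, where $h=e^{-f}h_{\omega^n}$. So $S=e^{\ell f}\sum_j|s_j|^2_{h^\ell}$. Formula \eqref{eq:charge-connection-operator} says $\nabla^{h^\ell}_{\nabla^{1,0}f}s_j=(\lambda_j-\ell f)s_j$. Taking real parts gives
\[
\nabla f\cdot|s_j|^2_{h^\ell}=2(\lambda_j-\ell f)|s_j|^2_{h^\ell},
\]
which is the same computation as in \eqref{eq:limit-section-charge-divergence}, with the factor conventions for the real gradient to be checked. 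Hence
\[
\nabla f\cdot\log S=2\ell f+\frac{\sum_j2(\lambda_j-\ell f)|s_j|^2_{h^\ell}}{\sum_j|s_j|^2_{h^\ell}}\in\bigl[2\min_j\lambda_j,\,2\max_j\lambda_j\bigr].
\]
So $|\nabla f\cdot\log S|\le C_0$, where $C_0$ depends only on the finitely many weights $\lambda_j$. Notice that this step does not yet use the bound on $R_\omega$.

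Second, I will integrate along the integral curves of $\nabla f/|\nabla f|^2$, on which $\rho^2=f+n$ increases at unit rate. Along such a curve,
\[
\frac{d}{d\rho^2}\log S=\frac{\nabla f\cdot\log S}{|\nabla f|^2}.
\]
Here I use the bound on $R_\omega$. Since $|\nabla^{1,0}f|^2=\rho^2-R_\omega$ and $R_\omega\le C_1$, we get $|\nabla f|^2\ge c(\rho^2-C_1)$, so for $\rho^2\ge 2C_1+1$ the derivative is bounded by $C/\rho^2$. Starting from a point $x$ with $\rho^2(x)$ large, I flow backward along $-\nabla f/|\nabla f|^2$ until $\rho^2$ reaches a fixed level $B=2C_1+1$. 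The backward flow decreases $\rho^2$ at unit rate and stays in the region where $|\nabla f|$ is bounded below, so it reaches $\{\rho^2=B\}$ in time $\rho^2(x)-B$. Integrating then gives
\[
|\log S(x)-\log S(y)|\le C\log\bigl(\rho^2(x)/B\bigr),
\]
where $y$ lies in the compact set $\{\rho^2=B\}$.

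Third, $\log S$ is bounded on the compact sublevel set $\{\rho^2\le B\}$, because the $s_j$ are basepoint free and hence $S>0$ there. Combining this with the second step gives $|\log S|\le C\log(1+\rho^2)$ everywhere, which is Lemma~\ref{lem:projective-denominator-curvature-free}.

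The main obstacle is the first step, namely checking the charge identity with the metric $h_\omega$ in place of $h$, and the real versus $(1,0)$ factor conventions. One has to confirm that the $f$-terms cancel exactly, leaving only the bounded weights; if they did not, a term like $\ell f/\rho^2$ would produce linear rather than logarithmic growth. If $R_\omega$ enters the identity through $|\nabla^{1,0}f|^2=\rho^2-R_\omega$, the boundedness of $R_\omega$ should keep the error bounded. The second place the hypothesis is essential is the lower bound on $|\nabla f|$ used in the second step.
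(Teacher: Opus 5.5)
Your proposal is essentially the paper's proof. The paper flows along $X_0=\tfrac12\nabla f$ from the level set $\{\rho^2=r_0\}$, bounds $|X_0\log S|$, and converts flow time into $\log\rho^2$ via the return-time estimate $\log(\rho^2/r_0)\le s\le\log(\rho^2/r_0)+O(1)$. That is the same integration as yours, along the same integral curves, parametrized by flow time instead of by $\rho^2$.

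One correction to your first step: $\nabla f\cdot(\ell f)=\ell|\nabla f|^2=2\ell(\rho^2-R_\omega)$, not $2\ell f$. The correct identity is therefore $\nabla f\cdot\log S=2\ell(n-R_\omega)+2\sum_j\lambda_j|s_j|^2_{h^\ell}\big/\sum_j|s_j|^2_{h^\ell}$, which matches the paper's $X_0\log S=\ell(n-R_\omega)+O(1)$. So the boundedness of $R_\omega$ is already needed in that step, exactly as your fallback anticipated, and the rest of your argument goes through unchanged.
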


\begin{proof}
Let \(X_0=\frac12\nabla f\), let \(\gamma_s\) be its flow, and choose a
regular value \(r_0>\sup_XR_\omega+1\) above all critical values of
\(\rho^2\).  Every point of \(\{\rho^2\geq r_0\}\) has the form
\(x=\gamma_s(y)\), with \(y\in\Sigma:=\{\rho^2=r_0\}\).  Along this flow,
\[
 \frac d{ds}\rho^2(\gamma_s y)=\rho^2-R_\omega.
\]
Consequently, if \(K:=\sup_XR_\omega\),
\begin{equation}\label{return time}
         \log\frac{\rho^2(x)}{r_0}
 \leq s
 \leq \log\frac{\rho^2(x)-K}{r_0-K}
 =\log\frac{\rho^2(x)}{r_0}+O(1).
\end{equation}
If \(\nu_j\) is the weight of \(s_j\) under the canonical lift of
\(X_0\), then
\[
 X_0\log S
 =\ell(n-R_\omega)
  +2\frac{\sum_j\nu_j|s_j|_{h_\omega^\ell}^2}{S}.
\]
The final quotient is bounded between the smallest and largest weights,
so \( |X_0\log S|\leq C\).  Since \(S\) has a positive minimum and a
finite maximum on the compact level \(\Sigma\), integration along
\(\gamma_s\) gives \( |\log S(x)|\leq C(1+s)\).  The return-time bounds \eqref{return time}
prove the assertion.
\end{proof}

Bounded scalar curvature also gives a polynomial lower bound for the
shrinker metric along the projective factor.
\begin{lemma}
\label{lem:projective-trace-polynomial}
Assume that \(R_\omega\) is bounded above.  Then there are
\(C,A<\infty\) such that
\begin{equation}\label{eq:projective-trace-polynomial}
\operatorname{tr}_\omega\Psi^*\omega_{\rm FS}
 \le C(1+\rho^2)^A.
\end{equation}
\end{lemma}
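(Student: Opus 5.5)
We prove \eqref{eq:projective-trace-polynomial} with a Chern--Lu type maximum principle, globalized by a polynomial barrier in \(\rho^2\), in the spirit of \cite[Section~4]{LZ}. Set \(\theta:=\frac1\ell\Psi^*\omega_{\rm FS}\) and \(u:=\operatorname{tr}_\omega\theta\). The map \(\Psi\) is holomorphic into \(\mathbb P^M\), whose bisectional curvature is bounded above by a constant \(K_0\). The Chern--Lu inequality therefore gives, on the regular set \(\{u>0\}\),
\[
 \Delta_\omega\log u\geq -\frac{\langle\operatorname{Ric}(\omega),\theta\rangle_\omega}{u}-2K_0\,u .
\]
The shrinker equation gives \(\operatorname{Ric}(\omega)=\omega-\sqrt{-1}\partial\bar\partial f\). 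We will work with the drift Laplacian \(\Delta_f=\Delta_\omega-\nabla^{1,0}f\cdot\). The \(\sqrt{-1}\partial\bar\partial f\) term combines with the drift to give the Lie derivative of \(\theta\) along \(\nabla f\). Since \(\Psi\) is equivariant for the canonical lift, \(\mathcal L_{X_0}\theta=\sqrt{-1}\partial\bar\partial(X_0\tfrac1\ell\log|\cdot|^2)\) is governed by the bounded weight quotient \(\sum_j\nu_j|s_j|^2/S\) already used in Lemma~\ref{lem:projective-denominator-curvature-free}. This yields a differential inequality of the form
\[
 \Delta_f\log u\geq -C_1-C_2\,u .
\]
The expected difficulty is that the term coming from \(\mathcal L_{X_0}\theta\) is not obviously bounded by \(C u\). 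To handle it, we will first use \eqref{equation for S}, which gives \(\theta=\operatorname{Ric}(\omega)+\frac1\ell\sqrt{-1}\partial\bar\partial\log S\), and rewrite the bad term through \(\Delta_f\) applied to the bounded-derivative quantity \(X_0\log S\). That quantity is absorbed by adding it to the test function.

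Next we bring in the second potential \(\psi=f-\frac1\ell\log S\), for which \(\omega=\theta+\sqrt{-1}\partial\bar\partial\psi\). Taking the trace gives \(\Delta_\omega\psi=n-u\), so
\[
 \Delta_f\psi=n-u-|\nabla^{1,0}f|^2+\tfrac1\ell\langle\nabla f,\nabla\log S\rangle .
\]
Bounded scalar curvature gives \(|\nabla^{1,0}f|^2=\rho^2-R_\omega\geq\rho^2-K\). The last term is \(O(1)\) because \(X_0\log S\) is bounded. Hence \(-\psi\) furnishes a term \(+u\) in \(\Delta_f(-\psi)\), which is the standard Aubin--Yau trick. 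We will consider
\[
 Q:=\log u-A\psi-B\log(1+\rho^2),
\]
choosing \(A>C_2\). By Lemma~\ref{lem:projective-denominator-curvature-free}, \(\psi=\rho^2-n+O(\log(1+\rho^2))\). The term \(-A\psi\) therefore behaves like \(-A\rho^2\), which dominates at infinity, so \(Q\to-\infty\) provided \(u\) grows at most exponentially along the flow of \(X_0\).

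That growth condition follows from \(\mathcal L_{X_0}\theta\) being controlled and from \(\frac{d}{ds}\omega\) along the flow equalling \(\omega-\operatorname{Ric}\), so the comparison can be integrated along flow lines as in the proof of Lemma~\ref{lem:projective-denominator-curvature-free}. With that in hand, \(Q\) attains a maximum. At the maximum point, \(\Delta_fQ\leq0\), which yields \((A-C_2)u\leq C(1+\rho^2)\), using \(\Delta_f\log(1+\rho^2)=(n-\rho^2)/(1+\rho^2)-|\nabla\rho^2|^2/(1+\rho^2)^2=O(1)\).

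Feeding this back bounds \(u\) everywhere by \(e^{A\psi}\) times a polynomial. That bound is exponential rather than polynomial, so we will refine it. Along each flow line of \(X_0\), the quantity \(\log u\) changes at bounded rate, because \(\mathcal L_{X_0}\omega=\omega-\operatorname{Ric}\) has trace against \(\theta\) controlled by \(u\) and \(R_\omega\), and \(\mathcal L_{X_0}\theta\) is controlled by weights. Integrating from the compact level \(\Sigma\) as in \eqref{return time} then gives \(\log u\leq C(1+s)\leq C\log(1+\rho^2)+C\), which is \eqref{eq:projective-trace-polynomial}.

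The main obstacle is this flow-line estimate \(|X_0\log u|\leq C\). We need \(|\langle\operatorname{Ric},\theta\rangle_\omega|\leq C u\). Bounded \(R_\omega\) together with \(\operatorname{Ric}=\theta+\frac1\ell\sqrt{-1}\partial\bar\partial\log S\) does not obviously give this. If it fails, we will fall back on the maximum-principle route, replacing \(-A\psi\) by \(-A\log(1+\rho^2)\) combined with \(\varepsilon\psi\)-type corrections, and letting the sign of \(\Delta_f\) at large \(\rho^2\) provide the polynomial barrier.
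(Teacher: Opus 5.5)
\textbf{There is a genuine gap, and it sits where you suspect.} Two of your steps depend on an a priori growth bound for $u$ along the flow of $X_0$:
\begin{enumerate}[label=(\roman*)]
\item the claim that $Q=\log u-A\psi-B\log(1+\rho^2)$ attains a maximum on the noncompact $X$;
\item the final upgrade from an exponential to a polynomial bound.
\end{enumerate}
That growth bound requires controlling
$X_0\log u=u^{-1}\operatorname{tr}_\omega\mathcal L_{X_0}\theta-1+u^{-1}\langle\operatorname{Ric}(\omega),\theta\rangle_\omega$.
The last term is controlled only by $|\operatorname{Ric}(\omega)|$, and a bound on $R_\omega$ says nothing about it. So neither step goes through, and the fallback you sketch is not an argument.

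Even granting the maximum, your Aubin--Yau term is the wrong one. With $\psi=f-\frac1\ell\log S\approx\rho^2$, the term $-A\psi$ can only give $u\le Ce^{A\psi}(1+\rho^2)^B$, which is exponential in $\rho^2$. This choice of $\psi$ is what forces you into the unavailable refinement.

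\textbf{How the paper avoids both problems.}

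First, it uses $-\frac1\ell\log S=\psi-f$ in place of $\psi$. From $\operatorname{Ric}(\omega)=\frac1\ell\Psi^*\omega_{\rm FS}+\sqrt{-1}\partial\bar\partial(-\frac1\ell\log S)$ one gets $\Delta_\omega(-\frac1\ell\log S)=R_\omega-U$. Together with $X_0\log S=\ell(n-R_\omega)+O(1)$, this gives $\Delta_f(-\frac1\ell\log S)=n-U+O(1)$. This still supplies the $+U$ term. But by Lemma~\ref{lem:projective-denominator-curvature-free} it is only $O(\log(1+\rho^2))$, so the resulting bound is polynomial.

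Second, it never uses a global maximum. It takes $G=(U+1)S^{a/\ell}(1+\rho^2)^{-B}$ with $B\gg a>C_0$, so that $\Delta_f\log G\ge(a-C_0)U-C$. For $B$ large the $\log S$ lemma gives $G\le U+1$, and this upgrades to $\Delta_fG\ge cG^2-CG$. At a maximum of $\eta_RG$, with $\eta_R=\eta(\rho^2/R)$, the quadratic term absorbs the cutoff errors $|\nabla\eta_R|^2/\eta_R\le C/R$ and $\Delta_f\eta_R\ge-C/R$. This yields $\eta_RG\le C$ uniformly in $R$, with no growth assumption on $U$.

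\textbf{Two smaller corrections.}
\begin{enumerate}[label=(\alph*)]
\item The Ricci term in Chern--Lu enters as $+u^{-1}\langle\operatorname{Ric}(\omega),\theta\rangle_\omega$, not with a minus sign. With the correct sign, this term cancels against $X_0\log u$ via $\mathcal L_{X_0}\omega=\omega-\operatorname{Ric}(\omega)$, which is what turns $\Delta_\omega$ into $\Delta_f$. With your sign you would get the opposite drift.
\item $\mathcal L_{X_0}\theta$ is not a difficulty. By equivariance, $\mathcal L_{X_0}\Psi^*\omega_{\rm FS}=\Psi^*(\mathcal L_V\omega_{\rm FS})$ for the induced holomorphic field $V$ on the compact $\mathbb P^M$. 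It therefore lies between $\pm C\Psi^*\omega_{\rm FS}$, and no detour through $\log S$ is needed.
\end{enumerate}
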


\begin{proof}
Put
\[
        U=\frac1\ell\operatorname{tr}_\omega\Psi^*\omega_{\rm FS}.
\]
For some \(C_0<\infty\), the holomorphic map Chern--Lu inequality gives,
first on \(\{U>0\}\),
\[
 \Delta_\omega\log U
 \ge
 \frac1{\ell U}
 \left\langle\operatorname{Ric}(\omega),\Psi^*\omega_{\rm FS}
 \right\rangle_\omega
 -C_0U-C_0.
\]
The flow of \(X_0=\frac12\nabla f\) induces a holomorphic vector field on
\(\mathbb P^M\), which we again denote by \(X_0\).  Equivariance gives
\[
 \mathcal L_{X_0}(\Psi^*\omega_{\rm FS})
 =
 \Psi^*(\mathcal L_{X_0}\omega_{\rm FS}).
\]
Since the target is compact and the projective system is fixed,
\[
 -C\Psi^*\omega_{\rm FS}
 \le
 \mathcal L_{X_0}(\Psi^*\omega_{\rm FS})
 \le
 C\Psi^*\omega_{\rm FS}.
\]
On the other hand,
\(\mathcal L_{X_0}\omega=\sqrt{-1}\partial\bar\partial f
=\omega-\operatorname{Ric}(\omega)\).  Differentiating
\(U=\ell^{-1}\operatorname{tr}_\omega\Psi^*\omega_{\rm FS}\) therefore
yields
\[
 X_0U
 =\frac1\ell\operatorname{tr}_\omega\!
   \left(\mathcal L_{X_0}(\Psi^*\omega_{\rm FS})\right)
  -U+
  \frac1\ell\left\langle
  \operatorname{Ric}(\omega),\Psi^*\omega_{\rm FS}
  \right\rangle_\omega.
\]
It follows directly that
\begin{equation}\label{eq:projective-drift-chern-lu}
        \Delta_f\log(U+1)\ge-C_0U-C_0.
\end{equation}

As noted above, we have
\[
 X_0\log S
 =\ell(n-R_\omega)
  +2\frac{\sum_j\nu_j|s_j|_{h_\omega^\ell}^2}{S}
 =\ell(n-R_\omega)+O(1).
\]  
By \eqref{equation for S}, we know that \(\Delta_\omega(-\frac1\ell\log S)=R_\omega-U\). Therefore we obtain
\begin{equation}\label{equation of log S}
             \Delta_f(-\frac1\ell\log S)=n-U+O(1).
\end{equation}

 Introduce the test function
\[
        G=(U+1)e^{\frac a\ell\log S}(1+\rho^2)^{-B},
\]where $B\gg a\gg C_0$.
Combining \eqref{eq:projective-drift-chern-lu}, \eqref{equation of log S} and
\(
 \Delta_f\log(1+\rho^2)
 =\frac{n-\rho^2}{1+\rho^2}
  -\frac{|\partial \rho^2|_\omega^2}{(1+\rho^2)^2},
\)
which is uniformly bounded, yields, after increasing \(C\),
\[
        \Delta_f\log G\ge(a-C_0)U-C.
\]
  Since
\(\Delta_fG=G(\Delta_f\log G+|\partial\log G|_\omega^2)\), it follows that
\begin{equation}\label{eq:projective-barrier-inequality}
        \Delta_fG\ge cG^2-CG.
\end{equation}

We use the standard shrinker cutoffs constructed as in
\cite[Section~3]{LW}.  Namely, fix one nonincreasing function
\(\eta\in C^\infty([0,\infty),[0,1])\) such that
\[
 \eta=1\ \text{on }[0,1],\qquad
 \eta=0\ \text{on }[2,\infty),\qquad
 (\eta')^2\le C\eta,\qquad |\eta''|\le C.
\]
For \(R\ge2n+1\), write
\[
        \eta_R=\eta(\rho^2/R).
\]
Since \(\rho\) is proper, \(\eta_R\) has compact support.  On its
transition region, \(R\le \rho^2\le2R\),
\[
 \frac{|\nabla\eta_R|^2}{\eta_R}\le\frac CR,
 \qquad
 \Delta_f\eta_R
 =\frac{\eta'}R(n-\rho^2)
  +\frac{\eta''}{R^2}|\partial \rho^2|_\omega^2
 \ge-\frac CR.
\]
Here the first term in \(\Delta_f\eta_R\) is nonnegative, since
\(\eta'\le0\) and \(\rho^2>n\), while
\(|\partial \rho^2|_\omega^2\le \rho^2\le2R\).

The compactly supported function \(\eta_RG\) attains a positive maximum at
some point \(x_R\) where \(\eta_R>0\).  There
\[
        \nabla G=-\frac{G}{\eta_R}\nabla\eta_R,
        \qquad
        \Delta_f(\eta_RG)\le0.
\]
Using \eqref{eq:projective-barrier-inequality} and these cutoff bounds, we
obtain at \(x_R\)
\[
 0
 \ge \Delta_f(\eta_RG)\ge c\eta_RG^2-C\eta_RG
      -\frac CRG
      -2G\frac{|\partial\eta_R|_\omega^2}{\eta_R}.
\]
After division by \(G(x_R)>0\), this gives
\(\eta_R(x_R)G(x_R)\le C\), uniformly in \(R\).  Hence \(G\le C\) on
\(\{\rho^2\le R\}\), and exhaustion gives \(G\le C\) globally.  Therefore
\[
        U+1\le Ce^{-\frac{a}{\ell}\log S}(1+\rho^2)^B
        \le C(1+\rho^2)^A.
\]
\end{proof}

\subsection{Metric control and curvature decay}After the preparation in the previous sections, the following argument is similar to that in \cite[Section~4]{LZ}.

\begin{lemma}
\label{lem:ac-potential-scalar-control}
Assume
\(
        R_\omega(x)\rightarrow 0,
\) as $x\rightarrow \infty$. Then there exists \(K\Subset X\) is compact such that  \(\inf_{X\setminus K} F_0\geq 1\) and for any $\delta>0$, there exists $C_\delta$ such that on $X\setminus K$, we have
\[
        |\dot \varphi_t|\leq \delta \log |t|^{-1}+C_\delta.
\]
\end{lemma}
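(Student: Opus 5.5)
Both claims follow from the exact integral formula \eqref{eq:ac-potential-exact-integral} and the monotone quantity \(F\). No new analytic input is needed beyond the hypothesis \(R_\omega\to0\). Two preliminary remarks are used throughout. First, since \(R_\omega\) is continuous and tends to zero at infinity, \(K_0:=\sup_XR_\omega<\infty\). Second, by the definition of \(F\),
\[
 \rho^2(\Phi_s(x))=\frac{F(x,s)}{|s|}\geq F(x,s)\geq F_0(x)\qquad(s\in[-1,0)),
\]
where the last inequality holds because \(F(x,\cdot)\) is nonincreasing in \(s\) (Lemma~\ref{lem:ac-current-convergence} and the remark before it).

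\emph{Choice of \(K\).} Fix \(A\geq1\) with \(R_\omega\leq\tfrac12\) on \(\{\rho^2\geq A\}\), and set \(K:=\{\rho^2\leq A+1\}\), which is compact by properness of \(\rho^2\). Let \(x\notin K\), so \(F(x,-1)=\rho^2(x)>A+1\). We run a continuity argument in \(s\). As long as \(F(x,s)\geq A\), the display above gives \(\rho^2(\Phi_s(x))\geq A\), hence \(\partial_sF(x,s)=-R_\omega(\Phi_s(x))\geq-\tfrac12\). Since the time interval \([-1,0)\) has length one, \(F\) drops by at most \(\tfrac12\) in total. It therefore never reaches \(A\), and \(F(x,s)\geq A+\tfrac12\) for all \(s\). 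Letting \(s\nearrow0\) gives \(F_0(x)\geq A+\tfrac12\geq1\) on \(X\setminus K\).

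\emph{The bound on \(\dot\varphi_t\).} Fix \(\delta>0\), and choose \(A_\delta\) with \(R_\omega\leq\delta\) on \(\{\rho^2\geq A_\delta\}\). Put \(s_\delta:=\min\{1,A/A_\delta\}\). For \(x\notin K\) and \(|s|\leq s_\delta\), the display above together with \(F_0(x)\geq A\) gives \(\rho^2(\Phi_s(x))\geq A/|s|\geq A_\delta\), and hence \(R_\omega(\Phi_s(x))\leq\delta\). Since \(\dot\varphi_t\leq0\), we split \(-\dot\varphi_t=\int_{-1}^tR_\omega(\Phi_s(x))|s|^{-1}\,ds\) at \(s=-s_\delta\). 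On \([-1,-s_\delta]\) the integrand is at most \(K_0/|s|\), contributing at most \(K_0\log(1/s_\delta)=:C_\delta\). On \([-s_\delta,t]\) the integrand is at most \(\delta/|s|\), contributing at most \(\delta\log|t|^{-1}\). Together these give \(|\dot\varphi_t|\leq\delta\log|t|^{-1}+C_\delta\) on \(X\setminus K\).

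The only delicate point is the continuity argument that keeps \(F\) bounded below. It works precisely because \(F\) is monotone and \(\partial_sF=-R_\omega\circ\Phi_s\) is small once \(\Phi_s(x)\) stays far out, which is itself guaranteed by the lower bound on \(F\).
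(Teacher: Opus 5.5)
Your proof is correct and follows the same route as the paper. First, $F_0\geq 1$ off a compact set. Second, monotonicity of $F$ gives $\rho^2(\Phi_t(x))=F(x,t)/|t|\geq F_0(x)/|t|\to\infty$. Third, the exact integral formula for $\dot\varphi_t$ together with $R_\omega\to0$ yields the $\delta\log|t|^{-1}+C_\delta$ bound.

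The only difference is cosmetic. The paper obtains $F_0\geq 1$ directly from $F(x,t)\geq\rho^2(x)-\sup_X R_\omega$, which comes from integrating $|\partial_t F|\leq\sup_X R_\omega$ over an interval of length one. That makes your continuity argument unnecessary. In the other direction, you write out the splitting of the integral at $s=-s_\delta$, which the paper leaves implicit.
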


\begin{proof}The first conclusion follows from the properness of the soliton potential and the boundedness of scalar curvature. Note that we have \(F_0(x)\ge 1\), then monotonicity
gives \(F(x,t)\ge 1\) for \(-1\le t<0\), and hence
\(\rho^2(\Phi_{|t|}(x))\ge 1/|t|\rightarrow \infty\) as $t\rightarrow 0$.  Then \eqref{lem:ac-curvature-free-potential-estimate} gives the desired estimate.
\end{proof}

\begin{proposition}
\label{prop:ac-projective-schwarz-lower-bound}
Assume \(
        R_\omega(x)\rightarrow 0,
\) as $x\rightarrow \infty$. 
let \(K\Subset X\) be compact and \(\inf_K F_0>0\). For every $\delta>0$, there is
\(C_{K,\delta}<\infty\) such that, for every \(-1\le t<0\),
\[
        C_{K,\delta}^{-1}|t|^{\delta}\omega_{\mathcal A}
        \le\omega_t\le
        C_{K,\delta}|t|^{-\delta}\omega_{\mathcal A}
        \qquad\text{on }K.
\]
\end{proposition}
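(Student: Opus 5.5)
The plan is to follow the compact Kähler--Ricci flow strategy of \cite[Section~4]{LZ}: a Schwarz/Chern--Lu lower bound for \(\omega_t\) against the fixed ambient form \(\omega_{\mathcal A}\), and then an upper bound from the volume form together with the potential estimates of Lemma~\ref{lem:ac-potential-scalar-control}. The ambient form splits as \(\omega_{\mathcal A}=\omega_{\rm aff}+\ell^{-1}\Psi^*\omega_{\rm FS}\), and I treat the two summands separately.

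The first step is the lower bound on \(\omega_t\). For the affine part, the functions \(h_\alpha\) are homogeneous under the soliton flow. Hence \(\Phi_t^*\partial h_\alpha\) is an explicit power of \(|t|\) times \(\partial h_\alpha\) (real-part weights from \(\nabla f\)), and on \(K\) one has \(\operatorname{tr}_{\omega_t}\omega_{\rm aff}=|t|^{-1}\Phi_t^*(\operatorname{tr}_\omega\Phi_{t*}\omega_{\rm aff})\). I will prove \(\operatorname{tr}_{\omega_t}\omega_{\mathcal A}\le C_{K,\delta}|t|^{-\delta}\) by a parabolic maximum principle for
\[
Q=\log\operatorname{tr}_{\omega_t}\omega_{\mathcal A}-A\varphi_t+\varepsilon\,\text{barrier},
\]
where the barrier is built from \(\log(1+\rho^2)\) and \(\log S\) as in Lemma~\ref{lem:projective-trace-polynomial}. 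The curvature of \(\omega_{\mathcal A}\) is bounded on bounded sets: \(\omega_{\rm aff}\) is flat pulled back and \(\omega_{\rm FS}\) has bounded bisectional curvature. So the Chern--Lu inequality along the flow gives
\[
(\partial_t-\Delta_{\omega_t})\log\operatorname{tr}_{\omega_t}\omega_{\mathcal A}\le C\operatorname{tr}_{\omega_t}\omega_{\mathcal A}+C,
\]
and the term \(-A\varphi_t\) absorbs this via \(\Delta_{\omega_t}\varphi_t=n-\operatorname{tr}_{\omega_t}\hat\omega_t\), using \(\hat\omega_t\ge c\,\omega_{\mathcal A}\) locally. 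The growth at infinity is handled by the polynomial bounds of Lemmas~\ref{lem:projective-denominator-curvature-free} and \ref{lem:projective-trace-polynomial}, through the cutoff \(\eta_R\). The resulting bound involves \(\dot\varphi_t\) and \(\varphi_t\), which are \(O(\delta\log|t|^{-1}+C_\delta)\) and \(O(\rho^2)\) by Lemma~\ref{lem:ac-potential-scalar-control} and Lemma~\ref{lem:ac-curvature-free-potential-estimate}. This gives \(\omega_t\ge C^{-1}|t|^{\delta}\omega_{\mathcal A}\) on \(K\).

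The second step is the upper bound. I write \(\omega_t^n=e^{\dot\varphi_t}\omega^n\) and compare \(\omega^n\) with \(\omega_{\mathcal A}^n\) on \(K\), where they are comparable by compactness. Lemma~\ref{lem:ac-potential-scalar-control} gives \(e^{\dot\varphi_t}\le C_\delta|t|^{-\delta}\). Combined with the lower bound, the elementary inequality
\[
\operatorname{tr}_{\omega_{\mathcal A}}\omega_t\le n\,\frac{\omega_t^n}{\omega_{\mathcal A}^n}\,\bigl(\operatorname{tr}_{\omega_t}\omega_{\mathcal A}\bigr)^{n-1}
\]
yields \(\omega_t\le C|t|^{-n\delta}\omega_{\mathcal A}\); renaming \(\delta\) finishes the proof.

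I expect the main obstacle to be the maximum principle in the first step. \(X\) is noncompact and the flow is only self-similar, so the maximum of \(Q\) might escape to infinity or occur where \(F_0\) is small. That is why the hypothesis \(\inf_K F_0>0\) is needed: it ensures that points of \(K\) flow out to infinity, where \(R_\omega\to0\) makes \(\dot\varphi_t\) sublogarithmic. My first attempt will localize with \(\eta_R\) and a term \(-\varepsilon\log(1+\rho^2)\), check that its \(\Delta_{\omega_t}\) is controlled by \(\operatorname{tr}_{\omega_t}\omega\) with an error absorbed by \(-A\varphi_t\), and let \(R\to\infty\) and then \(\varepsilon\to0\).
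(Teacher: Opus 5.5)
Your second step (the upper bound from $\omega_t^n=e^{\dot\varphi_t}\omega^n\le\omega^n\le C_K\omega_{\mathcal A}^n$ on $K$, together with the eigenvalue inequality) is the paper's argument. Since $\dot\varphi_t\le0$, you do not even need Lemma~\ref{lem:ac-potential-scalar-control} there.

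The gap is in the first step. The term $-A\varphi_t$ contributes $-A\operatorname{tr}_{\omega_t}\hat\omega_t$ to the heat operator. For this to absorb the Chern--Lu error $C\operatorname{tr}_{\omega_t}\omega_{\mathcal A}$, you need $\hat\omega_t\ge c\,\omega_{\mathcal A}$ with $c$ independent of $t$, at whatever point the maximum of $Q$ occurs. This is false. Indeed $\hat\omega_t=|t|\omega+(1-|t|)\sqrt{-1}\partial\bar\partial f$, and $\sqrt{-1}\partial\bar\partial f=\omega-\operatorname{Ric}(\omega)$ vanishes on the tangent directions of any positive-dimensional component of the zero set of $\xi$, because $f$ is constant there. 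For example, along the zero section of a Feldman--Ilmanen--Knopf shrinker (which is asymptotically conical, so it satisfies your hypothesis), $\hat\omega_t$ equals only $|t|\omega$ in those directions. Since the maximum principle is global, positivity on $K$ alone does not help.

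Two further symptoms point the same way. First, if your $Q$ were bounded above you would get $\log\operatorname{tr}_{\omega_t}\omega_{\mathcal A}\le C+A\varphi_t\le C$ on $K$ with no $|t|^{-\delta}$ loss at all, so the $\dot\varphi_t$-dependence you invoke never actually enters. Second, a barrier $-\varepsilon\log(1+\rho^2)$ with $\varepsilon\to0$ cannot dominate $\log\operatorname{tr}_{\omega_t}\omega_{\mathcal A}$. That quantity is only known to grow like $A\log(1+\rho^2)$, by Lemma~\ref{lem:projective-trace-polynomial} and equivariance.

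The missing idea is to choose the potential term so that its heat operator produces minus the trace of a form pulled back from $\mathbb P^M$. The paper writes $\omega=\ell^{-1}\Psi^*\omega_{\rm FS}+\sqrt{-1}\partial\bar\partial\psi$ with $\psi=f-\ell^{-1}\log S$. It then uses $H_t=(t+1)\dot\varphi_t-\varphi_t$, for which
\[
(\partial_t-\Delta_{\omega_t})H_t=n-\operatorname{tr}_{\omega_t}\omega=n-U-\Delta_{\omega_t}\psi,\qquad U=\ell^{-1}\operatorname{tr}_{\omega_t}\Psi^*\omega_{\rm FS}.
\]
For $a>C_0$, the function
\[
Q=\log(U+1)+aH_t-a\psi-F-\bigl((a+1)n+C_0\bigr)t
\]
satisfies $(\partial_t-\Delta_{\omega_t})Q\le-(a-C_0)U\le0$.

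Growth at infinity is handled as follows. Using $\dot\varphi_t\le0$, $-\varphi_t\le\rho^2$ and Lemma~\ref{lem:projective-denominator-curvature-free}, one gets $aH_t-a\psi-F\le-\rho^2+C\log(1+\rho^2)+C$. This beats the logarithmic growth of $\log(U+1)$, so $Q\to-\infty$ at spatial infinity on each $[-1,t_0]$, and the maximum principle applies without any $\varepsilon$-barrier. On $K$, the $|t|^{-\delta}$ loss then comes exactly from $(t+1)\dot\varphi_t$ through Lemma~\ref{lem:ac-potential-scalar-control}, which is where $\inf_KF_0>0$ enters.

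This argument controls only the projective summand. The affine summand needs no absorption: the target is flat, and for the product target the Chern--Lu error involves only $U$. The paper obtains $\omega_t\ge C_K^{-1}\omega_{\rm aff}$ on $K$ directly from the homogeneity of the $h_\alpha$, via \cite[Proposition~4.4]{LZ}. That is the observation you started with but then did not use.
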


\begin{proof}
Set
\[
 U=\frac1\ell\operatorname{tr}_{\omega_t}\Psi^*\omega_{\rm FS},
 \qquad H_t=(t+1)\dot\varphi_t-\varphi_t.
\]
The parabolic Chern--Lu inequality for the holomorphic map
\(\Psi:(X,\omega_t)\to(\mathbb P^M,\ell^{-1}\omega_{\rm FS})\) gives
\[
        (\partial_t-\Delta_{\omega_t})\log(U+1)
        \le C_0U+C_0 .
\]
The potential equation and \eqref{equation for S} give
\[
        (\partial_t-\Delta_{\omega_t})H_t
        =
        n-U-\Delta_{\omega_t}\psi .
\]
Since \((\partial_t-\Delta_{\omega_t})F=-n\), choosing \(a>C_0\) and
setting
\[
 Q=\log(U+1)+aH_t-a\psi-F-\bigl((a+1)n+C_0\bigr)t
\]
gives
\[
 (\partial_t-\Delta_{\omega_t})Q\le-(a-C_0)U\le0.
\]
Fix \(t_0<0\).  Equivariance,
Lemma~\ref{lem:projective-trace-polynomial}, and
\(\rho^2\circ\Phi_t=F/|t|\) yield
\[
 U(x,t)\le C_{t_0}(1+F(x,t))^A,
 \qquad -1\le t\le t_0,
\]
for some \(A<\infty\).
Since \(R_\omega\) is bounded,
\(\partial_tF=-R_\omega\circ\Phi_t\), and \(F(\cdot,-1)=\rho^2\),
\begin{equation}\label{eq:ac-uniform-properness}
        \rho^2(x)-\sup_XR_\omega\le F(x,t)\le\rho^2(x)
        \qquad(-1\le t<0).
\end{equation}
Lemma~\ref{lem:ac-potential-scalar-control} and 
Lemma~\ref{lem:projective-denominator-curvature-free} give
\[
Q(x,t)\to-\infty
\]
 uniformly for
\(-1\le t\le t_0\), and that \(\sup_XQ(\cdot,-1)<\infty\).  Applying the
maximum principle on \(\{\rho^2\le R\}\times[-1,t_0]\) and letting
\(R\to\infty\) gives
\[
        \sup_{X\times[-1,t_0]}Q\le\sup_XQ(\cdot,-1).
\]
Since the right side is independent of \(t_0\), \(Q\) is bounded above on
\(X\times[-1,0)\).

On the compact set \(K\), \(F\) and \(\psi\) are bounded and then
Lemma~\ref{lem:ac-potential-scalar-control} gives for every $\delta>0$, there exists $C_{K,\delta}$ such that for all \(t\in[-1,0),\) 
\[
        \frac1\ell\operatorname{tr}_{\omega_t}\Psi^*\omega_{\rm FS}
        \le C_{K,\delta} |t|^{-\delta}
        \qquad\text{on }K.
\]
 \cite[Proposition~4.4]{LZ} gives
\(
        \omega_t\ge C_K^{-1}\omega_{\rm aff}
\) on $K$ for $-1\le t<0.$
Consequently, \[\omega_t\ge C_{K,\delta}^{-1}|t|^\delta\omega_{\mathcal A}.\]  Finally,
\(\dot\varphi_t\le0\) by
\eqref{eq:ac-potential-exact-integral}, so
\[
        \omega_t^n=e^{\dot\varphi_t}\omega^n\le\omega^n
        \le C_K\omega_{\mathcal A}^n
        \qquad\text{on }K.
\]
The determinant upper bound together with the lower eigenvalue bound gives upper bound of $\omega_t$, completing the proof.
\end{proof}

\begin{proof}[Proopf of Theorem \ref{int--thm:quadratic-scalar-decay-implies-ac}]
Choose \(r_0>\sup_XR_\omega+1\) and above all critical values and set
\(\Sigma=\{\rho^2=r_0\}\).
  Then we can choose
a compact neighborhood \(K\) of \(\Sigma\) with \(\inf_K F_0>0\).
Proposition~\ref{prop:ac-projective-schwarz-lower-bound} and the interior
estimates \cite[Theorem~1.1--(ii)]{SWein} then give
\(C_\delta>0\) such that
\[
 \sup_{\Sigma\times[-1   ,0)}
 |\operatorname{Rm}(\omega_t)|_{\omega_t}\le C_\delta |t|^{-\delta}.
\]
Then, arguing as in the proof of \cite[Theorem~4.6]{LZ}, we obtain that the Riemannian curvature for the shrinker metric tends to zero at infinity. It then follows from
\cite{KW,MW} that \((X,\omega)\) is asymptotically conical.
\end{proof}

\section{Conjectures}
\label{sec:filtered-partial-c0-flat}

We collect two conjectures suggested by the results in this paper.  

\begin{conjecture}[Uniform two-sided volume-ratio bound]
\label{conj:uniform-volume-growth}
For every \(n\geq1\) and \(k\in\{1,\ldots,n\}\), there is a constant
\(C_{n,k}>1\) such that every \(X\in\mathcal{KRS}(n,k)\) satisfies
\[
 C_{n,k}^{-1}
 \leq
 \lim_{r\to\infty}\frac{\operatorname{Vol}_g B_g(p,r)}{r^{2k}}
 \leq
 C_{n,k}.
\]
\end{conjecture}

Theorems~\ref{int:uniform-avr-gap} and
\ref{int:rank-one-volume-coefficient} prove the conjecture for \(k=n\) and
\(k=1\), respectively.  For \(2\leq k\leq n-1\), the current
argument gives only bounds depending on the specific K\"ahler--Ricci shrinker.




\

\begin{conjecture}[Flat convergence]
\label{conj:flat-convergence}
For every convergent sequence as in \eqref{metric-convergence}, after passing
to a subsequence, there are a scheme \(B\),
points \(b_i\to b_\infty\) in its analytic topology, flat morphisms
\[
 \mathcal X\longrightarrow B,
 \qquad
 \mathcal Y\longrightarrow B,
\]
and a projective \(B\)-morphism \(\mathcal X\to\mathcal Y\) such that
\[
 (\mathcal X_{b_i}\to\mathcal Y_{b_i})\simeq(X_i\to Y_i),
 \qquad
 (\mathcal X_{b_\infty}\to\mathcal Y_{b_\infty})
 \simeq(X_\infty\to Y_\infty).
\]

\end{conjecture}

The main missing ingredient in Conjecture~\ref{conj:flat-convergence}
is an appropriate algebraic boundedness theorem for K\"ahler--Ricci shrinkers, closely related to the boundedness conjectures formulated in
\cite[Conjecture~6.7]{SZ} and \cite[Conjecture~3.4]{Odaka}.

As a preliminary step towards Conjecture~\ref{conj:flat-convergence}, we have the following uniform estimate on the dimension of homogeneous pluri-anticanonical sections.
\begin{proposition}[Uniform finiteness for bounded-charge ]
\label{lem:bounded-charge-finiteness}
For every \(m\in\mathbb Z_{\geq0}\), and \(D\in\mathbb R\),
there is \(N=N(n,m,D)<\infty\) such that every
\(X\in\mathcal{KRS}(n)\) satisfies
\begin{equation}\label{eq:bounded-charge-space}
 \mathcal H_{m,\leq D}(X)
 :=\bigoplus_{\langle\alpha,\xi\rangle\leq D}
 H^0(X,-mK_X)_\alpha,
 \qquad \dim\mathcal H_{m,\leq D}(X)\leq N.
\end{equation}
For a sequence we write
\(\mathcal H_{i,m,\leq D}:=\mathcal H_{m,\leq D}(X_i)\).
\end{proposition}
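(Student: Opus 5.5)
We argue by contradiction and compactness, using the limit theory built in Sections~\ref{sec:uniform-entropy} and~\ref{sec:gaussian-scalar-lifting}. Suppose there are smooth shrinkers \(X_i\in\mathcal{KRS}(n)\) with \(\dim\mathcal H_{i,m,\leq D}\to\infty\). By Corollary~\ref{cor:weak-compactness} we may pass to a subsequence converging as in \eqref{eq:general-LLW-convergence}. The key quantitative input is a uniform weighted-\(L^2\) localization for sections of bounded charge. By Lemma~\ref{lem:homogeneous-section-weighted-integrability}, applied on each \(X_i\), a homogeneous \(s\in H^0(X_i,-mK_{X_i})_\alpha\) with charge \(\lambda=\langle\alpha,\xi_i\rangle\leq D\) satisfies
\[
 \int f_i|s|^2_{h_i^m}e^{-f_i}\omega_i^n=\frac{\lambda}{m+1}\int|s|^2_{h_i^m}e^{-f_i}\omega_i^n .
\]
Since distinct characters are orthogonal for \(\langle\cdot,\cdot\rangle_{G,m}\) and \(A\) is self-adjoint, the same inequality
\[
 \int f_i|s|^2e^{-f_i}\omega_i^n\leq\frac{D}{m+1}\|s\|^2_{G,m}
\]
holds for every \(s\in\mathcal H_{i,m,\leq D}\), by summing over the character decomposition. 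Because \(f_i+n\geq\frac12(d_i(p_i,\cdot)-5\sqrt2 n)_+^2\), Chebyshev gives a radius \(R=R(n,m,D)\) such that at least half of the Gaussian mass of every such \(s\) lies in \(B(p_i,R)\). This step is uniform because the constants in \eqref{eq:uniform-potential-estimates} are uniform.

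Next we turn the mass concentration into a uniform dimension bound. The local estimate of Lemma~\ref{lem:fixed-power} (on the smooth \(X_i\), with constants depending only on \(n\), \(R\) and the uniform entropy bound of Theorem~\ref{thm:uniform-entropy}) shows that the unit ball of \(\mathcal H_{i,m,\leq D}\) in the Gaussian norm is uniformly bounded in \(C^1\) on \(B(p_i,2R)\). Take an \(L^2\)-orthonormal family \(s^{(1)}_i,\dots,s^{(N_i)}_i\). The Bergman-type density \(\sum_a|s^{(a)}_i(x)|^2\) equals \(\sup\{|s(x)|^2:\|s\|_G=1\}\), and this is bounded by \(C(n,R,m)\) on \(B(p_i,R)\). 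Integrating over \(B(p_i,R)\) against \(e^{-f_i}\omega_i^n\), and using that each \(s^{(a)}_i\) has at least half its mass there, gives
\[
 N_i/2\leq C(n,R,m)\operatorname{Vol}B(p_i,R)\leq C'(n,m,D),
\]
where the last step uses the uniform volume upper bound of \cite{CaoZhou,HM}. This contradicts \(N_i\to\infty\).

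On closer inspection this argument does not need compactness at all, so it is direct. The delicate point is the pointwise supremum bound: it must hold uniformly for all \(x\in B(p_i,R)\), not just at regular points, so we use the smooth-space version of Lemma~\ref{lem:fixed-power} with the natural scale \(r\simeq r_R\) and the \(L^2\) norm on \(B(x,C_Rr)\). That local norm is dominated by \(e^{\sup_{B(p,2R)}f}\) times the Gaussian norm, and \(\sup_{B(p,2R)}f\) is controlled by \eqref{eq:uniform-potential-estimates}.

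A second point to check is the use of \eqref{eq:limit-section-first-moment} on the smooth \(X_i\). This needs the integrability from Lemma~\ref{lem:homogeneous-section-weighted-integrability}, which holds for algebraic homogeneous sections. It also needs that \(\mathcal H_{m,\leq D}\) is a finite direct sum of character spaces, so that the orthogonality argument is legitimate. If the charge happens to be negative, the moment identity still gives the bound, since \(f\geq -n\) after the shift. If \(D<-n(m+1)\), the space is simply zero.
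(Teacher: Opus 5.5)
Your argument is correct, and it takes a genuinely different, more direct route than the paper.

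\textbf{The paper's route.} It argues by contradiction through compactness. It passes to a pointed limit \(X_\infty\) via Corollary~\ref{cor:weak-compactness} and uses the same first-moment identity to get the tail bound \eqref{eq:lemma-bounded-charge-tail}. It then uses Lemma~\ref{lem:fixed-power} and elliptic bootstrapping to extract limits \(s_{\infty,a}\) on compact subsets of \(\mathcal R\). Finally, tightness and the measure-zero singular set give an infinite Gaussian-orthonormal family of charge at most \(D\) on \(X_\infty\), which the paper declares contradictory.

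\textbf{Your route.} You use the same two analytic inputs. The moment identity gives uniform concentration of Gaussian mass in \(B(p_i,R(n,m,D))\). The uniform mean-value estimate of Lemma~\ref{lem:fixed-power} bounds the Bergman density \(\sum_a|s^{(a)}|^2\) on that ball. You add the uniform volume bound of \cite{CaoZhou,HM} and simply count: \(N/2\le C e^{n}\operatorname{Vol}B(p_i,R)\).

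\textbf{What your route buys.} It gives an effective \(N(n,m,D)\). It avoids the singular-limit machinery: convergence on \(\mathcal R\), the codimension-four set, and identification of limit sections. It also proves that \(\mathcal H_{m,\leq D}(X)\) is finite-dimensional for each fixed \(X\), which the paper assumes when it picks an eigenbasis. Your count is also what makes the paper's last step rigorous. An infinite orthonormal family of charge \(\le D\) on \(X_\infty\) is contradictory precisely because the same concentration-plus-Bergman bound holds on the limit, where Lemma~\ref{lem:fixed-power} also applies. The alternative is algebraic finiteness on \(X_\infty\), which would require showing the limit sections are algebraic homogeneous sections.

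\textbf{What the paper's route buys.} It produces the limiting sections themselves, which is the natural input for Conjecture~\ref{conj:flat-convergence}.

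\textbf{Minor points.} The contradiction setup in your first paragraph is superfluous, as you note yourself. When you sum the moment identity over characters, the cross terms \(\int f\langle s_\alpha,s_\beta\rangle e^{-f}\omega^n\) vanish because \(f\) is torus-invariant and the first moments are finite. \(G\)-orthogonality of distinct characters and self-adjointness of \(A\) alone do not give this.
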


\begin{proof}
Suppose that the asserted uniform bound fails.  Then we could choose shrinkers with
\(\dim\mathcal H_{i,m,\leq D}\to\infty\).  Choose
\(p_i\in\operatorname*{argmin}_{X_i}f_i\).  By
Corollary~\ref{cor:weak-compactness}, pass to a limit as in
\eqref{eq:general-LLW-convergence}.  By 
\eqref{eq:uniform-potential-estimates}, we know that if
\(f_i+n\leq A\), then
\[
 d_i(p_i,\,\cdot\,)\leq5\sqrt2\,n+\sqrt{2A}.
\]

Pass to a further subsequence and relabel so that
\(\dim\mathcal H_{i,m,\leq D}\geq i\). Distinct character spaces are
orthogonal, and the self-adjoint charge operator admits a
Gaussian-orthonormal eigenbasis of the finite-dimensional space
\(\mathcal H_{i,m,\leq D}\).  Choose and order its first \(i\) vectors as
\[
 s_{i,1},\ldots,s_{i,i}\in\mathcal H_{i,m,\leq D},
 \qquad A_i s_{i,a}=\lambda_{i,a}s_{i,a},
 \qquad \lambda_{i,a}\leq D.
\]
For every unit vector in these ordered lists,
\eqref{eq:limit-section-first-moment} gives
\[
 \int_{X_i}(f_i+n)|s_{i,a}|_{h_i^m}^2e^{-f_i}
       \frac{\omega_i^n}{n!}
 =n+\frac{\lambda_{i,a}}{m+1}.
\]
Since \(f_i+n\geq0\), it follows that
\(
 -(m+1)n\leq\lambda_{i,a}\leq D.
\)
and
\begin{equation}\label{eq:lemma-bounded-charge-tail}
 \int_{\{f_i+n>A\}}|s_{i,a}|_{h_i^m}^2e^{-f_i}
       \frac{\omega_i^n}{n!}
 \leq\frac{n+|D|/(m+1)}{A}.
\end{equation}

For each fixed \(A\), equation
\eqref{eq:uniform-potential-estimates} places \(\{f_i+n\leq A\}\) in a
uniform ball \(B(p_i,R_A)\).  The upper bound for \(f_i\) on the slightly
larger ball and Gaussian unit normalization give the local unweighted bound
\[
 \int_{B(p_i,R_A+2)}|s_{i,a}|_{h_i^m}^2\omega_{m+1,i}^n
 \leq n!(m+1)^n
 e^{\sup_{B(p_i,R_A+2)}f_i}\|s_{i,a}\|_{G,i,m}^2
 \leq C_{A,m}.
\]
Consequently, Lemma~\ref{lem:fixed-power} and
elliptic bootstrappin on the regular-locus give, after further subsequences,
smooth convergence of every \(s_{i,a}\) on compact subsets of
\(\mathcal R\) to a holomorphic
section \(s_{\infty,a}\). 
By the uniform integral estimate and the singular set has measure zero, we know that the limit holomoprhic section $s_{i,a}$ has charge at most $D$. Then the diagonal construction gives an infinite orthonormal family
\(\{s_{\infty,a}\}\) with charge at most $D$, which gives a contradiction.
\end{proof}

\bibliographystyle{alpha}
\bibliography{ref.bib}
\end{document}